\newcommand{\subjclass}[2][1991]{%
  \let\@oldtitle\@title%
  \gdef\@title{\@oldtitle\footnotetext{#1 \emph{Mathematics subject classification.} #2}}%
}
\newcommand{\keywords}[1]{%
  \let\@@oldtitle\@title%
  \gdef\@title{\@@oldtitle\footnotetext{\emph{Key words and phrases.} #1.}}%
}
\theoremstyle{plain}
  \declaretheorem[numberwithin=section]{theorem}
  \declaretheorem[numberlike=theorem]{corollary}
  \declaretheorem[numberlike=theorem]{proposition}
  \declaretheorem[numberlike=theorem]{lemma}
\theoremstyle{definition}
  \declaretheorem[numberlike=theorem,qed=$\diamond$]{example}
  \declaretheorem[numberlike=theorem]{remark}
\newenvironment{acknowledgements}{\bigskip\textbf{Acknowledgements.}}{}
\newcommand{\op}[1]{\ensuremath{\operatorname{#1}}}
\newcommand{\sinc}{\op{sinc}}
\newcommand{\mathd}{\mathrm{d}}
\newcommand{\dueto}[1]{\textup{\textbf{(#1) }}}
\newcommand{\pFq}[5]{\ensuremath{{}_{#1}F_{#2} \left( \genfrac{}{}{0pt}{}{#3}{#4} \middle| {#5} \right)}}
\newcommand{\MeijerG}[5]{\ensuremath{G_{#1}^{#2} \left( \genfrac{}{}{0pt}{}{#3}{#4} \middle| {#5} \right)}}
\newcommand{\assign}{:=}
\renewcommand{\ge}{\geqslant}
\renewcommand{\le}{\leqslant}
\renewcommand{\geq}{\geqslant}
\renewcommand{\leq}{\leqslant}
\renewcommand{\Re}{\op{Re}\,}
\begin{document}

\title{Densities of short uniform random walks in higher dimensions}

\author[1]{Jonathan M. Borwein}
\author[2]{Armin Straub}
\author[3]{Christophe Vignat}
\affil[1]{University of Newcastle}
\affil[2]{University of Illinois at Urbana-Champaign}
\affil[3]{Tulane University}


\subjclass[2010]{Primary 33C20, 60G50; Secondary 05A19}

\keywords{short random walks, generalized hypergeometric functions, Bessel integrals, Narayana numbers}

\date{August 19, 2015}

\maketitle

\begin{abstract}
We study arithmetic properties of short uniform random
walks in arbitrary dimensions, with a focus on explicit (hypergeometric)
evaluations of the moment functions and probability densities in the case of
up to five steps. Somewhat to our surprise, we are able to provide complete
extensions to arbitrary dimensions for most of the central results known in the two-dimensional
case.
\end{abstract}

\section{Introduction}

An $n$-step uniform random walk in $\mathbb{R}^d$ starts at the origin and
consists of $n$ independent steps of length $1$, each of which is taken into a
uniformly random direction. In other words, each step corresponds to a random
vector uniformly distributed on the unit sphere. The study of such walks
originates with Pearson \cite{Pea05}, who was interested in planar walks,
which he looked at \cite{Pea06} as migrations of, for instance, mosquitos
moving a step after each breeding cycle. Random walks in three dimensions (known as
random flights) seem first to have been studied in extenso by Rayleigh
\cite{rayleigh-flights}, and higher dimensions are touched upon in
\cite[{\textsection}13.48]{watson-bessel}.

This paper is a companion to \cite{bnsw-rw,bsw-rw2} and
\cite{bswz-densities}, which studied the analytic and number theoretic
behaviour of short uniform random walks in the plane (five steps or less). In
this work we revisit the same issues in higher dimensions. Somewhat to our
surprise, we are able to provide complete extensions for most of the central
results in the culminating paper \cite{bswz-densities}.

Throughout the paper, $n$ and $d$ are the positive integers corresponding to
the number of steps and the dimension of the random walk we are considering.
Moreover, we denote with $\nu$ the half-integer
\begin{equation}
  \nu = \frac{d}{2} - 1. \label{eq:nu}
\end{equation}
It turns out that most results are more naturally expressed in terms of this
parameter $\nu$, and so we denote, for instance, with $p_n (\nu ; x)$ the
probability density function of the distance to the origin after $n$ random
unit steps in $\mathbb{R}^d$. In Section~\ref{ssec:den}, we mostly follow
the account in \cite{hughes-rw} and develop the basic Bessel integral
representations for these densities beginning with Theorems~\ref{thm:bi} and
\ref{thm:pn:besselD}, which are central to our analysis. In particular, a
brief discussion of the (elementary) case of odd dimensions is included in
Section \ref{ssec:odd}.

In Section \ref{ssec:mom}, we turn to general results on the associated moment
functions
\begin{equation}
  W_n (\nu ; s) = \int_0^{\infty} x^s p_n (\nu ; x) \mathd x. \label{eq:Wn}
\end{equation}
In particular, we derive in Theorem~\ref{thm:W:even} a formula for the even
moments $W_n (\nu ; 2 k)$ as a multiple sum over the product of multinomial
coefficients. As a consequence, we observe another interpretation of the
Catalan numbers as the even moments of the distance after two random steps in
four dimensions, and realize, more generally, in Example~\ref{eg:W:values4d}
the moments in four dimensions in terms of powers of the Narayana triangular
matrix. We shall see that dimensions two and four are privileged in that all
even moments are integral only in those two dimensions.

In Section~\ref{sec:mom}, we turn to a detailed analysis of the moments of
short step walks: from two-step walks ({\textsection}\ref{ssec:mom2}) through
five steps ({\textsection}\ref{ssec:mom5}). For instance, we show in
\eqref{eq:W2:gf}, Theorem~\ref{thm:W3:gf} and Example~\ref{eg:W4:ogf} that the
ordinary generating functions of the even moments for $n = 2, 3, 4$ can be
expressed in terms of hypergeometric functions. We are also able to give
closed forms for all odd moments for less than five steps.

In Section~\ref{sec:den}, we perform a corresponding analysis of the densities
$p_n (\nu ; x)$: from two-step walks ({\textsection}\ref{ssec:den2}) through
five steps ({\textsection}\ref{ssec:den5}). One especially striking result for
$n = 3$, shown in Corollary~\ref{cor:fe3}, is the following functional
equation for the probability density function $p_3 (\nu ; x)$. For $0
\leq x \leq 3$, and each half-integer $\nu \geq 0$, the
function $F (x) \assign p_3 (\nu ; x) / x$ satisfies the functional equation
\begin{equation}
  F (x) = \left( \frac{1 + x}{2} \right)^{6 \nu - 2} F \left( \frac{3 - x}{1 +
  x} \right) .
\end{equation}
Finally, in Section~\ref{sec:conc}, we make some concluding remarks and leave
several open questions.

As much as possible, we keep our notation consistent with that in
\cite{bnsw-rw,bsw-rw2}, and especially \cite{bswz-densities}, to which we
refer for details of how to exploit the Mellin transform and similar matters.
Random walks in higher dimensions are also briefly discussed in
\cite[Chapter~4]{wan-phd}. In particular, Wan gives evaluations in arbitrary
dimensions for the second moments, which we consider and generalize in
Example~\ref{eg:W:smallk}, as well as for two-step walks.

\section{Basic results from probability}\label{sec:basic}

\subsection{The probability densities}\label{ssec:den}

For the benefit of the reader, we briefly summarize the account given in
\cite[Chapter~2.2]{hughes-rw} of how to determine the probability density
function $p_n (\nu ; x)$ of an $n$-step random walk in $d$ dimensions. The
reader interested in further details and corresponding results for more
general random walks, for instance, with varying step sizes, is referred to
\cite{hughes-rw}.

Throughout the paper, the {\emph{normalized Bessel function of the first
kind}} is
\begin{equation}
  j_{\nu} ( x) = \nu ! \left( \frac{2}{x} \right)^{\nu} J_{\nu} ( x) = \nu !
  \sum_{m \geq 0} \frac{( - x^2 / 4)^m}{m! ( m + \nu) !} \label{eq:j} .
\end{equation}
With this normalization, we have $j_{\nu} (0) = 1$ and
\begin{equation}
  j_{\nu} (x) \sim \frac{\nu !}{\sqrt{\pi}} \left( \frac{2}{x} \right)^{\nu +
  1 / 2} \cos \left( x - \frac{\pi}{2}  \left( \nu + \frac{1}{2} \right)
  \right) \label{eq:j:infty}
\end{equation}
as $x \rightarrow \infty$ on the real line. Note also that $j_{1 / 2} (x) =
\operatorname{sinc} (x) = \sin (x) / x$, which in part explains why analysis in
3-space is so simple. More generally, all half-integer order $j_{\nu} (x)$ are
elementary.

\begin{theorem}
  {\dueto{Bessel integral for the densities, I}} \label{thm:bi}
  The probability density
  function of the distance to the origin in $d \geq 2$ dimensions after
  $n \geq 2$ steps is, for $x > 0$,
  \begin{equation}
    p_n (\nu ; x) = \frac{2^{- \nu}}{\nu !} \int_0^{\infty} (t x)^{\nu + 1}
    J_{\nu} (t x) j_{\nu}^n (t) \mathd t, \label{eq:pn:bessel}
  \end{equation}
  where, as introduced in \eqref{eq:nu}, $\nu = \frac{d}{2} - 1$.
\end{theorem}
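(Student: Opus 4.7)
The plan is to use characteristic functions together with Fourier inversion. Write $S_n = X_1 + \cdots + X_n$ for the position after $n$ steps, where the $X_i$ are i.i.d.\ uniform on the unit sphere $S^{d-1}$. The first step is to compute the characteristic function of a single step: by rotational symmetry $E[e^{i \xi \cdot X_i}]$ depends only on $r = |\xi|$, and reducing to spherical coordinates with $\xi = r e_1$ one finds
\begin{equation*}
  E[e^{i \xi \cdot X_i}] = \frac{\omega_{d-2}}{\omega_{d-1}} \int_0^\pi \cos(r \cos\theta) \sin^{d-2}\theta \, \mathd \theta,
\end{equation*}
and Poisson's integral representation of $J_\nu$ together with $\omega_{d-2}/\omega_{d-1} = \Gamma(\nu+1)/(\sqrt\pi\,\Gamma(\nu+1/2))$ identifies this as exactly $j_\nu(r)$ — the normalization in \eqref{eq:j} having been chosen precisely so that this comes out clean. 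By independence, the characteristic function of $S_n$ is $j_\nu^n(|\xi|)$.

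The second step is to invert the Fourier transform to recover the density $f_{S_n}$ on $\mathbb{R}^d$. Because the characteristic function is radial, the inversion collapses to a one-dimensional Hankel transform: the standard identity
\begin{equation*}
  (2\pi)^{-d} \int_{\mathbb{R}^d} e^{i \xi \cdot y} g(|\xi|) \mathd \xi = (2\pi)^{-d/2} |y|^{-\nu} \int_0^\infty g(t) J_\nu(|y| t) t^{\nu+1} \mathd t,
\end{equation*}
valid for radial $g$ in $d = 2\nu+2$ dimensions, applied to $g = j_\nu^n$ produces $f_{S_n}(y)$ as an integral of the same shape as the claimed formula.

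The final step is to convert the density of the vector $S_n$ into the density of its length: since $S_n$ is radially symmetric, $p_n(\nu;x) = \omega_{d-1} x^{d-1} f_{S_n}(x e_1)$, where $\omega_{d-1} = 2\pi^{d/2}/\Gamma(d/2)$ is the surface area of $S^{d-1}$. Substituting the Hankel expression and using $\nu = d/2 - 1$, so that $\Gamma(d/2) = \nu!$ and $d - 1 - \nu = \nu + 1$, every factor of $2\pi$ collapses and one obtains \eqref{eq:pn:bessel} directly. The main obstacle is not conceptual but the bookkeeping of constants through the chain characteristic function $\to$ Hankel inversion $\to$ radial-to-distance conversion; a secondary point is justifying convergence of $\int_0^\infty j_\nu^n(t) J_\nu(tx) t^{\nu+1} \mathd t$, which follows for $n \geq 2$ from the $t^{-\nu-1/2}$ decay of $j_\nu$ encoded in \eqref{eq:j:infty} (the oscillatory cancellation further improves matters so that the integral converges conditionally even for $n = 2$).
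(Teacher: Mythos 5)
Your proof is correct and follows essentially the same route as the paper: identify the single-step characteristic function as $j_\nu$, raise it to the $n$th power by independence, invert via the radial (Hankel) reduction of the Fourier transform, and convert the vector density to the distance density by the surface-area factor. The only cosmetic difference is that you derive $\hat\mu_{\boldsymbol X} = j_\nu(\|\boldsymbol q\|)$ explicitly from Poisson's integral representation, whereas the paper cites the general formula for Fourier transforms of orthogonally invariant functions; your closing remark on convergence is a welcome extra that the paper's proof omits.
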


\begin{proof}
  Let $\boldsymbol{X}$ be a random vector, which is uniformly distributed on the
  unit sphere in $\mathbb{R}^d$. That is, $\boldsymbol{X}$ describes the
  displacement of a single step in our random walk. Then the Fourier transform
  of its induced probability measure $\mu_{\boldsymbol{X}}= \frac{\Gamma\left(d/2\right)}{2 \pi ^ {d/2}} \delta\left(\Vert \boldsymbol{x} \Vert -1 \right)$ is
  \begin{equation*}
    \hat{\mu}_{\boldsymbol{X}} (\boldsymbol{q}) = \int_{\mathbb{R}^d} e^{i
     \langle \boldsymbol{x}, \boldsymbol{q} \rangle} \mathd \mu_{\boldsymbol{X}}
     (\boldsymbol{x}) = j_{\nu} (\|\boldsymbol{q}\|) .
  \end{equation*}
  This is a special case of the famous formula
  \cite[(2.30)]{hughes-rw},
  \begin{equation}
    \int_{\mathbb{R}^d} e^{i \langle \boldsymbol{x}, \boldsymbol{q} \rangle} f
    (\|\boldsymbol{x}\|) \mathd \boldsymbol{x}= 2 \pi^{d / 2} \int_0^{\infty}
    \left( \frac{2}{t q} \right)^{d / 2 - 1} J_{d / 2 - 1} (t q) t^{d - 1} f
    (t) \mathd t, \label{eq:ft:iso}
  \end{equation}
  with $q = \|\boldsymbol{q}\|$, for integrals of orthogonally invariant
  functions.

  Note that the position $\boldsymbol{Z}$ of a random walk after $n$ unit steps
  in $\mathbb{R}^d$ is distributed like the sum of $n$ independent copies of
  $\boldsymbol{X}$. The Fourier transform of $\mu_{\boldsymbol{Z}}$ therefore is
  \begin{equation}
    \hat{\mu}_{\boldsymbol{Z}} (\boldsymbol{q}) = j_{\nu}^n (\|\boldsymbol{q}\|).
    \label{eq:muz:hat}
  \end{equation}
  We are now able to obtain the probability density function $p_n (\nu ;
  \boldsymbol{x})$ of the position after $n$ unit steps in $\mathbb{R}^d$ via
  the inversion relation
  \begin{equation}
    p_n (\nu ; \boldsymbol{x}) = \frac{1}{(2 \pi)^d} \int_{\mathbb{R}^d} e^{- i
    \langle \boldsymbol{x}, \boldsymbol{q} \rangle} \hat{\mu}_{\boldsymbol{Z}}
    (\boldsymbol{q}) \mathd \boldsymbol{q}. \label{eq:pn:fi}
  \end{equation}
   Combining \eqref{eq:muz:hat}, \eqref{eq:pn:fi} and
  \eqref{eq:ft:iso}, we find
  \begin{equation*}
    p_n (\nu ; \boldsymbol{x}) = \frac{1}{(2 \pi)^{\nu + 1}} \int_0^{\infty}
     \frac{t^{\nu + 1}}{x^{\nu}} J_{\nu} (t x) j_{\nu}^n (t) \mathd t.
  \end{equation*}
  Since the surface area of the unit sphere in $\mathbb{R}^d$ is $2 \pi^{\nu
  + 1} / \nu !$, the density functions for the position and distance are
  related by
  \begin{equation}\label{eq:pn:posdis}
    p_n (\nu ; \boldsymbol{x}) = \frac{\nu !}{2 \pi^{\nu + 1}
     \|\boldsymbol{x}\|^{2 \nu + 1}} p_n (\nu ; \|\boldsymbol{x}\|),
  \end{equation}
  whence we arrive at the formula \eqref{eq:pn:bessel}.
\end{proof}

The probability densities $p_3 (\nu ; x)$ of the distance to the origin after
three random steps in dimensions $2, 3, \ldots, 9$ are depicted in
Figure~\ref{fig:p3}. With the exception of the planar case, which has a
logarithmic singularity at $x = 1$, these functions are at least continuous in
the interval $[0, 3]$, on which they are supported. Their precise regularity
is provided by Corollary~\ref{cor:pn:reg}.  For comparison, the probability
densities of four-step walks in dimensions $2, 3, \ldots, 9$ are plotted in
Figure~\ref{fig:p4}, and corresponding plots for five steps are provided by
Figure~\ref{fig:p5}.

\begin{figure}[h]
  \centering
  \includegraphics[width=9cm]{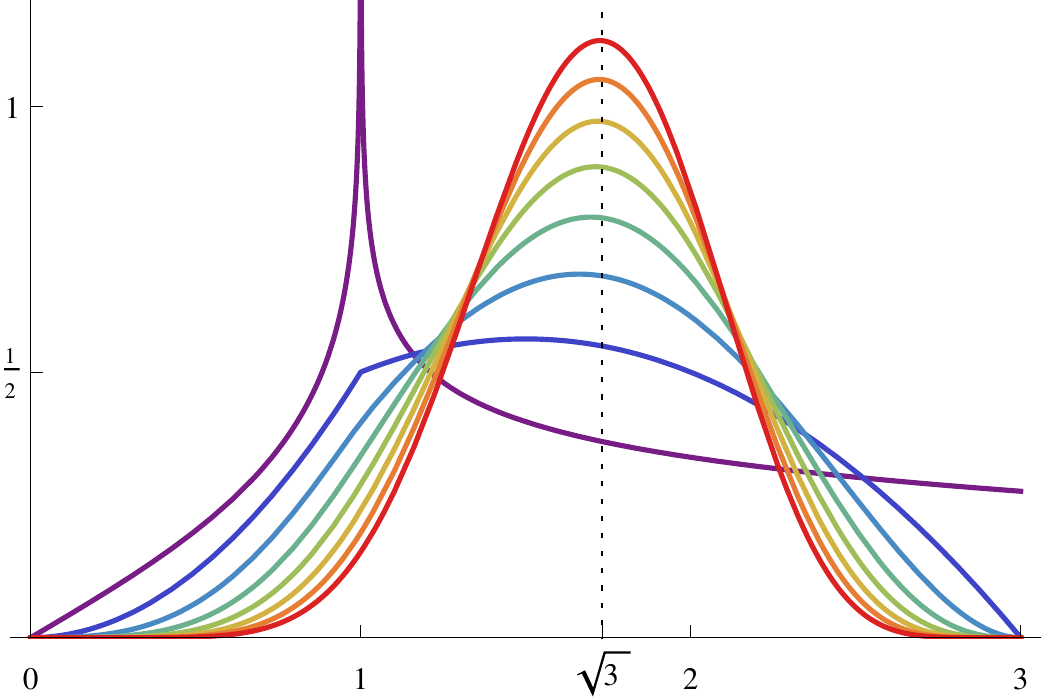}
  \caption{\label{fig:p3}$p_3 (\nu ; x)$ for $\nu = 0, \tfrac{1}{2}, 1,
  \ldots, \frac{7}{2}$}
\end{figure}

\begin{figure}[h]
  \centering
  \includegraphics[width=9cm]{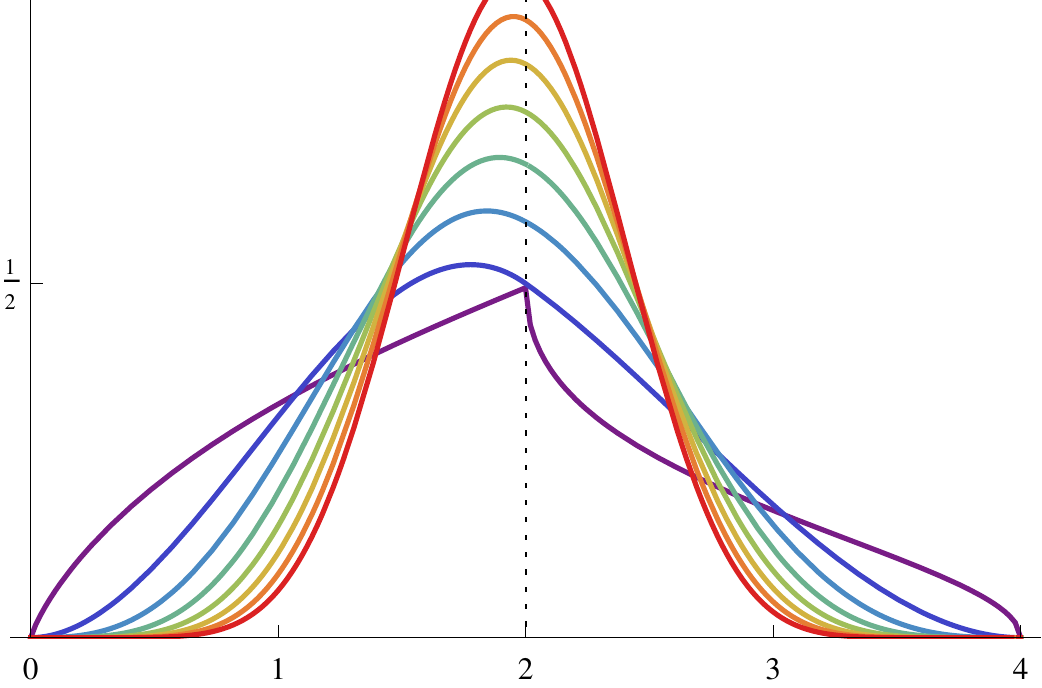}
  \caption{\label{fig:p4}$p_4 (\nu ; x)$ for $\nu = 0, \tfrac{1}{2}, 1,
  \ldots, \frac{7}{2}$}
\end{figure}

Observe how the density functions center and spike as the dimensions increase.
Indeed, this is a general phenomenon and the distributions described by the
densities $p_n (\nu ; x)$ approach a Dirac distribution centered at
$\sqrt{n}$. The intuition behind this observation is as follows: as the
dimension $d$ increases, the directions of each of the $n$ random steps
increasingly tend to be close to orthogonal to each other. That is, given an
incoming direction, the direction of the next step will probably belong to a
hyperplane which is almost orthogonal to this direction. Pythagoras' theorem
therefore predicts that the distance after $n$ steps is about $\sqrt{n}$. A
precise asymptotic result, which confirms this intuition, is given in
Example~\ref{eg:p:diminf}.

Integrating the Bessel integral representation \eqref{eq:pn:bessel} for the
probability density functions $p_n (\nu ; x)$, we obtain a corresponding
Bessel integral representation for the cumulative distribution functions,
\begin{equation}
  P_n (\nu ; x) = \int_0^x p_n (\nu ; y) \mathd y, \label{eq:Pn}
\end{equation}
of the distance to the origin after $n$ steps in $d$ dimensions.

\begin{corollary}\dueto{Cumulative distribution}
  Suppose $d \geq 2$ and $n \geq 2$. Then, for $x > 0$,
  \begin{equation}
    P_n (\nu ; x) = \frac{2^{- \nu}}{\nu !} \int_0^{\infty} (t x)^{\nu + 1}
    J_{\nu + 1} (t x) j_{\nu}^n (t) \frac{\mathd t}{t} . \label{eq:Pn:bessel}
  \end{equation}
\end{corollary}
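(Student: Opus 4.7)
The plan is to simply substitute the Bessel integral \eqref{eq:pn:bessel} into the definition \eqref{eq:Pn} of $P_n(\nu;x)$, swap the order of integration, and evaluate the inner $y$-integral in closed form using a standard Bessel identity.

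Concretely, I would begin from the well-known derivative formula $\frac{\mathd}{\mathd u}\bigl[u^{\nu+1} J_{\nu+1}(u)\bigr] = u^{\nu+1} J_\nu(u)$, which on integration from $0$ to $tx$ (and the substitution $u = ty$) gives
\begin{equation*}
  \int_0^x (ty)^{\nu+1} J_\nu(ty)\,\mathd y = \frac{(tx)^{\nu+1}}{t}\,J_{\nu+1}(tx).
\end{equation*}
Applying \eqref{eq:pn:bessel}, Fubini, and this identity in turn yields
\begin{equation*}
  P_n(\nu;x) = \frac{2^{-\nu}}{\nu!}\int_0^\infty j_\nu^n(t) \left[\int_0^x (ty)^{\nu+1} J_\nu(ty)\,\mathd y\right] \mathd t = \frac{2^{-\nu}}{\nu!}\int_0^\infty (tx)^{\nu+1} J_{\nu+1}(tx)\, j_\nu^n(t)\,\frac{\mathd t}{t},
\end{equation*}
which is \eqref{eq:Pn:bessel}.

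The one genuine subtlety is justifying the interchange of integrals, since the integrand in \eqref{eq:pn:bessel} is not absolutely integrable. I would handle this by first truncating the outer integral to $t \in [0,T]$, where Fubini applies trivially because the integrand is bounded on $[0,x]\times[0,T]$, and then using the asymptotic \eqref{eq:j:infty} to show that $j_\nu^n(t) = O(t^{-n(\nu+1/2)})$ as $t\to\infty$. Combined with the bound $|J_{\nu+1}(tx)| = O(t^{-1/2})$, the tail of the post-swap integral converges for $n\geq 2$ and $\nu\geq 0$, and a dominated convergence argument lets us send $T\to\infty$. Equivalently, one may appeal to Abel summation or regularization by a factor $e^{-\varepsilon t}$ and then pass $\varepsilon\downarrow 0$, exactly as done in \cite{hughes-rw}. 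Apart from this analytic justification, which is routine given the Bessel asymptotics already recorded in \eqref{eq:j:infty}, every step is a direct computation, so there is no real conceptual obstacle.
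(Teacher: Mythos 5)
Your proposal is correct and is exactly the argument the paper intends: the paper offers no explicit proof, merely noting that \eqref{eq:Pn:bessel} follows by ``integrating the Bessel integral representation \eqref{eq:pn:bessel}'', and your computation via $\tfrac{\mathd}{\mathd u}\bigl[u^{\nu+1}J_{\nu+1}(u)\bigr]=u^{\nu+1}J_{\nu}(u)$ (the case $\nu\mapsto\nu+1$ of \eqref{eq:J:D}) fills in precisely that step. Your attention to the interchange of integrals, with the observation that the post-swap integrand is $O\bigl(t^{(1-n)(\nu+1/2)-1}\bigr)$ and hence absolutely integrable for $n\geq 2$, is a sound and welcome addition to what the paper leaves implicit.
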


\begin{example}\dueto{Kluyver's Theorem}
  A famous result of Kluyver \cite{Klu06} asserts that, for $n\geq2$,
  \begin{equation*}
    P_n (0 ; 1) = \frac{1}{n + 1} .
  \end{equation*}
  That is, after $n$ unit steps in the plane, the probability to be within one
  unit of the starting point is $1 / (n + 1)$. This is nearly immediate from \eqref{eq:Pn:bessel}.
  An elementary proof of this remarkable result was given recently by Bernardi
  \cite{bernardi-rw}.

  It is natural to wonder whether there exists an extension of this result to
  higher dimensions. Clearly, these probabilities decrease as the dimension
  increases.
  \begin{enumerate}
    \item In the case of two steps, we have
    \begin{equation*}
      P_2 (\nu ; x) = \frac{x^{2 \nu + 1}}{2 \sqrt{\pi}}  \frac{\Gamma (\nu +
       1)}{\Gamma (\nu + 3 / 2)} \pFq21{\tfrac{1}{2} + \nu, \tfrac{1}{2} - \nu}{\tfrac{3}{2} + \nu}{\frac{x^2}{4}},
    \end{equation*}
    which, in the case of integers $\nu \geq 0$ and $x = 1$, reduces to
    \begin{equation}
      P_2 (\nu ; 1) = \frac{1}{3} - \frac{\sqrt{3}}{4 \pi} \sum_{k = 0}^{\nu-1} \frac{3^k} {(2k+1){{2k }\choose k}} . \label{eq:P2:1}
    \end{equation}
    Alternatively,
     \begin{equation}
      P_2 (\nu ; 1) = \frac{1}{3} - \frac{1}{4 \pi} \sum_{k = 1}^{\nu} 3^{k -
      1 / 2} \frac{\Gamma (k)^2}{\Gamma (2 k)} . \label{eq:P2:1b}
    \end{equation}
    In particular, in dimensions $4$, $6$ and $8$,
    \begin{equation*}
      P_2 (1 ; 1) = \frac{1}{3} - \frac{\sqrt{3}}{4 \pi}, \hspace{1em} P_2 (2
       ; 1) = \frac{1}{3} - \frac{3 \sqrt{3}}{8 \pi}, \hspace{1em} P_2 (3 ; 1)
       = \frac{1}{3} - \frac{9 \sqrt{3}}{20 \pi}, \hspace{1em} \ldots,
    \end{equation*}
    and it is obvious from \eqref{eq:P2:1} that all the probabilities $P_2
    (\nu ; 1)$ are of the form $\frac{1}{3} - c_{\nu} \frac{\sqrt{3}}{\pi}$
    for some rational factor $c_{\nu} > 0$.

    \item In the case of three steps, we find
    \begin{equation*}
      P_3 (1 ; 1) = \frac{1}{4} - \frac{4}{3 \pi^2}, \hspace{1em} P_3 (2 ; 1)
       = \frac{1}{4} - \frac{256}{135 \pi^2}, \hspace{1em} P_3 (3 ; 1) =
       \frac{1}{4} - \frac{2048}{945 \pi^2}, \hspace{1em} \ldots
    \end{equation*}
    Indeed, for integers $\nu \geq 0$, we have the general formula
    \begin{equation}
      P_3 (\nu ; 1) = \frac{1}{4} - \frac{1}{3 \pi^2} \sum_{k = 1}^{\nu}
      2^{6 (k-1)} (11 k - 3) \frac{\Gamma^5 (k)}{\Gamma (2 k) \Gamma (3 k)} .
      \label{eq:P3:1}
    \end{equation}
    In the limit we arrive at the improbable evaluation
    \begin{equation}
      \pFq54{\tfrac{19}{11}, 1, 1, 1, 1}{\tfrac{8}{11}, \tfrac{4}{3}, \tfrac{3}{2}, \tfrac{5}{3}}{\frac{16}{27}} = \frac{3}{16} \pi^2,
    \end{equation}
    since $P_3 (\nu ; 1)$ goes to zero as the dimension goes to infinity.
  \end{enumerate}
  The case of $n$-step walks, with $n \geq 4$, is much less accessible
  \cite[\S5]{b3g}, and it would be interesting to obtain a more
  complete extension of Kluyver's result to higher dimensions.
\end{example}

\begin{example}
  \label{eg:p:diminf}Asymptotically,  for $x>0$ as $\nu \rightarrow \infty$,
  \begin{equation*}
    j_{\nu} ( t) \sim \exp \left( - \frac{t^2}{4 \nu + 2} \right) .
  \end{equation*}
  It may thus be derived from the Bessel integral representation
  \eqref{eq:pn:bessel} that asymptotically, as the dimension goes to infinity,
  $p_n ( \nu ; x) \sim q_n ( \nu ; x)$, where
  \begin{equation*}
    q_n ( \nu ; x) = \frac{2^{- \nu}}{\nu !} \left( \frac{2 \nu + 1}{n}
     \right)^{\nu + 1} x^{2 \nu + 1} \exp \left( - \frac{2 \nu + 1}{2 n} x^2
     \right)
  \end{equation*}
  for $x \geq 0$ and $q_n ( \nu ; x) = 0$ for $x < 0$. The probability
  density $q_n ( \nu ; x)$ describes a scaled chi distribution with $2 \nu + 2
  = d$ degrees of freedom. Its average is
  \begin{equation*}
    \int_0^{\infty} x q_n ( \nu ; x) \mathd x = \sqrt{\frac{2 n}{2 \nu + 1}}
     \frac{\Gamma \left( \nu + \tfrac{3}{2} \right)}{\Gamma ( \nu + 1)},
  \end{equation*}
  which converges to $\sqrt{n}$ as $\nu \rightarrow \infty$. More generally,
  for the $s$th moment, with $s > - 1$,
  \begin{eqnarray*}
    \int_0^{\infty} x^s q_n ( \nu ; x) \mathd x & = & \left( \frac{2 n}{2 \nu
    + 1} \right)^{s / 2}  \frac{\Gamma \left( \nu + \tfrac{s}{2} + 1
    \right)}{\Gamma ( \nu + 1)}\\
    & \sim & \left( \frac{2 n \nu}{2 \nu + 1} \right)^{s / 2} \left( 1 +
    \frac{s ( s + 2)}{8 \nu} + O ( \nu^{- 2}) \right),
  \end{eqnarray*}
  as $\nu \rightarrow \infty$, and it is straightforward to compute further
  terms of this asymptotic expansion. The fact that the $s$th moment
  approaches $n^{s / 2}$ for large dimensions, of course, reflects the
  observation from Figures~\ref{fig:p3}, \ref{fig:p4} and \ref{fig:p5} that the
  probability densities $p_n ( \nu ; x)$ center and spike at approximately $\sqrt{n}$.
\end{example}

These Bessel integral representations also allow us to deduce the regularity
of the density functions.

\begin{corollary}\dueto{Regularity of the density}
  \label{cor:pn:reg}The density $p_n (\nu ; x)$ is $m$ times continuously
  differentiable for $x > 0$ if
  \begin{equation*}
    n > \frac{m + 1}{\nu + 1 / 2} + 1, \hspace{1em} \text{or, equivalently,}
     \hspace{1em} m < (n - 1) (\nu + 1 / 2) - 1.
  \end{equation*}
\end{corollary}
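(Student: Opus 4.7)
My plan is to differentiate the Bessel integral representation \eqref{eq:pn:bessel} $m$ times under the integral sign and justify the exchange by dominated convergence. Writing $g(u) = u^{\nu+1} J_\nu(u)$, the integrand reads $g(tx)\, j_\nu^n(t)$, so formally
\[
\frac{\partial^m}{\partial x^m}\bigl[g(tx)\, j_\nu^n(t)\bigr] = t^m g^{(m)}(tx)\, j_\nu^n(t).
\]
The task reduces to dominating $|t^m g^{(m)}(tx)\, j_\nu^n(t)|$ by an integrable function of $t$, uniformly for $x$ in compact subsets of $(0,\infty)$; standard dominated-convergence arguments then yield both the differentiability and the continuity of the resulting $m$-th derivative.

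Two regimes must be controlled. Near $t = 0$, the series \eqref{eq:j} shows that $g$ is smooth with $g^{(m)}$ bounded on bounded intervals, and $j_\nu^n$ is also bounded there, so the integrand is at worst $O(t^m)$, trivially integrable near the origin for any $m$. The regime $t \to \infty$ is where the hypothesis comes in. From \eqref{eq:j:infty} one gets $j_\nu^n(t) = O(t^{-n(\nu+1/2)})$, while from the standard asymptotic $J_\nu(u) \sim \sqrt{2/(\pi u)}\,\cos\bigl(u - \tfrac{\pi}{2}(\nu+\tfrac12)\bigr)$ one deduces $g^{(m)}(u) = O(u^{\nu + 1/2})$ for every $m$: each differentiation preferentially hits the bounded oscillatory factor rather than the polynomial prefactor $u^{\nu+1/2}$. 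Hence the integrand is $O\bigl(t^{m + \nu + 1/2 - n(\nu + 1/2)}\bigr)$ at infinity, and integrability there is equivalent to $m + \nu + 3/2 < n(\nu+1/2)$, i.e.\ $m < (n-1)(\nu + 1/2) - 1$---exactly the stated condition.

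The one step that warrants genuine care is the uniform-in-$m$ bound $g^{(m)}(u) = O(u^{\nu+1/2})$. I would prove it by induction via the Bessel recurrences $\tfrac{d}{du}[u^\nu J_\nu(u)] = u^\nu J_{\nu-1}(u)$ and $\tfrac{d}{du}[u^{-\nu} J_\nu(u)] = -u^{-\nu} J_{\nu+1}(u)$, which express $g^{(m)}(u)$ as a finite linear combination of terms $u^{a} J_{b}(u)$ with $a \leq \nu+1$; since $|J_b(u)| = O(u^{-1/2})$ for each fixed $b$, each summand is $O(u^{\nu+1/2})$. Once this estimate is in hand, a first application of dominated convergence validates differentiation under the integral and a second gives continuity of $\partial_x^m p_n(\nu;x)$ in $x > 0$. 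Everything beyond the asymptotic of $g^{(m)}$ is routine bookkeeping with the two regime bounds.
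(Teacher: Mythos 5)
Your proposal is correct and follows essentially the same route as the paper: the paper rewrites \eqref{eq:pn:bessel} as \eqref{eq:pn:besselj} in terms of $j_\nu$ and notes that each differentiation under the integral sign costs one power of $t$ in the decay rate $O(t^{2\nu+1-(n+1)(\nu+1/2)})$ at infinity, which is exactly your exponent $O(t^{\nu+1/2-n(\nu+1/2)})$ for $g(tx)j_\nu^n(t)$. Your induction via the Bessel recurrences to establish $g^{(m)}(u)=O(u^{\nu+1/2})$ simply makes explicit the step the paper leaves as ``repeatedly differentiating under the integral sign as long as is permitted.''
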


\begin{proof}
  It follows from \eqref{eq:pn:bessel}, that
  \begin{equation}
    p_n (\nu ; x) = \frac{2^{- 2 \nu}}{\nu !^2} \int_0^{\infty} (t x)^{2 \nu +
    1} j_{\nu} (t x) j_{\nu}^n (t) \mathd t. \label{eq:pn:besselj}
  \end{equation}
  Observe that this integral converges absolutely if $2 \nu + 1 - (n + 1) (\nu
  + 1 / 2) < - 1$, in which case $p_n (\nu ; x)$ is continuous. Repeatedly
  differentiating under the integral sign as long as is permitted, we conclude
  that $p_n (\nu ; x)$ is $m$ times continuously differentiable if
  \begin{equation*}
    2 \nu + 1 - (n + 1) (\nu + 1 / 2) < - 1 - m,
  \end{equation*}
  and it only remains to solve for $n$, respectively $m$.
\end{proof}

\begin{example}
  In the case $d = 2$, this implies that $p_n (0 ; x)$ is in $C^0$ for $n >
  3$, in $C^1$ for $n > 5$, in $C^2$ for $n > 7$, and so on. Indeed, $p_3 (0 ;
  x)$ has a logarithmic singularity at $x = 1$, and $p_4 (0 ; x)$ as well as
  $p_5 (0 ; x)$ are not differentiable at $x = 2, 4$ and $x = 1, 3, 5$,
  respectively. \end{example}

\begin{corollary}
  Let $n \geq 4$ and $d \geq 2$ such that $(n, d) \neq (4, 2)$.
  Then,
  \begin{equation}
    \frac{1}{(2 \nu + 1) !} p_n^{(2 \nu + 1)} (\nu ; 0) = p_{n - 1} (\nu ; 1),
    \label{eq:pn:diffrel0}
  \end{equation}
  where the derivative is understood to be taken from the right.
\end{corollary}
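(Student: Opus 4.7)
The plan is to identify both sides with a common absolutely convergent integral. The only real work is algebraic; the hypotheses $n \geq 4$ and $(n,d) \neq (4,2)$ arise solely as the range in which that integral converges.

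For the right-hand side, I would evaluate $p_{n-1}(\nu; 1)$ using the basic Bessel representation \eqref{eq:pn:bessel} at $x = 1$ with $n$ replaced by $n-1$, and then convert $J_\nu$ to $j_\nu$ via $J_\nu(t) = (t/2)^\nu j_\nu(t)/\nu!$ (read off from \eqref{eq:j}). This turns the integrand $t^{\nu+1} J_\nu(t) j_\nu^{n-1}(t)$ into $t^{2\nu+1} j_\nu^n(t) / (2^\nu \nu!)$, yielding
\[
p_{n-1}(\nu; 1) = \frac{1}{2^{2\nu}(\nu!)^2} \int_0^\infty t^{2\nu+1} j_\nu^n(t)\,\mathd t.
\]

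For the left-hand side, I would use the symmetric form \eqref{eq:pn:besselj},
\[
p_n(\nu; x) = \frac{x^{2\nu+1}}{2^{2\nu}(\nu!)^2} \int_0^\infty t^{2\nu+1} j_\nu(tx) j_\nu^n(t)\,\mathd t.
\]
Since $|j_\nu(tx)| \leq 1$ for $\nu \geq 0$ and $j_\nu(tx) \to 1$ pointwise as $x \to 0^+$, dominated convergence with majorant $t^{2\nu+1}|j_\nu^n(t)|$ gives $p_n(\nu; x) = c\, x^{2\nu+1} + o(x^{2\nu+1})$, where $c$ is the same constant obtained above. Writing $p_n(\nu; x) = x^{2\nu+1} G(x)$ with $G$ continuous at $0^+$ and $G(0^+) = c$, the Leibniz rule at $x = 0$ forces $p_n^{(k)}(\nu; 0) = 0$ for $0 \leq k \leq 2\nu$ and $p_n^{(2\nu+1)}(\nu; 0) = (2\nu+1)!\, c$, matching $p_{n-1}(\nu; 1)$ as claimed.

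The main obstacle is pinning down the convergence threshold. By the asymptotic \eqref{eq:j:infty}, $j_\nu^n(t) \sim C\,t^{-n(\nu+1/2)}$ as $t \to \infty$, so the integral $\int_0^\infty t^{2\nu+1} j_\nu^n(t)\,\mathd t$ converges absolutely iff $n > (4\nu+4)/(2\nu+1)$. Examining the cases: $\nu = 0$ needs $n > 4$; $\nu = 1/2$ needs $n > 3$; and $\nu \geq 1$ needs only $n > 8/3$. Intersecting with $n \geq 4$ excludes exactly $(n,\nu) = (4,0)$, i.e., $(n,d) = (4,2)$, precisely the stated exception. A subsidiary technicality is that Corollary~\ref{cor:pn:reg} only supplies regularity on $(0,\infty)$, so the $(2\nu+1)$-th derivative at $0$ must be interpreted via the factorization $p_n(\nu; x) = x^{2\nu+1} G(x)$, consistent with the parenthetical remark that ``the derivative is understood to be taken from the right.''
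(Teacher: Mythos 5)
Your argument is correct and essentially the paper's own: the authors likewise start from the symmetric form \eqref{eq:pn:besselj}, take $2\nu+1$ derivatives at $x=0$ (you do this via the factorization $p_n(\nu;x)=x^{2\nu+1}G(x)$ rather than by differentiating under the integral sign, which is only a cosmetic difference), and compare with $p_{n-1}(\nu;1)=\frac{2^{-2\nu}}{\nu!^2}\int_0^\infty t^{2\nu+1}j_\nu^n(t)\,\mathd t$. Your convergence analysis pinning the excluded case to exactly $(n,d)=(4,2)$ is a welcome detail that the paper leaves implicit.
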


\begin{proof}
  Starting with \eqref{eq:pn:besselj}, we differentiate $2 \nu + 1$ many
  times, and compare with
  \begin{equation*}
    p_{n - 1} (\nu ; 1) = \frac{2^{- 2 \nu}}{\nu !^2} \int_0^{\infty} t^{2
     \nu + 1} j_{\nu}^n (t) \mathd t.
  \end{equation*}

\end{proof}

In the case $d = 2$, this reduces to $p_n' (0 ; 0) = p_{n - 1} (0 ; 1)$, which
was crucial in \cite{bswz-densities} for explicitly evaluating $p_5' (0 ;
0)$.

\begin{example}
  As long as $p_n (\nu ; x)$ is sufficiently differentiable at $x = 1$, we can
  further relate the value $p_n (\nu ; 1)$, occuring in
  \eqref{eq:pn:diffrel0}, to corresponding values of derivatives. Let us
  illustrate this by showing that
  \begin{equation}
    p_n' (\nu ; 1) = \frac{2 n \nu + n - 1}{n + 1} p_n (\nu ; 1)
    \label{eq:pn:1:d}
  \end{equation}
  for all $n \geq 3$, $\nu > 0$ such that $(n, \nu) \neq (3, 1 / 2)$. In
  that case, we may, as in Corollary~\ref{cor:pn:reg}, differentiate
  \eqref{eq:pn:besselj} under the integral sign to obtain
  \begin{equation*}
    p_n' (\nu ; 1) = (2 \nu + 1) p_n (\nu ; 1) + \frac{2^{- 2 \nu}}{\nu !^2}
     \int_0^{\infty} t^{2 \nu + 2} j_{\nu}' (t) j_{\nu}^n (t) \mathd t
  \end{equation*}
  On the other hand, integrating \eqref{eq:pn:besselj} by parts, we find
  \begin{equation*}
    p_n (\nu ; 1) = - \frac{n + 1}{2 \nu + 2} \frac{2^{- 2 \nu}}{\nu !^2}
     \int_0^{\infty} t^{2 \nu + 2} j_{\nu}^n (t) j_{\nu}' (t) \mathd t.
  \end{equation*}
  Combining these, we arrive at \eqref{eq:pn:1:d}.
\end{example}

The densities of an $n$-step walk can be related to the densities of an $(n -
1)$-step walk by the following generalization of \cite[(2.70)]{hughes-rw}.

\begin{theorem}\dueto{Recursion for the density}
  \label{thm:pn:rec}For $x > 0$ and $n = 1, 2, \ldots$, the function
  \begin{equation*}
    \psi_n (\nu ; x) = \frac{\nu !}{2 \pi^{\nu + 1}}  \frac{p_n (\nu ;
     x)}{x^{2 \nu + 1}}
  \end{equation*}
  satisfies
  \begin{equation}
    \psi_n (\nu ; x) = \frac{\nu !^2 2^{2 \nu}}{(2 \nu) ! \pi} \int_{- 1}^1
    \psi_{n - 1} (\nu ; \sqrt{1 + 2 \lambda x + x^2}) (1 - \lambda^2)^{\nu - 1
    / 2} \mathd \lambda . \label{eq:pn:rec}
  \end{equation}
\end{theorem}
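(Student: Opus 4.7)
The plan is to prove this recursion probabilistically, by conditioning on the last step of the random walk. To begin, I would observe that by \eqref{eq:pn:posdis} and the rotational symmetry of the walk, $\psi_n(\nu;x)$ coincides with the probability density $p_n(\nu;\boldsymbol{z})$ of the \emph{position vector} $\boldsymbol{Z}_n\in\mathbb{R}^d$ after $n$ unit steps, evaluated at any point $\boldsymbol{z}$ of Euclidean norm $x$. (This is the whole reason for introducing $\psi_n$ in the statement.)

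Writing $\boldsymbol{Z}_n = \boldsymbol{Z}_{n-1} + \boldsymbol{X}_n$, where $\boldsymbol{X}_n$ is the independent uniform step on $S^{d-1}$, conditioning on $\boldsymbol{X}_n$ gives the convolution identity
\begin{equation*}
\psi_n(\nu;\|\boldsymbol{z}\|) \;=\; \int_{S^{d-1}} \psi_{n-1}\bigl(\nu;\|\boldsymbol{z}-\boldsymbol{u}\|\bigr)\, d\mu_{\boldsymbol{X}}(\boldsymbol{u}),
\end{equation*}
with $\mu_{\boldsymbol{X}}$ the normalized surface measure on $S^{d-1}$. Next, I parametrize $\boldsymbol{u}$ by the angle $\theta$ it makes with $\boldsymbol{z}$, disintegrating $\mu_{\boldsymbol{X}}$ as $(|S^{d-2}|/|S^{d-1}|)\sin^{d-2}\theta\, d\theta$ on $[0,\pi]$ times the uniform distribution on the orthogonal $(d-2)$-sphere (which contributes nothing because the integrand depends only on $\theta$). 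Since $\langle\boldsymbol{z},\boldsymbol{u}\rangle = x\cos\theta$, one has $\|\boldsymbol{z}-\boldsymbol{u}\|^2 = 1 - 2\lambda x + x^2$ for $\lambda = \cos\theta$, and setting $\lambda \mapsto -\lambda$ (which preserves the measure $(1-\lambda^2)^{\nu-1/2}\,d\lambda$ because $\sin^{d-2}\theta\,d\theta$ is symmetric under $\theta\mapsto \pi-\theta$) puts the integrand into the desired form $\psi_{n-1}(\nu;\sqrt{1+2\lambda x+x^2})(1-\lambda^2)^{\nu-1/2}$.

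The final step is to identify the normalizing constant. From $|S^{d-1}| = 2\pi^{\nu+1}/\nu!$ and $|S^{d-2}| = 2\pi^{\nu+1/2}/\Gamma(\nu+1/2)$ one gets
\begin{equation*}
\frac{|S^{d-2}|}{|S^{d-1}|} \;=\; \frac{\nu!}{\sqrt{\pi}\,\Gamma(\nu+1/2)},
\end{equation*}
and applying the Legendre duplication formula $\Gamma(\nu+1/2)\,\nu! = 2^{-2\nu}\sqrt{\pi}\,(2\nu)!$ rewrites this ratio as $\nu!^2\,2^{2\nu}/\bigl(\pi\,(2\nu)!\bigr)$, exactly the prefactor in \eqref{eq:pn:rec}. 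This completes the argument for $n\geq 2$; the case $n=1$ is to be read distributionally (one checks it by testing against smooth functions, using $\psi_0=\delta_0$, or equivalently by direct substitution showing that the right-hand side is supported on $x=1$). The only mildly subtle point I anticipate is the duplication-formula bookkeeping to recognize the prefactor; the probabilistic core of the proof is entirely routine conditioning, and this is essentially the higher-dimensional analogue of the argument sketched after equation~(2.70) of \cite{hughes-rw}.
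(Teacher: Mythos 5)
Your proposal is correct and follows essentially the same route as the paper: identify $\psi_n$ with the density of the position vector via \eqref{eq:pn:posdis}, condition on the last step to get the spherical convolution, and reduce to the one-dimensional integral by polar coordinates in the angle $\theta$ with $\lambda=\cos\theta$. The paper leaves the spherical-coordinate computation and the constant $|S^{d-2}|/|S^{d-1}|=\nu!^2 2^{2\nu}/(\pi\,(2\nu)!)$ to the reader, whereas you carry them out explicitly (correctly, including the duplication-formula step), so there is nothing substantive to add.
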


\begin{proof}
  Recall from \eqref{eq:pn:posdis} that the probability density of the
  position after $n$ steps in $\mathbb{R}^d$ is
  \begin{equation*}
    p_n (\nu ; \boldsymbol{x}) = \psi_n (\nu ; \|\boldsymbol{x}\|) .
  \end{equation*}
  Since the steps of our walks are uniformly distributed vectors on the unit
  sphere in $\mathbb{R}^d$, we have
  \begin{equation*}
    p_n (\nu ; \boldsymbol{x}) = \int_{\|\boldsymbol{y}\| = 1} p_{n - 1} (\nu ;
     \boldsymbol{x}-\boldsymbol{y}) \mathd S = \int_{\|\boldsymbol{y}\| = 1} \psi_{n
     - 1} (\nu ; \|\boldsymbol{x}-\boldsymbol{y}\|) \mathd S,
  \end{equation*}
  where $\mathd S$ denotes the normalized surface measure of the unit sphere.
  After introducing $d$-dimensional spherical polar coordinates, as detailed,
  for instance, in \cite[p.~61]{hughes-rw}, and a straightforward change of
  variables we arrive at \eqref{eq:pn:rec}.
\end{proof}

Finally, a computationally more accessible expression for the densities $p_n
(\nu ; x)$ is given by the following generalization of a formula, which was
derived by Broadhurst \cite{broadhurst-rw} in the case of two dimensions,
that is, $\nu = 0$. Note that \eqref{eq:pn:bessel} is the special case $k = 0$
in \eqref{eq:pn:besselD}.

\begin{theorem} {\dueto{Bessel integral for the densities, II}}
\label{thm:pn:besselD}Let $n
  \geq 2$ and $d \geq 2$. For any nonnegative integer $k$, and $x >
  0$,
  \begin{equation}
    p_n (\nu ; x) = \frac{2^{- \nu}}{\nu !}  \frac{1}{x^{2 k}} \int_0^{\infty}
    (t x)^{\nu + k + 1} J_{\nu + k} (t x) \left( - \frac{1}{t}
    \frac{\mathd}{\mathd t} \right)^k j_{\nu}^n (t) \mathd t.
    \label{eq:pn:besselD}
  \end{equation}
\end{theorem}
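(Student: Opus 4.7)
The plan is to prove the formula by induction on $k$, with base case $k=0$ being exactly Theorem~\ref{thm:bi}. For the inductive step, the key tool is the classical Bessel derivative identity
\begin{equation*}
  \frac{\mathd}{\mathd z}\bigl[z^{\mu+1} J_{\mu+1}(z)\bigr] = z^{\mu+1} J_\mu(z),
\end{equation*}
applied with $\mu = \nu+k$ and $z = tx$, which in the variable $t$ takes the form
\begin{equation*}
  (tx)^{\nu+k+1} J_{\nu+k}(tx) = \frac{1}{x} \frac{\mathd}{\mathd t}\bigl[(tx)^{\nu+k+1} J_{\nu+k+1}(tx)\bigr].
\end{equation*}

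Assume the identity \eqref{eq:pn:besselD} holds at level $k$, and write $g_k(t) = \bigl(-\tfrac{1}{t}\tfrac{\mathd}{\mathd t}\bigr)^k j_\nu^n(t)$. Substituting the displayed identity above into the integrand and integrating by parts on $[0,\infty)$, the boundary terms will vanish (see next paragraph), and we obtain
\begin{equation*}
  \int_0^\infty (tx)^{\nu+k+1} J_{\nu+k}(tx)\, g_k(t)\, \mathd t
    = -\frac{1}{x} \int_0^\infty (tx)^{\nu+k+1} J_{\nu+k+1}(tx)\, g_k'(t)\, \mathd t.
\end{equation*}
By the very definition of the operator, $g_k'(t) = -t\, g_{k+1}(t)$, so moving a factor of $t$ into $(tx)^{\nu+k+1}$ yields $(tx)^{\nu+k+2}$ together with an extra $1/x$, giving
\begin{equation*}
  \frac{1}{x^{2k}}\int_0^\infty (tx)^{\nu+k+1} J_{\nu+k}(tx)\, g_k(t)\, \mathd t
    = \frac{1}{x^{2(k+1)}}\int_0^\infty (tx)^{\nu+k+2} J_{\nu+k+1}(tx)\, g_{k+1}(t)\, \mathd t,
\end{equation*}
which is exactly the claim at level $k+1$. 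Multiplied by $2^{-\nu}/\nu!$, this advances the induction.

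The main technical point, and what I expect to be the real obstacle, is verifying that the boundary terms $(tx)^{\nu+k+1} J_{\nu+k+1}(tx)\, g_k(t)$ vanish at $t=0$ and $t=\infty$. At $t=0$ this is easy: $J_{\nu+k+1}(tx) \sim (tx)^{\nu+k+1}/(2^{\nu+k+1}(\nu+k+1)!)$, which forces a zero of order $2(\nu+k+1)$, while $g_k$ is real-analytic at $0$ since $j_\nu^n$ is (note that $\bigl(-\tfrac{1}{t}\tfrac{\mathd}{\mathd t}\bigr)$ preserves analyticity of even functions). At infinity one uses \eqref{eq:j:infty} to see that $j_\nu^n(t) = O(t^{-n(\nu+1/2)})$, and each application of $-\tfrac{1}{t}\tfrac{\mathd}{\mathd t}$ only improves the decay, so combined with the $O(t^{\nu+k+1/2})$ growth of the Bessel factor the boundary term decays for $n$ sufficiently large; for smaller $n$ one has to interpret the integrals in an oscillatory (improper Riemann) sense, exactly as for $k=0$, and the integration by parts remains valid as a limit, since all integrals in \eqref{eq:pn:besselD} are oscillatory integrals of the same character. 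Once this bookkeeping is in place, the induction closes and \eqref{eq:pn:besselD} follows.
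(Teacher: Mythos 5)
Your proposal is correct and follows essentially the same route as the paper's own proof: induction on $k$ starting from Theorem~\ref{thm:bi}, the derivative identity \eqref{eq:J:D} to rewrite the Bessel kernel as an exact derivative, integration by parts using $g_k'(t)=-t\,g_{k+1}(t)$, and a check that the boundary terms vanish at $t=0$ and $t=\infty$. The one point worth tightening is your treatment at infinity: the qualitative claim that each application of $-\frac{1}{t}\frac{\mathd}{\mathd t}$ improves the decay can be quantified as $g_k(t)=O\!\left(t^{-n(\nu+1/2)-k}\right)$, so the boundary term is $O\!\left(t^{(1-n)(\nu+1/2)}\right)$ and genuinely vanishes for every $n\ge 2$ --- the extra $t^{k}$ growth of the Bessel factor is exactly cancelled, and no oscillatory-integral fallback for small $n$ is required.
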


\begin{proof}
  As in \cite{broadhurst-rw}, we proceed by induction on $k$. The case $k =
  0$ is \eqref{eq:pn:bessel}. Suppose that \eqref{eq:pn:besselD} is known for
  some $k$. By the Bessel function identity
  \begin{equation}
    \frac{\mathd}{\mathd z} ( z^{\nu} J_{\nu} (z)) = z^{\nu} J_{\nu - 1} (z)
    \label{eq:J:D}
  \end{equation}
  we find that, for any smooth function $g (t)$ which is sufficiently small at
  $0$ and $\infty$,
  \begin{eqnarray*}
    \int_0^{\infty} (t x)^{\alpha + 1} J_{\alpha} (t x) g (t) \mathd t & = &
    \frac{1}{x} \int_0^{\infty} \left[ \frac{\mathd}{\mathd t} (t x)^{\alpha +
    1} J_{\alpha} (t x) \right] g (t) \mathd t\\
    & = & - \frac{1}{x} \int_0^{\infty} (t x)^{\alpha + 1} J_{\alpha} (t x)
    \left[ \frac{\mathd}{\mathd t} g (t) \right] \mathd t\\
    & = & \frac{1}{x^2} \int_0^{\infty} (t x)^{\alpha + 2} J_{\alpha} (t x)
    \left[ - \frac{1}{t} \frac{\mathd}{\mathd t} g (t) \right] \mathd t.
  \end{eqnarray*}
  In the second step, we used integration by parts and assumed that $g (t)$ is
  such that
  \begin{equation}
    (t x)^{\alpha + 1} J_{\alpha} (t x) g (t) \label{eq:Jg}
  \end{equation}
  vanishes as $t \rightarrow 0$ and $t \rightarrow \infty$. In the present
  case, $\alpha = \nu + k$ and $g (t) = \left( - \frac{1}{t}
  \frac{\mathd}{\mathd t} \right)^k j_{\nu}^n (t)$. In order to complete the
  proof of \eqref{eq:pn:besselD}, it only remains to demonstrate that
  \eqref{eq:Jg} indeed vanishes as required.

  As $t \rightarrow 0$, we have $(t x)^{\alpha + 1} J_{\alpha} (t x) = O (t^{2
  \alpha + 1}) = O (t^{2 \nu + 2 k + 1})$ while $g (t) = O (1)$, because
  $j_{\nu}^n (t)$ is an even function. Hence, for $\nu > - 1$, the term
  \eqref{eq:Jg} indeed vanishes as $t \rightarrow 0$. On the other hand, as $t
  \rightarrow \infty$, we have $(t x)^{\alpha + 1} J_{\alpha} (t x) = O
  (t^{\alpha + 1 / 2}) = O (t^{\nu + k + 1 / 2})$ by \eqref{eq:j:infty}.
  Moreover, \eqref{eq:j:infty} and \eqref{eq:J:D} imply that $j_{\nu}^{(m)}
  (t) = O (t^{- \nu - 1 / 2})$, as $t \rightarrow \infty$, and therefore that
  $g (t) = O (t^{- n (\nu + 1 / 2) - k})$. Since $n > 1$, it follows that
  \eqref{eq:Jg} also vanishes as $t \rightarrow \infty$.
\end{proof}

We note that the $\left( - \frac{1}{t} \frac{\mathd}{\mathd t} \right)^k
j_{\nu}^n (t)$ in the integrand of \eqref{eq:pn:besselD} may be expressed as a
finite sum of products of Bessel functions. This is made explicit in
Remark~\ref{rk:j:D:k}.

\subsection{The probability densities in odd dimensions}\label{ssec:odd}

In the case of odd dimension $d$, the Bessel functions in
Section~\ref{sec:basic} have half-integer index $\nu$ and are therefore
elementary, so that the situation is fairly well understood since Rayleigh
\cite{rayleigh-flights,hughes-rw}.
In particular, the probability density functions $p_n (\nu
; x)$ are piecewise polynomial in odd dimensions. This is made explicit by the
following theorem, which is obtained in \cite{garcia-odd}, translated to our
notation.

\begin{theorem}
  {\dueto{Density in odd dimensions \cite[Theorem~2.6]{garcia-odd}}}\label{thm:pn:odd}
  Assume that the dimension $d=2m+1$ is an odd number. Then, for $0 < x < n$,
  \begin{equation*}
    p_n (m - \tfrac{1}{2} ; x) =
    \frac{(2 x)^{2 m} \Gamma (m)}{\Gamma (2 m)}
     \left( - \frac{1}{2 x}  \frac{\mathd}{\mathd x} \right)^m P_{m, n} (x),
  \end{equation*}
  where $P_{m, n} (x)$ is the piecewise polynomial obtained from convoluting
  \begin{equation*}
    \frac{\Gamma (m + 1 / 2)}{\Gamma (1 / 2) \Gamma (m)} \left\{
     \begin{array}{ll}
       (1 - x^2)^{m - 1}, & \text{for $x \in [- 1, 1]$,}\\
       0, & \text{otherwise},
     \end{array} \right.
  \end{equation*}
  $n-1$ times with itself.
\end{theorem}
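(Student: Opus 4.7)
The plan is to exploit the rotational invariance of the walk position and reduce the claim to a classical Abel-type inversion. Since every step is rotationally symmetric, so is the distribution of the position $\boldsymbol Z$ after $n$ steps in $\mathbb{R}^{2m+1}$; hence its joint density has the form $f_{\boldsymbol Z}(\boldsymbol z) = F(\|\boldsymbol z\|^2)$ for some univariate $F$. With $\nu = m - \tfrac12$, relation \eqref{eq:pn:posdis} becomes
\begin{equation*}
  p_n(m-\tfrac12;x) \;=\; \frac{2\pi^{m+1/2}}{\Gamma(m+1/2)}\, x^{2m}\, F(x^2),
\end{equation*}
so it suffices to recover $F$ from a tractable marginal of $\boldsymbol Z$.

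The natural choice is the density $g_n$ of the first coordinate $Z_1$. Integrating $F(\|\boldsymbol z\|^2)$ over the remaining $2m$ coordinates in polar form on $\mathbb{R}^{2m}$ and then substituting $v = t^2 + r^2$ yields the Abel-type transform
\begin{equation*}
  g_n(t) \;=\; \frac{\pi^m}{\Gamma(m)} \int_{t^2}^{\infty} F(v)(v-t^2)^{m-1}\,\mathd v.
\end{equation*}
Viewing the right-hand side as a function of $u=t^2$ and differentiating $m$ times (the boundary contributions vanish because $(v-u)^{m-1}$ does), one inverts the transform and, using $\partial_u = (2t)^{-1}\partial_t$, obtains
\begin{equation*}
  F(t^2) \;=\; \frac{1}{\pi^m}\left(-\frac{1}{2t}\frac{\mathd}{\mathd t}\right)^m g_n(t).
\end{equation*}

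The final step is to identify $g_n$ with $P_{m,n}$. Because $\boldsymbol Z$ is a sum of $n$ independent uniform unit vectors, $Z_1$ is the sum of $n$ independent copies of the first coordinate of a single uniform vector on $S^{2m}\subset\mathbb{R}^{2m+1}$; the standard slicing computation gives that single-coordinate density as $\frac{\Gamma(m+1/2)}{\Gamma(1/2)\Gamma(m)}(1-t^2)^{m-1}$ on $[-1,1]$, which is precisely the kernel being self-convoluted in the statement. Hence $g_n = P_{m,n}$, and substitution produces the prefactor $2\sqrt{\pi}/\Gamma(m+1/2)$, which Legendre's duplication formula $\Gamma(m)\Gamma(m+1/2) = \sqrt{\pi}\,2^{1-2m}\,\Gamma(2m)$ rewrites as $2^{2m}\Gamma(m)/\Gamma(2m)$, matching the theorem. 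The main obstacle is the Abel inversion together with the careful bookkeeping across three Gamma-function identities; rotational invariance and the surface-area computations are textbook.
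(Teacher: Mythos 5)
Your argument is correct, and it is worth noting that the paper itself offers no proof of this statement: it simply imports the result from Garc\'{\i}a-Pelayo \cite{garcia-odd} after translating notation. Your derivation is therefore a genuinely self-contained alternative, and all the key steps check out. The reduction $p_n(m-\tfrac12;x)=\frac{2\pi^{m+1/2}}{\Gamma(m+1/2)}x^{2m}F(x^2)$ is exactly \eqref{eq:pn:posdis}; the marginal of $Z_1$ is indeed
\begin{equation*}
  g_n(t)=\frac{\pi^m}{\Gamma(m)}\int_{t^2}^{\infty}F(v)\,(v-t^2)^{m-1}\,\mathd v,
\end{equation*}
whose $m$-fold differentiation in $u=t^2$ gives $F(u)=\pi^{-m}(-\partial_u)^mG(u)$ with all intermediate boundary terms vanishing (for $m=1$ the single boundary term is the whole computation, as you note); the single-coordinate density $\frac{\Gamma(m+1/2)}{\Gamma(1/2)\Gamma(m)}(1-t^2)^{m-1}$ agrees with the paper's own formula for the law of $\cos\theta$ with $\nu=m-\tfrac12$, so $g_n=P_{m,n}$; and the constant $2\sqrt{\pi}/\Gamma(m+1/2)=2^{2m}\Gamma(m)/\Gamma(2m)$ follows from duplication as you say. (A quick sanity check against the paper's worked example $m=1$, $n=3$ reproduces $p_3(\tfrac12;x)=x^2/2$ on $[0,1]$.) Two small points you should make explicit: the identity is to be read piecewise, since $P_{m,n}$ and $p_n$ fail to be $m$ times differentiable at the breakpoints $x=n,n-2,\dots$, so the $m$-fold differentiation is only valid on the open subintervals between them (consistent with the theorem's restriction to $0<x<n$); and the existence of the radial density $F$ requires $n\ge 2$, whereas the marginal $g_n=P_{m,n}$ exists for all $n\ge 1$ as a convolution of the step marginals, which is the direction your inversion actually uses. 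What your route buys is a purely probabilistic, Bessel-free proof resting on Abel inversion of a one-dimensional projection, which fits naturally alongside the paper's Remark on $\Lambda=\cos\theta$ and Theorem~\ref{thm:pn:rec}.
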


\begin{example}
  In the case $n = 3$ and $d = 3$, we have $m = 1$ and
  \begin{equation*}
    P_{1, 1} (x) = \frac{1}{2} \left\{ \begin{array}{ll}
       1, & \text{if $|x| \leq 1$,}\\
       0, & \text{otherwise},
     \end{array} \right.
  \end{equation*}
  as well as
  \begin{equation*}
    P_{1, 2} (x) = \int_{- \infty}^{\infty} P_{1, 1} (t) P_{1, 1} (x - t)
     \mathd t = \frac{1}{4} \left\{ \begin{array}{ll}
       2 - |x|, & \text{if $|x| \leq 2$,}\\
       0, & \text{otherwise},
     \end{array} \right.
  \end{equation*}
  and, hence,
  \begin{equation*}
    P_{1, 3} (x) = \int_{- \infty}^{\infty} P_{1, 2} (t) P_{1, 1} (x - t)
     \mathd t = \frac{1}{8} \left\{ \begin{array}{ll}
       3 - x^2, & \text{if $|x| \leq 1$,}\\
       \frac{1}{2} (|x| - 3)^2, & \text{if $1 < |x| \leq 3$,}\\
       0, & \text{otherwise} .
     \end{array} \right.
  \end{equation*}
  We thus find that, for $0 \leq x \leq 3$,
  \begin{equation*}
    p_3 ( \tfrac{1}{2} ; x) = 4 x^2 \left( - \frac{1}{2 x}
     \frac{\mathd}{\mathd x} \right) P_{1, 3} (x) = \frac{x}{4} \left\{
     \begin{array}{ll}
       2 x, & \text{if $0 \leq x \leq 1$,}\\
       3 - x, & \text{if $1 < x \leq 3$.}
     \end{array} \right.
  \end{equation*}
  Since $j_{1/2}(x)= \sinc(x)=\sin(x)/x$, evaluation of the densities in three dimensions can also be approached using the tools provided by \cite{dbjb}.

  Similarly, we obtain, for instance,
  \begin{equation}\label{eq:p4:3d}
    p_4 ( \tfrac{1}{2} ; x) = \frac{x}{16} \left\{ \begin{array}{ll}
       x (8 - 3 x), & \text{if $0 \leq x \leq 2$,}\\
       (4 - x)^2, & \text{if $2 < x \leq 4$.}
     \end{array} \right.
  \end{equation}
  We note that Theorem \ref{thm:pn:odd} can be usefully implemented in a computer algebra system such as \emph{Maple} or \emph{Mathematica}.
\end{example}

\begin{example}\dueto{Moments in odd dimensions}
  By integrating \eqref{eq:p4:3d}, we are able to symbolically compute the
  corresponding moments, as introduced in \eqref{eq:Wn}, as
  \[ W_4(1/2;s)=2^{s+3}\,{\frac {1-{2}^{s+2}}{ \left( s+2 \right) }\frac{1}{ \left( s+4 \right)  \left( s+3 \right) }}, \]
  which has a removable singularity $-2$ and poles at $-3$ and $-4$.
  Likewise, in five dimensions,
  \[W_4(3/2;s)=
    \frac{(12)^3{2}^{s+1} \left( {s}^{3}+27\,{s}^{2}+230\,s+616+64\,{2}^{s
    } \left( {s}^{3}+15\,{s}^{2}+62\,s+56 \right)  \right) }{ \left( s+12
      \right)  \left( s+10 \right)  \left( s+9 \right)  \left( s+8 \right)
      \left( s+7 \right)  \left( s+6 \right)  \left( s+5 \right)  \left( s+
    4 \right)  \left( s+2 \right) }
  \]
  with poles at $-5,-7,-8,-9,-10,-12 $ and removable singularities at the
  other apparent poles. Thus, even this elementary evaluation has subtle structure.
\end{example}

\subsection{The moment functions}\label{ssec:mom}

Theorem~\ref{thm:pn:besselD} allows us to prove a corresponding Bessel
integral representation for the \emph{moment function}
\begin{equation*}
  W_n (\nu ; s) = \int_0^{\infty} x^s p_n (\nu ; x) \mathd x
\end{equation*}
of the distance to the origin after $n$ random steps. The following result
generalizes \cite[Theorem~1]{broadhurst-rw} from two to arbitrary
dimensions.

\begin{theorem}
  {\dueto{Bessel integral for the moments}}\label{thm:W:bessel}Let $n
  \geq 2$ and $d \geq 2$. For any nonnegative integer $k$,
  \begin{equation}
    W_n ( \nu ; s) = \frac{2^{s - k + 1} \Gamma \left( \frac{s}{2} + \nu + 1
    \right)}{\Gamma (\nu + 1) \Gamma \left( k - \frac{s}{2} \right)}
    \int_0^{\infty} x^{2 k - s - 1} \left( - \frac{1}{x}  \frac{\mathd}{\mathd
    x} \right)^k j_{\nu}^n ( x) \mathd x, \label{eq:W:bessel}
  \end{equation}
  provided that $k - n (\nu + 1 / 2) < s < 2 k$.
\end{theorem}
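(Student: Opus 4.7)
The plan is to substitute the Bessel integral representation of $p_n(\nu;x)$ from Theorem~\ref{thm:pn:besselD} directly into the definition $W_n(\nu;s) = \int_0^\infty x^s p_n(\nu;x)\,\mathd x$ and evaluate the resulting double integral by reducing the inner integral to a classical Mellin transform of a Bessel function. Writing $g(t)\assign\left(-\frac{1}{t}\frac{\mathd}{\mathd t}\right)^k j_\nu^n(t)$ for brevity, formally swapping the two integrations yields
\begin{equation*}
W_n(\nu;s) = \frac{2^{-\nu}}{\nu!}\int_0^\infty g(t)\int_0^\infty x^{s-2k}(tx)^{\nu+k+1}J_{\nu+k}(tx)\,\mathd x\,\mathd t.
\end{equation*}
After the substitution $u=tx$ the inner integral collapses to $t^{2k-s-1}\int_0^\infty u^{s-k+\nu+1}J_{\nu+k}(u)\,\mathd u$, and the prefactor $t^{2k-s-1}$ is exactly what is needed for the claimed outer integral in \eqref{eq:W:bessel}.

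Next I would invoke the Weber--Schafheitlin type evaluation
\begin{equation*}
\int_0^\infty u^{\mu-1}J_\alpha(u)\,\mathd u = 2^{\mu-1}\frac{\Gamma((\alpha+\mu)/2)}{\Gamma((\alpha-\mu)/2+1)},
\end{equation*}
specialized to $\mu=s-k+\nu+2$ and $\alpha=\nu+k$. A short calculation shows $(\alpha+\mu)/2=\nu+s/2+1$ and $(\alpha-\mu)/2+1=k-s/2$, so the inner integral is precisely $2^{s-k+\nu+1}\Gamma(s/2+\nu+1)/\Gamma(k-s/2)$. Collecting the constants with the external $2^{-\nu}/\nu!$ produces the coefficient $2^{s-k+1}\Gamma(s/2+\nu+1)/[\Gamma(\nu+1)\Gamma(k-s/2)]$ of \eqref{eq:W:bessel}.

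The main technical point is justifying the interchange of the two integrals and ensuring the Mellin integral above actually converges, and this is precisely where the hypothesis $k-n(\nu+1/2)<s<2k$ enters. Reusing the asymptotic analysis from the proof of Theorem~\ref{thm:pn:besselD}, one has $g(t)=O(1)$ as $t\to 0^+$ (because $j_\nu^n$ is an even smooth function) and $g(t)=O(t^{-n(\nu+1/2)-k})$ as $t\to\infty$. Therefore $t^{2k-s-1}g(t)$ is absolutely integrable on $(0,\infty)$ exactly when $s<2k$ (near $0$) and $s>k-n(\nu+1/2)$ (near $\infty$). The Mellin integral in $u$ converges in the usual classical sense under the same bilateral condition, which matches the constraint stated in the theorem. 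Combined with the nonnegativity of $p_n(\nu;x)$ to get an initial Fubini estimate in the region where everything is absolutely convergent, this makes the formal manipulation rigorous and completes the proof.
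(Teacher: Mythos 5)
Your computation follows the paper's own route exactly: substitute Theorem~\ref{thm:pn:besselD} into the definition of $W_n(\nu;s)$, interchange the integrals, substitute $u=tx$, and evaluate the inner integral by the classical Mellin transform of $J_{\nu+k}$. The parameter bookkeeping ($(\alpha+\mu)/2=\tfrac{s}{2}+\nu+1$, $(\alpha-\mu)/2+1=k-\tfrac{s}{2}$) and the collection of constants are correct, as is your convergence analysis of the outer $t$-integral, whose absolute convergence is indeed equivalent to $k-n(\nu+1/2)<s<2k$.

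However, there is a genuine gap in the last paragraph. You assert that the inner Mellin integral $\int_0^\infty u^{\mu-1}J_\alpha(u)\,\mathd u$ ``converges in the usual classical sense under the same bilateral condition, which matches the constraint stated in the theorem.'' That is false: the classical evaluation requires $-\alpha<\mu<3/2$, which with $\mu=s-k+\nu+2$ and $\alpha=\nu+k$ translates into $-d<s<k-\nu-1/2$. This strip is strictly smaller than the claimed range $k-n(\nu+1/2)<s<2k$ (for example, with $k=2$ and $\nu=0$ the inner integral diverges for $3/2\le s<4$, where the theorem is nevertheless asserted). Consequently your direct double-integral manipulation only establishes \eqref{eq:W:bessel} on the nonempty strip $-d<s<k-\nu-1/2$. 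The paper closes exactly this gap by an analytic continuation argument: both sides of \eqref{eq:W:bessel} are analytic in $s$ throughout $k-n(\nu+1/2)<s<2k$ (the right-hand side because the outer integral converges absolutely there, by the asymptotics of $g$ that you correctly quote), and since they agree on the smaller strip they agree on the whole strip. Adding that one sentence, together with a cleaner justification of the interchange (the double integral is not absolutely convergent because of the oscillation of $J_{\nu+k}$, so the Fubini appeal via nonnegativity of $p_n$ does not suffice as stated), would complete your proof.
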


\begin{proof}
  Using Theorem~\ref{thm:pn:besselD}, we have
  \begin{equation*}
    W_n ( \nu ; s) = \frac{2^{- \nu}}{\nu !} \int_0^{\infty} x^{s - 2 k}
     \int_0^{\infty} (t x)^{\nu + k + 1} J_{\nu + k} (t x) \left( -
     \frac{1}{t} \frac{\mathd}{\mathd t} \right)^k j_{\nu}^n (t) \mathd t
     \mathd x.
  \end{equation*}
  Interchanging the order of integration and substituting $z = t x$, we obtain
  \begin{equation*}
    W_n ( \nu ; s) = \frac{2^{- \nu}}{\nu !} \int_0^{\infty} t^{2 k - s - 1}
     \left[ \left( - \frac{1}{t} \frac{\mathd}{\mathd t} \right)^k j_{\nu}^n
     (t) \right] \int_0^{\infty} z^{\nu + s - k + 1} J_{\nu + k} (z) \mathd z
     \mathd t.
  \end{equation*}
  The inner integral may be evaluated using the standard Bessel integral evaluation
  \begin{equation*}
    \int_0^{\infty} z^a J_{\nu} ( z) \mathd z = \frac{2^a \Gamma \left(
     \frac{1 + a + \nu}{2} \right)}{\Gamma \left( \frac{1 - a + \nu}{2}
     \right)},
  \end{equation*}
  which holds for $a$ and $\nu$ such that $a + \nu > - 1$ and $a < 1 / 2$. We
  conclude that
  \begin{eqnarray*}
    W_n ( \nu ; s) & = & \frac{2^{- \nu}}{\nu !} \int_0^{\infty} t^{2 k - s -
    1} \left[ \left( - \frac{1}{t} \frac{\mathd}{\mathd t} \right)^k j_{\nu}^n
    (t) \right] \frac{2^{\nu + s - k + 1} \Gamma \left( \frac{s}{2} + \nu + 1
    \right)}{\Gamma \left( k - \frac{s}{2} \right)} \mathd t,
  \end{eqnarray*}
  which is the desired result. For the evaluation of the intermediate Bessel
  integral, we assumed $s > - 2 \nu - 2 = - d$ and $s < k - \nu - 1 / 2$, and
  so \eqref{eq:W:bessel} holds for all $s$ in this non-empty strip provided
  that the original integral converges. Using the asymptotic bounds from the
  proof of Theorem~\ref{thm:pn:besselD}, we note that the integral
  \eqref{eq:W:bessel} converges absolutely in the strip $k - n (\nu + 1 / 2) <
  s < 2 k$. Analytic continuation therefore implies that \eqref{eq:W:bessel}
  holds for all $s$ in this strip.
\end{proof}

We deduce the following from \eqref{eq:W:bessel}, with $k = 0$. This extends
\cite[Proposition 2.4]{bsw-rw2}. In particular, we observe that, for $n >
2$, the first pole of $W_n (\nu ; s)$ occurs at $s = - ( 2 \nu + 2) = - d$.

\begin{corollary}\dueto{Poles and residues of the moments}
  \label{cor:W:res}Let $n > 2$. In the half-plane $\Re s > - n (\nu + 1
  / 2)$, the moment functions $W_n (\nu ; s)$ are analytic apart from simple
  poles at $s = - d - 2 m$ for integers $m$ such that $0 \leq m <
  \frac{n}{2} \left( \frac{d}{2} - \frac{1}{2} \right) - \frac{d}{2}$. The
  residues of these poles are
  \begin{equation*}
    \operatorname{Res}_{s = - d - 2 m} W_n (\nu ; s) = \frac{2^{- 2 \nu - 2 m}}{\nu !
     ( \nu + m) !} \frac{( - 1)^m}{m!} \int_0^{\infty} x^{2 \nu + 2 m + 1}
     j_{\nu}^n ( x) \mathd x.
  \end{equation*}
\end{corollary}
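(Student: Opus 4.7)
The plan is to specialize Theorem~\ref{thm:W:bessel} to $k = 0$, which gives
\begin{equation*}
W_n (\nu ; s) \;=\; \frac{2^{s+1}\,\Gamma\!\left(\tfrac{s}{2}+\nu+1\right)}{\Gamma(\nu+1)\,\Gamma\!\left(-\tfrac{s}{2}\right)} \int_0^{\infty} x^{-s-1}\, j_{\nu}^n(x)\,\mathd x,
\end{equation*}
a representation valid for $-n(\nu+1/2) < \Re s < 0$. The integral defines a holomorphic function in that strip by the asymptotics used in the proof of Theorem~\ref{thm:pn:besselD} (near $0$ it behaves like $x^{-s-1}$ and at infinity like $x^{-s-1-n(\nu+1/2)}$). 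The reciprocal Gamma factor $1/\Gamma(-s/2)$ is entire, so the only poles of the right-hand side in this strip arise from $\Gamma(s/2+\nu+1)$, which has simple poles exactly at $s/2 + \nu + 1 = -m$, i.e. $s = -d - 2m$, for $m = 0, 1, 2, \ldots$; these lie in the strip precisely when $m < \tfrac{n}{2}(\tfrac{d}{2}-\tfrac{1}{2}) - \tfrac{d}{2}$, which is the stated range.

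To extract the residues I will use $\operatorname{Res}_{z=-m}\Gamma(z) = (-1)^m/m!$ together with the chain-rule factor $2$ coming from the substitution $z = s/2+\nu+1$, so that $\operatorname{Res}_{s=-d-2m}\Gamma(s/2+\nu+1) = 2(-1)^m/m!$. Evaluating the remaining factors at $s = -d - 2m$ gives $2^{s+1} = 2^{-d-2m+1}$, $\Gamma(-s/2) = \Gamma(\nu+1+m) = (\nu+m)!$, and the integrand's power becomes $x^{-s-1} = x^{2\nu+2m+1}$; combining these (and using $-d+2 = -2\nu$) yields
\begin{equation*}
\operatorname{Res}_{s=-d-2m} W_n(\nu;s) \;=\; \frac{2^{-2\nu-2m}}{\nu!\,(\nu+m)!}\,\frac{(-1)^m}{m!} \int_0^{\infty} x^{2\nu+2m+1} j_{\nu}^n(x)\,\mathd x,
\end{equation*}
as asserted. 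The absolute convergence of this integral at infinity is exactly the same condition $m < \tfrac{n}{2}(\tfrac{d}{2}-\tfrac{1}{2}) - \tfrac{d}{2}$, so no residue in the stated range is ill-defined.

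To conclude that there are no other poles in the full half-plane $\Re s > -n(\nu+1/2)$, I extend the identification beyond $\Re s = 0$ by invoking Theorem~\ref{thm:W:bessel} with successively larger $k \geq 1$: in each strip $k - n(\nu+1/2) < \Re s < 2k$ the only potential pole is still from $\Gamma(s/2+\nu+1)$, and for $\Re s \geq 0$ this factor is analytic (its argument exceeds $\nu+1 > 0$), so no new singularities appear. The main (though modest) obstacle is the bookkeeping of the factor of $2$ in the residue of $\Gamma(s/2+\nu+1)$ and the algebraic identity $-d+2 = -2\nu$ that converts the exponent of $2$ to the form given; everything else is a direct evaluation.
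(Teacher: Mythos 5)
Your proposal is correct and follows essentially the same route as the paper: specialize the Bessel integral representation of Theorem~\ref{thm:W:bessel} to $k=0$, observe that in the resulting strip the only poles come from the factor $\Gamma(s/2+\nu+1)$ (with the chain-rule factor of $2$ in the residue), and assemble the stated formula. Your explicit extension to $\Re s \ge 0$ via larger $k$ is a detail the paper leaves implicit, not a different method.
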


\begin{proof}
  By equation \eqref{eq:W:bessel}, with $k = 0$, we have
  \begin{equation*}
    W_n ( \nu ; s) = \frac{2^{s + 1} \Gamma \left( \frac{s}{2} + \nu + 1
     \right)}{\Gamma (\nu + 1) \Gamma \left( - \frac{s}{2} \right)}
     \int_0^{\infty} x^{- s - 1} j_{\nu}^n ( x) \mathd x,
  \end{equation*}
  valid for $s$ in the strip $- n (\nu + 1 / 2) < s < 0$, in which the
  integral converges absolutely. In the region of interest, the only poles are
  contributed by the factor $\Gamma \left( \frac{s}{2} + \nu + 1 \right)$,
  which, as a function in $s$, has simple poles at $s = - d - 2 m$, for $m =
  0, 1, 2, \ldots$, with residue $2 \frac{( - 1)^m}{m!}$.
\end{proof}

Note that the value for the residue of $W_n ( \nu ; s)$ at $s = - d$ agrees
with
\begin{equation*}
  p_{n - 1} (\nu ; 1) = \frac{1}{(2 \nu + 1) !} p_n^{(2 \nu + 1)} (\nu ; 0)
\end{equation*}
from \eqref{eq:pn:diffrel0} provided that these values are finite.

\begin{example}
  \label{eg:W3:poles}The moment functions $W_3 ( \nu ; s)$ are plotted in
  Figure~\ref{fig:W3} for $\nu = 0, 1, 2$. The first pole of $W_3 ( \nu ; s)$
  occurs at $s = - d$, is simple and has residue
  \begin{equation*}
    \operatorname{Res}_{s = - d} W_3 ( \nu ; s) = \frac{2^{- 2 \nu}}{\nu !^2}
     \int_0^{\infty} x^{2 \nu + 1} j_{\nu}^3 ( x) \mathd x = \frac{2}{\sqrt{3}
     \pi}  \frac{3^{\nu}}{\binom{2 \nu}{\nu}},
  \end{equation*}
  as follows from the Bessel integral formula \eqref{eq:int:J3}. In order to
  record some more general properties of the pole structure of $W_3 ( \nu ;
  s)$, we use the fact that $W_3 ( \nu ; s)$ satisfies a functional equation,
  \eqref{eq:rec3}, which relates the three terms $W_3 ( \nu ; s)$, $W_3 ( \nu
  ; s + 2)$ and $W_3 ( \nu ; s + 4)$. By reversing this functional equation,
  we find that the residues $r_k = \operatorname{Res}_{s = - d - 2 k} W_3 ( \nu ; s)$
  satisfy the recursion
  \begin{eqnarray}
    &  & 9 ( k + 1) ( k + \nu + 1) r_{k + 1} \nonumber\\
    & = & \tfrac{1}{2} \left( 20 \left( k + \tfrac{1}{2} \right)^2 - 20
    \left( k + \tfrac{1}{2} \right) \nu - 4 \nu^2 + 1 \right) r_k - ( k - \nu)
    ( k - 2 \nu) r_{k - 1},  \label{eq:rec3r}
  \end{eqnarray}
  with $r_{- 1} = 0$ and $r_0 = \frac{2}{\sqrt{3} \pi}
  \frac{3^{\nu}}{\binom{2 \nu}{\nu}}$. Observe that, when $\nu = 0$, the
  recursion for these residues is essentially the same as the recurrence
  \eqref{eq:rec3} for the corresponding even moments (with $u_k$ replaced by
  $3^{2 k} r_k$). As recorded in \cite[Proposition 2.4]{bsw-rw2}, this lead
  to
  \begin{equation*}
    \operatorname{Res}_{s = - 2 ( k + 1)} W_3 ( 0 ; s) = \frac{2}{\sqrt{3} \pi}
     \frac{W_3 ( 0 ; 2 k)}{3^{2 k}} .
  \end{equation*}
  Define, likewise, the numbers $V_3 ( \nu ; k)$ by
  \begin{equation*}
    \operatorname{Res}_{s = - 2 ( \nu + k + 1)} W_3 ( \nu ; s) = \frac{2}{\sqrt{3}
     \pi}  \frac{3^{\nu}}{\binom{2 \nu}{\nu}} \frac{V_3 ( \nu ; k)}{3^{2 k}} .
  \end{equation*}
  In analogy with \eqref{eq:rec3r}, we find that $u_k = V_3 ( \nu ; k)$ solves
  the three-term recurrence
  \begin{eqnarray*}
    &  & ( k + 1) ( k + \nu + 1) u_{k + 1}\\
    & = & \tfrac{1}{2} \left( 20 \left( k + \tfrac{1}{2} \right)^2 - 20
    \left( k + \tfrac{1}{2} \right) \nu - 4 \nu^2 + 1 \right) u_k - 9 ( k -
    \nu) ( k - 2 \nu) u_{k - 1},
  \end{eqnarray*}
  with $u_{- 1} = 0$ and $u_0 = 1$. For small dimensions, initial values for
  $V_3 ( \nu ; k)$ are given by
  \begin{eqnarray*}
    d = 2 \hspace{1em} ( \nu = 0) & : & 1, 3, 15, 93, 639, 4653, 35169,
    272835, 2157759, \ldots\\
    d = 4 \hspace{1em} ( \nu = 1) & : & 1, - 2, - 2, - 6, - 24, - 114, - 606,
    - 3486, - 21258, \ldots\\
    d = 6 \hspace{1em} ( \nu = 2) & : & 1, - 5, 6, 2, 6, 18, 66, 278, 1296,
    \ldots\\
    d = 8 \hspace{1em} ( \nu = 3) & : & 1, - \tfrac{15}{2}, 21, - 20, 0, - 9,
    - 20, - 60, - 210, \ldots
  \end{eqnarray*}
  Note the increasingly irregular behaviour of $V_3 ( \nu ; k)$ as $d$
  increases. In particular, in dimension $8$, we find $V_3 ( 3 ; 4) = 0$,
  which signifies the disappearance of the perhaps expected pole of $W_3 ( 3 ; s)$ at
  $s = - 16$.
\end{example}

\begin{figure}[h]
  \centering
  \includegraphics[width=4.5cm]{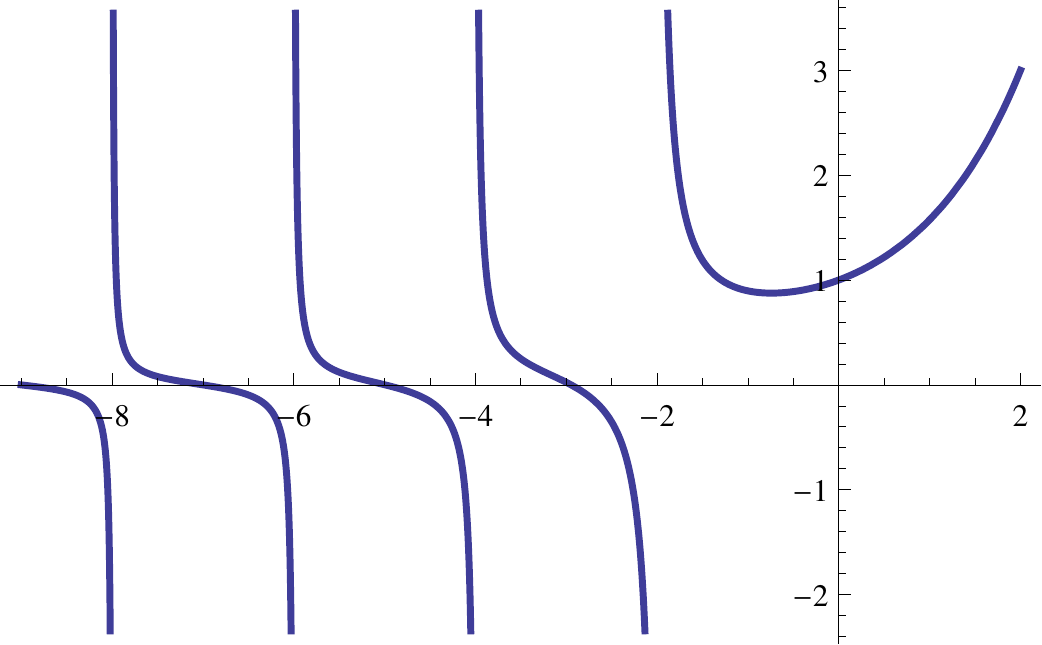}
  \quad
  \includegraphics[width=4.5cm]{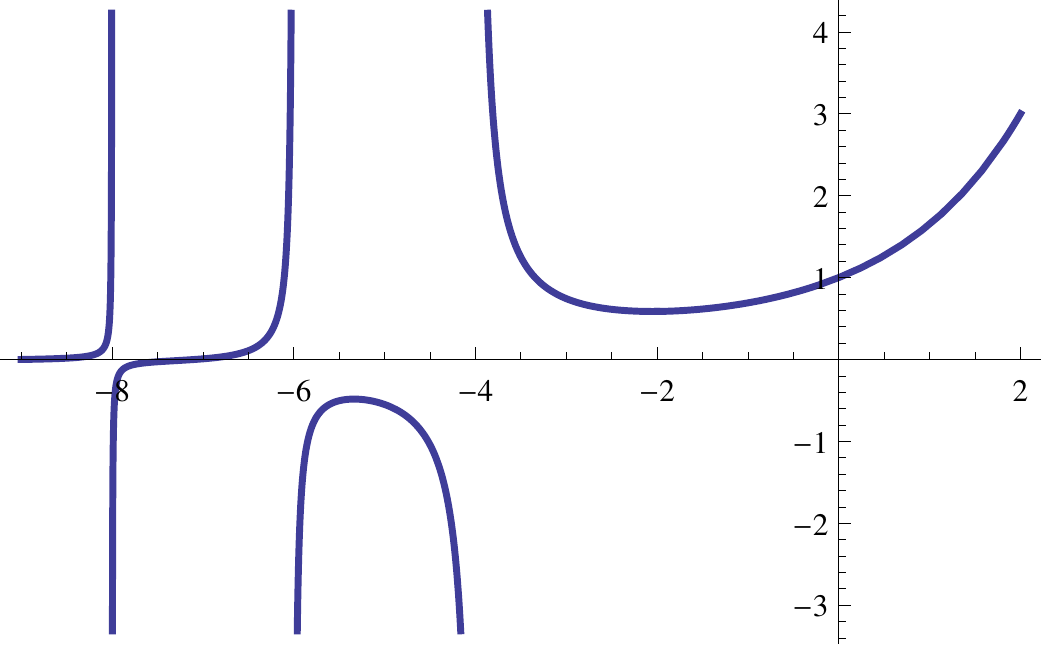}
  \quad
  \includegraphics[width=4.5cm]{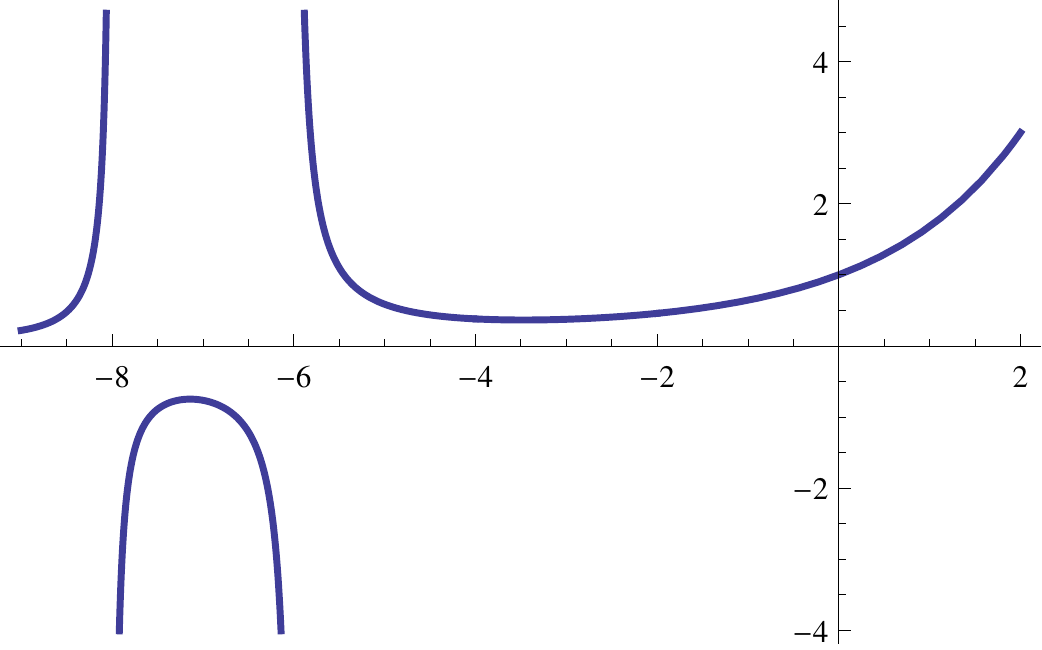}
  \caption{\label{fig:W3}$W_3 ( \nu ; s)$ on $[ - 9, 2]$ for $\nu = 0, 1, 2$.}
\end{figure}

\begin{example}
  \label{eg:W4:poles}In the case $n = 4$ and $\nu = 1$, that is $d = 4$, the
  moment function $W_4 ( 1 ; s)$ has a simple pole at $s = - 4$ with residue
  \begin{equation*}
    \operatorname{Res}_{s = - 4} W_4 (1 ; s) = \frac{1}{4} \int_0^{\infty} x^3 j_1^4
     ( x) \mathd x = \frac{4}{\pi^2},
  \end{equation*}
  and is otherwise analytic in the half-plane $\Re s > - 6$. At $s = -
  6$, on the other hand, $W_4 ( 1 ; s)$ has a double pole. Indeed, analyzing
  the functional equations that arise from \eqref{eq:rec4} and
  \eqref{eq:W4:rec:nu:s}, we derive that
  \begin{equation*}
    \lim_{s \rightarrow - 6} ( s + 6)^2 W_4 ( 1 ; s) = - \frac{1}{2} \lim_{s
     \rightarrow - 2} ( s + 2)^2 W_4 ( 0 ; s) = - \frac{3}{4 \pi^2},
  \end{equation*}
  where in the last equality we used the known value from the planar case
  \cite[Example 4.3]{bswz-densities}. Similarly, we obtain
  \begin{eqnarray*}
    \operatorname{Res}_{s = - 6} W_4 ( 1 ; s) & = & - \frac{1}{2} \operatorname{Res}_{s = - 2}
    W_4 ( 0 ; s) + \frac{1}{24} \lim_{s \rightarrow - 2} ( s + 2)^2 W_4 ( 0 ;
    s)\\
    & = & \frac{1}{16 \pi^2} - \frac{9 \log ( 2)}{4 \pi^2} .
  \end{eqnarray*}
  In the higher-dimensional case, $W_4 ( \nu ; s)$ has poles at $s = - d - 2
  m$ for $m = 0, 1, 2, \ldots$, which are initially simple but turn into poles
  of order (up to) $2$ beginning at $s = - ( 4 \nu + 2)$.
\end{example}

The approach indicated in Examples~\ref{eg:W3:poles} and \ref{eg:W4:poles}
enables us to determine, at least in principle, the pole structure of the
moment functions $W_n ( \nu ; s)$ in each case. We do not pursue such a more
detailed analysis herein.

We next follow the approach of \cite{broadhurst-rw} to obtain a summatory
expression for the even moments from Theorem~\ref{thm:pn:besselD}.

\begin{theorem}
  \label{thm:W:even}{\dueto{Multinomial sum for the moments}}The even moments
  of an $n$-step random walk in dimension $d$ are given by
  \begin{equation*}
    W_n ( \nu ; 2 k) = \frac{( k + \nu) ! \nu !^{n - 1}}{( k + n \nu) !}
     \sum_{k_1 + \cdots + k_n = k} \binom{k}{k_1, \ldots, k_n} \binom{k + n
     \nu}{k_1 + \nu, \ldots, k_n + \nu} .
  \end{equation*}
\end{theorem}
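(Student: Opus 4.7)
The case $k = 0$ is immediate (the single tuple $(0,\ldots,0)$ gives $\binom{n\nu}{\nu,\ldots,\nu}$, which cancels the prefactor $\nu!^n/(n\nu)!$), so assume $k \geq 1$. The strategy is to specialise the Bessel integral of Theorem~\ref{thm:W:bessel} to $s = 2k$. Since $\Gamma(k - s/2)$ has a pole there, I replace the auxiliary parameter $k$ in \eqref{eq:W:bessel} by $k+1$; for $k \geq 1$ the value $s = 2k$ lies strictly inside the admissible strip $(k+1) - n(\nu + 1/2) < s < 2(k+1)$, and the formula simplifies to
\begin{equation*}
  W_n(\nu; 2k) = \frac{2^k (k+\nu)!}{\nu!} \int_0^\infty x \left( -\frac{1}{x} \frac{\mathd}{\mathd x} \right)^{k+1} j_\nu^n(x) \mathd x.
\end{equation*}

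Writing $D = -\frac{1}{x} \frac{\mathd}{\mathd x}$, one has $x \,(Dg)(x) = -g'(x)$, so a single integration by parts collapses the integral to a boundary value:
\begin{equation*}
  \int_0^\infty x \, D^{k+1} j_\nu^n(x) \mathd x = \bigl[ -D^k j_\nu^n(x) \bigr]_0^\infty = D^k j_\nu^n(0).
\end{equation*}
The boundary contribution at infinity vanishes by the decay estimates $j_\nu^{(m)}(t) = O(t^{-\nu - 1/2})$ used in the proof of Theorem~\ref{thm:pn:besselD}, and $D^k j_\nu^n$ extends continuously to $0$ because $j_\nu^n$ is real analytic in $x^2$.

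It remains to evaluate $D^k j_\nu^n(0)$. Since \eqref{eq:j} writes $j_\nu$ as a power series in $x^2$, set $F(u) = \nu! \sum_{m \geq 0} \frac{(-u/4)^m}{m!(m+\nu)!}$ and $G(u) = F(u)^n$, so that $j_\nu^n(x) = G(x^2)$. The substitution $u = x^2$ linearises $D$: from $D[H(x^2)] = -2 H'(x^2)$ one obtains by induction $D^k[G(x^2)] = (-2)^k G^{(k)}(x^2)$. Hence, using the multinomial expansion of $F(u)^n$,
\begin{equation*}
  D^k j_\nu^n(0) = (-2)^k \, k! \, [u^k] F(u)^n = \frac{k! \, \nu!^n}{2^k} \sum_{k_1 + \cdots + k_n = k} \prod_{i=1}^n \frac{1}{k_i! (k_i + \nu)!}.
\end{equation*}
Substituting into the expression for $W_n(\nu; 2k)$, the powers of $2$ cancel and
\begin{equation*}
  W_n(\nu; 2k) = (k+\nu)! \, \nu!^{n-1} \, k! \sum_{k_1 + \cdots + k_n = k} \prod_{i=1}^n \frac{1}{k_i! (k_i + \nu)!}.
\end{equation*}
Multiplying and dividing each summand by $k! \, (k + n\nu)!$ and then distributing these factorials across the product over $i$ regroups the terms into the two multinomial coefficients $\binom{k}{k_1, \ldots, k_n}$ and $\binom{k + n\nu}{k_1 + \nu, \ldots, k_n + \nu}$, yielding the claimed identity.

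The only real difficulty is bookkeeping of the factors of $2$, signs, and factorials; once $D$ is re-expressed in $u = x^2$, the rest is forced, and the argument uses nothing beyond Theorem~\ref{thm:W:bessel} and the Bessel series \eqref{eq:j}.
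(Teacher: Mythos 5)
Your proposal is correct and follows essentially the same route as the paper: specialise Theorem~\ref{thm:W:bessel} with the auxiliary parameter shifted to $k+1$ at $s=2k$, collapse the integral to the boundary value $\bigl[(-\tfrac{2}{x}\tfrac{\mathd}{\mathd x})^k j_\nu^n(x)\bigr]_{x=0}$, and read off the multinomial sum from the power series of $j_\nu$ in $x^2$. Your explicit substitution $u=x^2$ and the separate treatment of $k=0$ (where the admissible strip can degenerate, e.g.\ for $n=2$, $\nu=0$) are slightly more careful renderings of steps the paper leaves implicit, but the argument is the same.
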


\begin{proof}
  Replacing $k$ by $k + 1$ in \eqref{eq:W:bessel} and setting $s = 2 k$, we
  obtain
  \begin{eqnarray}
    W_n ( \nu ; 2 k) & = & \frac{2^k ( k + \nu) !}{\nu !} \int_0^{\infty} -
    \frac{\mathd}{\mathd x} \left( - \frac{1}{x}  \frac{\mathd}{\mathd x}
    \right)^k j_{\nu}^n ( x) \mathd x \nonumber\\
    & = & \left[ \frac{( k + \nu) !}{\nu !} \left( - \frac{2}{x}
    \frac{\mathd}{\mathd x} \right)^k j_{\nu}^n ( x) \right]_{x = 0} .
    \label{eq:W:even:jn}
  \end{eqnarray}
  Observe that, at the level of formal power series, we have
  \begin{equation*}
    \left[ \left( - \frac{2}{x}  \frac{\mathd}{\mathd x} \right)^k \sum_{m
     \geq 0} a_m \left( - \frac{x^2}{4} \right)^m \right]_{x = 0} = k!a_k
     .
  \end{equation*}
  Recall from \eqref{eq:j} the series
  \begin{equation*}
    j_{\nu} ( x) = \nu ! \sum_{m \geq 0} \frac{( - x^2 / 4)^m}{m! ( m +
     \nu) !},
  \end{equation*}
  to conclude that
  \begin{equation*}
    W_n ( \nu ; 2 k) = \frac{( k + \nu) !}{\nu !} \nu !^n k! \sum_{m_1 +
     \cdots + m_n = k} \frac{1}{m_1 ! \cdots m_n !}  \frac{1}{( m_1 + \nu) !
     \cdots ( m_n + \nu) !},
  \end{equation*}
  which is equivalent to the claimed formula.
\end{proof}

\begin{remark}
  \label{rk:j:D:k}Proceeding as in the proof of Theorem~\ref{thm:W:even}, we
  observe that
  \begin{equation*}
    \left( - \frac{2}{x}  \frac{\mathd}{\mathd x} \right)^k j_{\nu} (x) =
     \frac{\nu !}{(\nu + k) !} j_{\nu + k} (x),
  \end{equation*}
  and hence
  \begin{eqnarray*}
    &  & \left( - \frac{2}{x}  \frac{\mathd}{\mathd x} \right)^k j_{\nu_1}
    (x) \cdots j_{\nu_n} (x)\\
    & = & \sum_{k_1 + \cdots + k_n = k} \frac{k!}{k_1 ! \cdots k_n !}
    \frac{\nu_1 ! \cdots \nu_n !}{(k_1 + \nu_1) ! \cdots (k_n + \nu_n) !}
    j_{\nu_1 + k_1} (x) \cdots j_{\nu_n + k_n} (x) .
  \end{eqnarray*}
  If applied to \eqref{eq:W:even:jn}, this (finite) expansion, together with
  $j_{\nu} (0) = 1$, makes the conclusion of Theorem~\ref{thm:W:even}
  apparent. In conjunction with the asymptotics of $j_{\nu}$, we conclude, as
  in the proof of Theorem~\ref{thm:pn:besselD}, that the integrand in
  \eqref{eq:pn:besselD} is $O (t^{- ( n - 1) (\nu + 1 / 2)})$ as $t
  \rightarrow \infty$, so that each additional derivative improves the order
  at $+ \infty$ by $1$ --- at the expense of increasing the size of the
  coefficients. We note, inter alia, that \eqref{eq:W:even:jn} may,
  alternatively, be expressed as
  \begin{equation*}
    j_{\nu}^n ( x) = \nu ! \hspace{0.25em} \sum_{k \geq 0} \frac{W_n
     (\nu ; 2 k)}{k! (k + \nu) !} \left( - \frac{x^2}{4} \right)^k ,
  \end{equation*} which yields a fine alternative generating function for the even moments.
\end{remark}

\begin{example}
  \label{eg:W:smallk}In the case $k = 1$, Theorem~\ref{thm:W:even} immediately
  implies that the second moment of an $n$-step random walk in any dimension
  is
  \begin{equation*}
    W_n (\nu ; 2) = n.
  \end{equation*}
  This was proved in \cite[Theorem~4.2]{wan-phd} using a multi-dimensional
  integral representation and hyper-spherical coordinates. Similarly, we find
  that
  \begin{equation}\label{eq:W:k4}
    W_n (\nu ; 4) =
    \frac{n (n (\nu + 2) - 1)}{\nu + 1} .
  \end{equation}
  More generally, Theorem~\ref{thm:W:even} shows that $W_n (\nu ; 2 k)$ is a
  polynomial of degree $k$ in $n$, with coefficients that are rational
  functions in $\nu$. For instance,
  \begin{equation}
    W_n (\nu ; 6) =
    \frac{n (n^2 (\nu+2)(\nu+3) - 3 n(\nu+3) + 4)}{(\nu+1)^2}
  \end{equation}
  and so on.
\end{example}

Using the explicit expression of the even moments of an $n$-step random walk
in dimension $d$, we derive the following convolution relation.

\begin{corollary}
  \label{cor:W:iter}{\dueto{Moment recursion}} For positive integers $n_1, n_2$, half-integer $\nu$ and nonnegative integer $k$  we have
  \begin{equation}
    W_{n_1 + n_2} ( \nu ; 2 k) = \sum_{j = 0}^k \binom{k}{j} \frac{( k + \nu)
    ! \nu !}{( k - j + \nu) ! ( j + \nu) !} W_{n_1} ( \nu ; 2 j) W_{n_2} ( \nu
    ; 2 ( k - j)) . \label{eq:W:crec}
  \end{equation}
\end{corollary}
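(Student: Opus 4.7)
The plan is to exploit the generating-function identity recorded in Remark~\ref{rk:j:D:k}, namely
\begin{equation*}
  j_{\nu}^n(x) \;=\; \nu! \sum_{k \geq 0} \frac{W_n(\nu;2k)}{k!\,(k+\nu)!} \left(-\frac{x^2}{4}\right)^k,
\end{equation*}
valid for every positive integer $n$ and every half-integer $\nu$. Because the identity $j_{\nu}^{n_1+n_2}(x) = j_{\nu}^{n_1}(x)\,j_{\nu}^{n_2}(x)$ is trivial, the claimed recursion should pop out by simply comparing coefficients.

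First I would write down the two sides. Applying the generating function with $n=n_1$ and $n=n_2$ separately and then forming the Cauchy product in the variable $-x^2/4$, one obtains
\begin{equation*}
  j_{\nu}^{n_1}(x)\,j_{\nu}^{n_2}(x) \;=\; \nu!^2 \sum_{k \geq 0} \left(-\frac{x^2}{4}\right)^{k} \sum_{j=0}^{k} \frac{W_{n_1}(\nu;2j)\,W_{n_2}(\nu;2(k-j))}{j!\,(j+\nu)!\,(k-j)!\,(k-j+\nu)!}.
\end{equation*}
Applying the generating function to $j_{\nu}^{n_1+n_2}(x)$ gives
\begin{equation*}
  j_{\nu}^{n_1+n_2}(x) \;=\; \nu! \sum_{k \geq 0} \frac{W_{n_1+n_2}(\nu;2k)}{k!\,(k+\nu)!} \left(-\frac{x^2}{4}\right)^k.
\end{equation*}

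Second, I would equate the coefficients of $\bigl(-x^2/4\bigr)^k$ in both expressions, divide through by $\nu!$, and multiply by $k!\,(k+\nu)!$. After recognizing $k!/(j!\,(k-j)!) = \binom{k}{j}$, the stated convolution identity \eqref{eq:W:crec} emerges immediately. No step presents any real obstacle: the generating-function form already carries the entire combinatorial content, and the derivation is a one-line Cauchy product. (Alternatively, one could start from the multinomial expression in Theorem~\ref{thm:W:even} and split the index set $\{1,\ldots,n_1+n_2\}$ into the first $n_1$ and last $n_2$ coordinates; this yields the same identity after reorganizing the inner multinomial coefficient, but is less economical than the generating-function route.)
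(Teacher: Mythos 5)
Your proposal is correct and amounts to the same computation the paper intends: the corollary is stated as a direct consequence of Theorem~\ref{thm:W:even} (the paper gives no written proof beyond ``Using the explicit expression of the even moments \dots{} we derive''), and comparing coefficients in $j_{\nu}^{n_1+n_2}=j_{\nu}^{n_1}\,j_{\nu}^{n_2}$ via the generating function of Remark~\ref{rk:j:D:k} is just a repackaging of splitting the multinomial sum over $k_1+\cdots+k_{n_1+n_2}=k$ into the first $n_1$ and last $n_2$ indices, as you note in your parenthetical. The only point worth making explicit is that the generating-function identity also holds for $n=1$ (where it reduces to the defining series \eqref{eq:j} since $W_1(\nu;2k)=1$), which you need since $n_1$ or $n_2$ may equal $1$.
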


Note the special case $n_2 = 1$, that is
\begin{equation}
  W_n ( \nu ; 2 k) = \sum_{j = 0}^k \binom{k}{j} \frac{( k + \nu) ! \nu !}{( k
  - j + \nu) ! ( j + \nu) !} W_{n - 1} ( \nu ; 2 j), \label{eq:W:crec1}
\end{equation}
which allows us to relate the moments of an $n$-step walk to the moments of an
$(n - 1)$-step walk.

\begin{example}\dueto{Integrality of two and four dimensional even moments}
  \label{eg:W:values4d}Corollary~\ref{cor:W:iter} provides an efficient way to
  compute even moments of random walks in any dimension. For illustration, and
  because they are integral, we record the moments of an $n$-step walk in two
  and four dimensions for $n = 2, 3, \ldots, 6$.
  \begin{eqnarray*}
    W_2 ( 0 ; 2 k) & : & 1, 2, 6, 20, 70, 252, 924, 3432, 12870, \ldots\\
    W_3 ( 0 ; 2 k) & : & 1, 3, 15, 93, 639, 4653, 35169, 272835, 2157759,
    \ldots\\
    W_4 ( 0 ; 2 k) & : & 1, 4, 28, 256, 2716, 31504, 387136, 4951552,
    65218204, \ldots\\
    W_5 ( 0 ; 2 k) & : & 1, 5, 45, 545, 7885, 127905, 2241225, 41467725,
    798562125, \ldots\\
    W_6 ( 0 ; 2 k) & : & 1, 6, 66, 996, 18306, 384156, 8848236, 218040696,
    5651108226, \ldots
  \end{eqnarray*}
  For $n = 2$, these are central binomial coefficients, see \eqref{eq:W2},
  while, for $n = 3, 4$, these are Ap\'ery-like sequences, see
  \eqref{eq:W3:d2} and \eqref{eq:W4:d2}. Likewise, the initial even moments for
  four dimensions are as follows.
  \begin{eqnarray*}
    W_2 ( 1 ; 2 k) & : & 1, 2, 5, 14, 42, 132, 429, 1430, 4862, \ldots\\
    W_3 ( 1 ; 2 k) & : & 1, 3, 12, 57, 303, 1743, 10629, 67791, 448023,
    \ldots\\
    W_4 ( 1 ; 2 k) & : & 1, 4, 22, 148, 1144, 9784, 90346, 885868, 9115276,
    \ldots\\
    W_5 ( 1 ; 2 k) & : & 1, 5, 35, 305, 3105, 35505, 444225, 5970725,
    85068365, \ldots\\
    W_6 ( 1 ; 2 k) & : & 1, 6, 51, 546, 6906, 99156, 1573011, 27045906,
    496875786, \ldots
  \end{eqnarray*}
  Observe that the first terms are as determined in Example~\ref{eg:W:smallk}.
  In the two-step case in four dimensions, we find that the even moments are
  the \emph{Catalan numbers} $C_k$, that is
  \begin{equation}
    W_2 ( 1 ; 2 k) = \frac{( 2 k + 2) !}{( k + 1) ! ( k + 2) !} = C_{k + 1}
    , \hspace{1em} C_k \assign \frac{1}{k + 1} \binom{2 k}{k} .
    \label{eq:W2:d4}
  \end{equation}
  This adds another interpretation to the impressive array of quantities that
  are given by the Catalan numbers.

  It is a special property of the random walks in two and four dimensions that
  all even moments are positive integers (compare, for instance,
  \eqref{eq:W:k4}). This is obvious for two dimensions from
  Theorem~\ref{thm:W:even} which, in fact, demonstrates that the moments
  \begin{equation*}
    W_n ( 0 ; 2 k) = \sum_{k_1 + \cdots + k_n = k} \binom{k}{k_1, \ldots,
     k_n}^2
  \end{equation*}
  count {\emph{abelian squares}} \cite{richmond-absq09}. On the other hand,
  to show that the four-dimensional moments $W_n ( 1 ; 2 k)$ are always
  integral, it suffices to recursively apply \eqref{eq:W:crec1} and to note
  that the factors
  \begin{equation}
    \binom{k}{j} \frac{( k + 1) !}{( k - j + 1) ! ( j + 1) !} = \frac{1}{j +
    1} \binom{k}{j} \binom{k + 1}{j} \label{eq:narayana}
  \end{equation}
  are integers for all nonnegative integers $j$ and $k$. The numbers
  \eqref{eq:narayana} are known as {\emph{Narayana numbers}} and occur in
  various counting problems; see, for instance,
  \cite[Problem~6.36]{stanley-ec2}.
\end{example}

\begin{example}\dueto{Narayana numbers}\label{eg:W:matpow}
  The recursion \eqref{eq:W:crec1} for the moments $W_n (\nu ; 2 k)$ is
  equivalent to the following interpretation of the moments as row sums of the
  $n$th power of certain triangular matrices. Indeed, for given $\nu$, let $A
  (\nu)$ be the infinite lower triangular matrix with entries
  \begin{equation}\label{eq:A:kj}
    A_{k, j} (\nu) = \binom{k}{j} \frac{( k + \nu) ! \nu !}{( k - j + \nu) !
     ( j + \nu) !}
  \end{equation}
  for row indices $k = 0, 1, 2, \ldots$ and column indices $j = 0, 1, 2, \ldots$. Then the row sums of $A
  (\nu)^n$ are given by the moments $W_{n + 1} (\nu ; 2 k)$, $k = 0, 1, 2,
  \ldots$. For instance, in the case $\nu = 1$,
  \begin{equation*}
    A (1) = \begin{bmatrix}
       1 & 0 & 0 & 0 & \cdots\\
       1 & 1 & 0 & 0 & \\
       1 & 3 & 1 & 0 & \\
       1 & 6 & 6 & 1 & \\
       \vdots &  &  &  & \ddots
     \end{bmatrix}, \qquad A (1)^3 = \begin{bmatrix}
       1 & 0 & 0 & 0 & \cdots\\
       3 & 1 & 0 & 0 & \\
       12 & 9 & 1 & 0 & \\
       57 & 72 & 18 & 1 & \\
       \vdots &  &  &  & \ddots
     \end{bmatrix},
  \end{equation*}
  with the row sums $1, 2, 5, 14, \ldots$ and $1, 4, 22, 148, \ldots$
  corresponding to the moments $W_2 ( 1 ; 2 k)$ and $W_4 ( 1 ; 2 k)$ as given
  in Example~\ref{eg:W:values4d}. Observe that, since the first column of $A (
  \nu)$ is composed of $1$'s, the sequence of moments $W_n ( 1 ; 2 k)$ can
  also be directly read off from the first column of $A ( \nu)^n$. The matrix
  $A (1)$ is known as the {\emph{Narayana triangle}} or the {\emph{Catalan
  triangle}} \cite[\texttt{A001263}]{sloane-oeis}.
\end{example}

\begin{remark}
  Let us note another point of view on the appearance of the Narayana triangle
  in the context of random walks. Let $\boldsymbol{X}$ be a random vector, which
  is uniformly distributed on the unit sphere in $\mathbb{R}^d$. If $\theta$
  is the angle between $\boldsymbol{X}$ and a fixed axis, then $\Lambda = \cos
  \theta$ has the probability density \cite[(2)]{kingman-rw}
  \begin{equation*}
    \frac{\nu !}{\sqrt{\pi}  (\nu - 1 / 2) !}  (1 - \lambda^2)^{\nu - 1 / 2},
     \hspace{1em} \lambda \in [- 1, 1] .
  \end{equation*}
  Denote with $R_n$ the random variable describing the distance to the origin
  after $n$ unit steps. Then $R_{n + 1}$ is related to $R_n$ via
  \cite[(9)]{kingman-rw}
  \begin{equation*}
    R_{n + 1} = \sqrt{1 + 2 \Lambda R_n + R_n^2} .
  \end{equation*}
  Writing $\mathbb{E} [X]$ for the expected value of a random variable $X$,
  we therefore have
  \begin{equation*}
    W_{n + 1} (\nu ; 2 k) =\mathbb{E} [R_{n + 1}^{2 k}] =\mathbb{E} [(1 + 2
     \Lambda R_n + R_n^2)^k] .
  \end{equation*}
  In terms of the generalized {\emph{Narayana polynomials}}
  \begin{equation}
    \mathcal{N}_k^{(\nu)} (z) =\mathbb{E} [(1 + 2 \lambda \sqrt{z} + z)^{k -
    1}], \label{eq:narayana:poly}
  \end{equation}
  which were introduced in \cite[(6.2)]{amv-narayana} for $k \geq 1$,
  we obtain
  \begin{equation*}
    W_{n + 1} (\nu ; 2 k) =\mathbb{E} [\mathcal{N}_{k + 1}^{(\nu)}
     (R_n^2)_{}] .
  \end{equation*}
  This recurrence identity on the moments can then be expressed in matrix form
  by defining a matrix $A (\nu)$ as in Example~\ref{eg:W:matpow}.

  We note, moreover, that expressing the Narayana polynomials
  \eqref{eq:narayana:poly} in terms of the Gegenbauer polynomials $C_k^{\mu}
  (z)$, as demonstrated in \cite[Theorem~6.3]{amv-narayana}, we deduce the
  expression of the even moments as
  \begin{equation*}
    W_{n + 1} (\nu ; 2 k) = \frac{k!}{(2 \nu + 1)_k} \mathbb{E} \left[ (1 -
     R_n^2)^k C_k^{\nu + 1 / 2} \left( \frac{1 + R_n^2}{1 - R_n^2} \right)
     \right],
  \end{equation*}
  which is a variation of \eqref{eq:W:crec1}.
\end{remark}

\begin{example}
  {\dueto{Six dimensional even moments}}To contrast with the integral even
  moments in 2 and 4 dimensions in Example~\ref{eg:W:values4d}, we record a
  few initial even moments in 6 dimensions.
  \begin{eqnarray*}
    W_2 ( 2 ; 2 k) & : & 1, 2, 14 / 3, 12, 33, 286 / 3, 286, 884, 8398 / 3,
    \ldots\\
    W_3 ( 2 ; 2 k) & : & 1, 3, 11, 139 / 3, 216, 1088, 5825, 32763, 191935,
    \ldots\\
    W_4 ( 2 ; 2 k) & : & 1, 4, 20, 352 / 3, 2330 / 3, 16952 / 3, 133084 / 3,
    370752, 3265208, \ldots
  \end{eqnarray*}
  It may be concluded from \eqref{eq:A:kj}, with $\nu = 2$, that the entries
  of the matrix $A (2)$ satisfy $A_{k, j} (2) \in \frac{1}{3}
  \mathbb{Z}$. This implies that the even moments in dimension 6 are rational
  numbers whose denominators are powers of $3$. Similar observations apply in
  all dimensions but we do not pursue this theme further here.
\end{example}

\section{Moments of short walks}\label{sec:mom}

\subsection{Moments of $2$-step walks}\label{ssec:mom2}

It follows from Theorem~\ref{thm:W:even} that the general expression of the
even moments for a $2$-step walk in $d$ dimensions is given by
\begin{equation}
  W_2 ( \nu ; 2 k) = \frac{\binom{2 k + 2 \nu}{k}}{\binom{k + \nu}{k}} =
  \frac{\nu ! ( 2 k + 2 \nu) !}{( k + \nu) ! ( k + 2 \nu) !} . \label{eq:W2}
\end{equation}

\begin{example}
  Note that in the special case of $\nu = 0$, that is, dimension $2$, this
  clearly reduces to the central binomial coefficient. In dimension $4$, as
  noted in Example~\ref{eg:W:values4d}, the even moments are the Catalan
  numbers. We note that the generating function for the two-step even moments
  is
  \begin{equation}
    \sum_{k = 0}^{\infty} W_2 ( \nu ; 2 k) x^k =\pFq21{1, \nu + \tfrac{1}{2}}{2 \nu + 1}{4 x}, \label{eq:W2:gf}
  \end{equation}
  which reduces to the known generating functions for $\nu = 0, 1$. Equation
  \eqref{eq:W2:gf} is an immediate consequence of rewriting \eqref{eq:W2} as
  $W_2 ( \nu ; 2 k) = 2^{2 k} \frac{( \nu + 1 / 2)_k}{( 2 \nu + 1)_k}$.
\end{example}

In fact, \eqref{eq:W2} also holds true when $k$ takes complex values. This was
proved in \cite[Theorem~4.3]{wan-phd} using a multi-dimensional integral
representation and hyper-spherical coordinates. We offer an alternative proof
based on Theorem~\ref{thm:W:bessel}.

\begin{theorem}
  \label{thm:W2}For all complex $s$ and half-integer $\nu \ge0$,
  \begin{equation*}
    W_2 ( \nu ; s) = \frac{\nu ! \Gamma ( s + 2 \nu + 1)}{\Gamma \left(
     \frac{s}{2} + \nu + 1 \right) \Gamma \left( \frac{s}{2} + 2 \nu + 1
     \right)}.
  \end{equation*}
\end{theorem}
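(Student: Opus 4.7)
The plan is to invoke Theorem~\ref{thm:W:bessel} with $n=2$ and $k=0$, which reduces $W_2(\nu;s)$ to a classical Weber--Schafheitlin integral that has a well-known closed form. Concretely, Theorem~\ref{thm:W:bessel} with $k=0$ gives
\[
  W_2(\nu;s) = \frac{2^{s+1}\,\Gamma(s/2+\nu+1)}{\Gamma(\nu+1)\,\Gamma(-s/2)} \int_0^{\infty} x^{-s-1}\, j_\nu^2(x)\,\mathd x,
\]
valid in the strip $-2\nu-1 < s < 0$ (the case $n=2$ of the hypothesis $k-n(\nu+1/2)<s<2k$). Using the definition $j_\nu(x) = \nu!\,(2/x)^\nu J_\nu(x)$ from \eqref{eq:j}, the integrand becomes $(\nu!)^2 2^{2\nu} x^{-s-2\nu-1} J_\nu^2(x)$, so everything is reduced to evaluating $\int_0^\infty x^{-\lambda} J_\nu^2(x)\,\mathd x$ with $\lambda = s+2\nu+1$.

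Next I would quote the standard Weber--Schafheitlin evaluation (a special case of Watson \cite{watson-bessel}, \S13.41),
\[
  \int_0^\infty x^{-\lambda} J_\nu^2(x)\,\mathd x = \frac{\Gamma(\lambda)\,\Gamma\!\left(\nu-\tfrac{\lambda-1}{2}\right)}{2^\lambda\,\Gamma^2\!\left(\tfrac{\lambda+1}{2}\right)\Gamma\!\left(\nu+\tfrac{\lambda+1}{2}\right)},
\]
valid for $-1 < \lambda - 2\nu < 1$ (which precisely matches the admissible strip for $s$). Specializing $\lambda = s+2\nu+1$ converts the four gamma arguments into $s+2\nu+1$, $-s/2$, $s/2+\nu+1$, and $s/2+2\nu+1$, respectively.

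Substituting this back into the expression for $W_2(\nu;s)$ is then a routine simplification: the powers of $2$ cancel, one factor of $\Gamma(s/2+\nu+1)$ cancels against $\Gamma^2(s/2+\nu+1)$ in the denominator, the $\Gamma(-s/2)$ factors cancel, and the ratio $(\nu!)^2/\Gamma(\nu+1)$ collapses to $\nu!$, leaving exactly
\[
  W_2(\nu;s) = \frac{\nu!\,\Gamma(s+2\nu+1)}{\Gamma(s/2+\nu+1)\,\Gamma(s/2+2\nu+1)}.
\]
Finally, to promote this identity from the strip of absolute convergence to all complex $s$, I would invoke analytic continuation: both sides are meromorphic in $s$ and agree on an open subset of $\mathbb{C}$, so the identity holds globally. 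There is no real obstacle here — the only point requiring slight care is checking that the hypotheses of the Weber--Schafheitlin formula match the strip in which Theorem~\ref{thm:W:bessel} is applied, which they do exactly.
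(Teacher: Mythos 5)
Your proposal is correct and follows essentially the same route as the paper: both apply Theorem~\ref{thm:W:bessel} with $n=2$, $k=0$, evaluate the resulting integral via the Weber--Schafheitlin formula for $\int_0^\infty x^{-\lambda}J_\nu^2(x)\,\mathd x$ (the paper states it as the Mellin transform \eqref{eq:j2:mellin} and tidies up with the duplication formula, which you avoid by keeping $\Gamma(\lambda)$ unduplicated), and finish by analytic continuation. The only nitpick is that the Weber--Schafheitlin formula holds for $0<\Re\lambda<2\nu+1$ rather than your stated $-1<\lambda-2\nu<1$, so the two strips merely overlap rather than "match exactly" --- which is all the analytic continuation step needs anyway.
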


\begin{proof}
  The case $k = 0$ of \eqref{eq:W:bessel} in Theorem~\ref{thm:W:bessel} gives
  \begin{equation*}
    W_n ( \nu ; s) = \frac{2^{s + 1} \Gamma \left( \frac{s}{2} + \nu + 1
     \right)}{\Gamma (\nu + 1) \Gamma \left( - \frac{s}{2} \right)}
     \int_0^{\infty} x^{- s - 1} j_{\nu}^n ( x) \mathd x,
  \end{equation*}
  provided that $- n (\nu + 1 / 2) - 1 < s < 0$. Using that, for $0 <
  \Re s < 2 \nu + 1$,
  \begin{equation}
    \int_0^{\infty} x^{s - 1} j_{\nu}^2 ( x) \mathd x = 2^{2 \nu - 1}
    \frac{\Gamma \left( \frac{s}{2} \right) \Gamma \left( \frac{1 - s}{2} +
    \nu \right) \Gamma ( 1 + \nu)^2}{\Gamma \left( \frac{1}{2} \right) \Gamma
    \left( 1 - \frac{s}{2} + \nu \right) \Gamma \left( 1 - \frac{s}{2} + 2 \nu
    \right)}, \label{eq:j2:mellin}
  \end{equation}
  the claimed formula then follows from the duplication formula for the gamma
  function and analytic continuation.
\end{proof}

\subsection{Moments of $3$-step walks}\label{ssec:mom3}

\begin{lemma}
  The nonnegative even moments for a $3$-step walk in $d$ dimensions are
  \begin{equation}
    W_3 ( \nu ; 2 k) = \sum_{j = 0}^k \binom{k}{j} \binom{k + \nu}{j} \binom{2
    j + 2 \nu}{j} \binom{j + \nu}{j}^{- 2} . \label{eq:W3}
  \end{equation}
\end{lemma}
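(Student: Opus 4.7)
The plan is to derive \eqref{eq:W3} directly from the convolution recurrence \eqref{eq:W:crec1} specialized to $n=3$, combined with the closed form \eqref{eq:W2} for the two-step moments. This avoids going back to the Bessel integral representation and reduces the lemma to a single factorial manipulation.

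Specifically, I would start from
\begin{equation*}
  W_3 ( \nu ; 2 k) = \sum_{j = 0}^k \binom{k}{j} \frac{( k + \nu) ! \nu !}{( k - j + \nu) ! ( j + \nu) !} W_2 ( \nu ; 2 j),
\end{equation*}
which is \eqref{eq:W:crec1} with $n=3$, and then substitute
\begin{equation*}
  W_2 ( \nu ; 2 j) = \frac{\nu ! ( 2 j + 2 \nu) !}{( j + \nu) ! ( j + 2 \nu) !}
\end{equation*}
from \eqref{eq:W2}. The resulting summand becomes
\begin{equation*}
  \binom{k}{j} \cdot \frac{( k + \nu) ! \, (\nu !)^2 \, ( 2 j + 2 \nu) !}{( k - j + \nu) ! \, (( j + \nu) !)^2 \, ( j + 2 \nu) !}.
\end{equation*}

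The task then is to recognize this quotient of factorials as the claimed product $\binom{k}{j}\binom{k+\nu}{j}\binom{2j+2\nu}{j}\binom{j+\nu}{j}^{-2}$. Expanding each of the four binomials on the right in factorials, the factors of $j!$ and $\nu!$ combine exactly to cancel the extras introduced, leaving precisely the expression above. This is a one-line check and is where the only real computation lies, though it is completely routine. There is no genuine obstacle: since \eqref{eq:W:crec1} and \eqref{eq:W2} are already available, the lemma follows by substitution and rearrangement.
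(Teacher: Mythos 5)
Your proposal is correct and coincides with the paper's own proof: the paper applies Corollary~\ref{cor:W:iter} with $n_1=2$, $n_2=1$ (equivalently \eqref{eq:W:crec1} with $n=3$), substitutes \eqref{eq:W2}, and rewrites the factorials as binomial coefficients. The factorial identification you describe checks out, so nothing further is needed.
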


\begin{proof}
  We apply Corollary~\ref{cor:W:iter} with $n_1 = 2$ and $n_2 = 1$. Using that
  $W_1 ( \nu ; 2 k) = 1$ and that an evaluation of $W_2 ( \nu ; 2 k)$ is given
  by \eqref{eq:W2}, we obtain
  \begin{eqnarray*}
    W_3 ( \nu ; 2 k) & = & \sum_{j = 0}^k \binom{k}{j} \frac{( k + \nu) ! \nu
    !}{( k - j + \nu) ! ( j + \nu) !} W_2 ( \nu ; 2 j)\\
    & = & \sum_{j = 0}^k \binom{k}{j} \frac{( k + \nu) ! \nu !}{( k - j +
    \nu) ! ( j + \nu) !} \frac{\nu ! ( 2 j + 2 \nu) !}{( j + \nu) ! ( j + 2
    \nu) !} .
  \end{eqnarray*}
  Expressing the factorials as binomial coefficients yields \eqref{eq:W3}.
\end{proof}

\begin{example} \dueto{Generating function for 3 steps in 2 dimensions}
  In the case $d = 2$, or $\nu = 0$, the moments of a $3$-step walk reduce to
  the Ap\'ery-like numbers
  \begin{equation}
    W_3 ( 0 ; 2 k) = \sum_{j = 0}^k \binom{k}{j}^2 \binom{2 j}{j} .
    \label{eq:W3:d2}
  \end{equation}
  \cite[(3.2) \& (3.4)]{bswz-densities} show that the generating function
  for this sequence is
  \begin{equation*}
    \sum_{k = 0}^{\infty} W_3 ( 0 ; 2 k) x^k = \frac{1}{1 + 3 x} \pFq21{\tfrac{1}{3}, \tfrac{2}{3}}{1}{\frac{27 x ( 1 - x)^2}{( 1 + 3 x)^3}} .
  \end{equation*}
\end{example}

\begin{example}\dueto{Generating function for 3 steps in 4 dimensions}
  \label{eg:W3:4d}In the case $d = 4$, or $\nu = 1$, the moments, whose
  initial values are recorded in Example~\ref{eg:W:values4d}, are sequence
  \cite[\texttt{A103370}]{sloane-oeis}. The OEIS also records a hypergeometric form of the generating
  function (as the linear combination of a hypergeometric function and its
  derivative), added by Mark van Hoeij. On using linear transformations of hypergeometric functions, we
  have more simply that
  \begin{equation*}
    \sum_{k = 0}^{\infty} W_3 ( 1 ; 2 k) x^k = \frac{1}{2 x^2} - \frac{1}{x}
     - \frac{( 1 - x)^2}{2 x^2 ( 1 + 3 x)} \pFq21{\tfrac{1}{3}, \tfrac{2}{3}}{2}{\frac{27 x ( 1 - x)^2}{( 1 + 3 x)^3}} ,
  \end{equation*}
  which we are able to generalize.
\end{example}

Example~\ref{eg:W3:4d} suggests that a nice formula for the generating function for
the moments $W_3 ( \nu ; 2 k)$ exists for all even dimensions. Indeed, we have the
following result.

\begin{theorem}\dueto{Ordinary generating function for even moments with three steps}
  \label{thm:W3:gf}For integers $\nu \geq 0$ and $|x| < 1/9$, we have
  \begin{equation}
    \sum_{k = 0}^{\infty} W_3 ( \nu ; 2 k) x^k = \frac{( - 1)^{\nu}}{\binom{2
    \nu}{\nu}}  \frac{( 1 - 1 / x)^{2 \nu}}{1 + 3 x} \pFq21{\tfrac{1}{3}, \tfrac{2}{3}}{1 + \nu}{\frac{27 x ( 1 - x)^2}{( 1 + 3 x)^3}} - q_\nu \left( \frac1 x\right),
    \label{eq:W3:gf}
  \end{equation}
  where $q_\nu( x)$ is a polynomial (that is, $q_\nu ( 1 / x)$ is the principal part
  of the hypergeometric term on the right-hand side).
\end{theorem}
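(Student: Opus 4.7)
The plan is to show that both $G_\nu(x) := \sum_{k \geq 0} W_3(\nu;2k)\,x^k$ and the Laurent series
\[
  R_\nu(x) := \frac{(-1)^\nu}{\binom{2\nu}{\nu}}\,\frac{(1-1/x)^{2\nu}}{1+3x}\,\pFq{2}{1}{\tfrac13,\,\tfrac23}{1+\nu}{Z(x)}, \qquad Z(x) := \frac{27\,x(1-x)^2}{(1+3x)^3},
\]
are annihilated by a common second-order linear differential operator $L_\nu\!\left(x,\tfrac{\mathd}{\mathd x}\right)$ with polynomial coefficients in $x$, and then to pin them down by matching the first two Taylor coefficients at $x=0$.

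For the left-hand side, the functional equation~\eqref{eq:rec3} mentioned in Example~\ref{eg:W3:poles} specialises, at $s=2k$, to a three-term P-recurrence in $k$ for $W_3(\nu;2k)$ with coefficients polynomial in $k$ and $\nu$; this recurrence translates directly into the sought operator $L_\nu$ annihilating $G_\nu(x)$. For the right-hand side, start from the hypergeometric ODE
\[
  Z(1-Z)H'' + (1+\nu - 2Z)H' - \tfrac{2}{9}H = 0
\]
satisfied by $H(Z) = \pFq{2}{1}{\tfrac13,\,\tfrac23}{1+\nu}{Z}$, and perform the algebraic substitution $Z = Z(x)$. The key simplification is the factorisation
\[
  1 - Z(x) = \frac{(1-9x)^2}{(1+3x)^3},
\]
which keeps all coefficients rational in $x$ with only the expected singular points at $x\in\{0,\tfrac19,1,-\tfrac13\}$. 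Conjugating the resulting second-order ODE by the prefactor $\tfrac{(-1)^\nu}{\binom{2\nu}{\nu}}\tfrac{(1-1/x)^{2\nu}}{1+3x}$ — which introduces a pole at $x=0$ of order $2\nu$ and a zero at $x=1$ of the same order — yields a second-order ODE annihilating $R_\nu(x)$.

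The main obstacle is to verify that this second operator agrees with $L_\nu$; this is a concrete but lengthy rational-function calculation, well within reach of computer algebra (and generalising the $\nu=0$ check performed in~\cite{bswz-densities}). Once that identification is done, observe that $(1-1/x)^{2\nu}$ contributes only finitely many negative powers of $x$ while $\tfrac{1}{1+3x}H(Z(x))$ is analytic at $x=0$, so $R_\nu(x)$ is a formal Laurent series whose principal part is a polynomial in $1/x$ of degree at most $2\nu$ — which is by definition $q_\nu(1/x)$. Hence both $G_\nu$ and $R_\nu - q_\nu(1/x)$ are formal power series annihilated by $L_\nu$, and the two-dimensional solution space of $L_\nu$ analytic at $x=0$ reduces the theorem to matching $G_\nu(0) = W_3(\nu;0) = 1$ and $G_\nu'(0) = W_3(\nu;2) = 3$ from Example~\ref{eg:W:smallk} against the corresponding Taylor coefficients of $R_\nu - q_\nu(1/x)$, which are elementary to read off from the leading terms of the hypergeometric series. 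An attractive alternative route is induction on $\nu$, combining a contiguous relation in $c$ for $\pFq{2}{1}{\tfrac13,\,\tfrac23}{c}{Z}$ with a lifting identity for $W_3(\nu;2k)$ extracted from Theorem~\ref{thm:W:even}; the base case $\nu=0$ is the planar generating function from~\cite[(3.4)]{bswz-densities}.
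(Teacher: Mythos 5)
Your overall strategy --- a common second-order operator plus matching of initial data at $x=0$ --- is a reasonable ODE-level reformulation, and the operator identification itself (pulling back the hypergeometric equation through $Z(x)$ and conjugating by the prefactor, then comparing with the operator encoded by \eqref{eq:rec3}) is a legitimate symbolic computation with $\nu$ as a parameter, parallel in spirit to the paper's verification that the Laurent coefficients of the right-hand side satisfy the recurrence \eqref{eq:W3:H:rec:k}. But there is a genuine gap at the step you describe as ``elementary to read off from the leading terms of the hypergeometric series.'' The constant term of the Laurent expansion of $R_\nu$ at $x=0$ is the coefficient of $x^{2\nu}$ in $\tfrac{(-1)^\nu}{\binom{2\nu}{\nu}}\tfrac{(x-1)^{2\nu}}{1+3x}\,\pFq21{1/3,\,2/3}{1+\nu}{Z(x)}$, which involves the first $2\nu+1$ coefficients of the ${}_2F_1$ convolved with the expansions of $Z(x)^m$. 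For each \emph{fixed} $\nu$ this is a finite check (e.g.\ for $\nu=1$ one finds $-\tfrac12(x-1)^2(1+3x)^{-1}(1+3x-3x^2+\cdots)=-\tfrac12+x+x^2+\cdots$, so the constant term of $R_1$ is $1$), but the theorem is a statement for \emph{all} integers $\nu\ge 0$, and your proposal offers no uniform argument that this coefficient equals $W_3(\nu;0)=1$. That single identity is essentially as deep as the theorem itself; it is exactly the point where the paper instead brings in the two \emph{dimensional} recurrences \eqref{eq:W3:H:rec1}--\eqref{eq:W3:H:rec2} relating $H(\nu;\cdot)$ to $H(\nu-1;\cdot)$ and inducts on $\nu$ from the known planar case. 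Your closing remark about ``induction on $\nu$ with a contiguity relation'' is in fact the paper's route, but in your write-up it is an undeveloped afterthought rather than the engine of the proof.

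A secondary, fixable inaccuracy: for $\nu>0$ the operator obtained from \eqref{eq:rec3} has indicial exponents $-2\nu$ and $-3\nu$ at $x=0$, so its space of solutions analytic at $0$ is \emph{not} two-dimensional --- it is trivial. Consequently $G_\nu(x)=\sum_k W_3(\nu;2k)x^k$ is not annihilated by the homogeneous operator but satisfies an inhomogeneous equation $L_\nu G_\nu = 6\nu^2 W_3(\nu;0)\,x^{-1}$ (the boundary term from translating the recurrence), and likewise for $R_\nu-q_\nu(1/x)$ with $W_3(\nu;0)$ replaced by the constant term of $R_\nu$. Because the coefficient $9k(k+\nu)$ in \eqref{eq:rec3} vanishes at $k=0$, only \emph{one} constant needs matching, not two --- but that one constant is precisely the quantity discussed above, so the correction does not rescue the argument; it only sharpens where the real work lies.
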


\begin{proof}
  For integers $\nu \geq 0$, define
  the rational numbers $H (\nu ; k)$ by
  \begin{equation}
    \frac{( - 1)^{\nu}}{\binom{2 \nu}{\nu}}  \frac{( 1 - 1 / x)^{2 \nu}}{1 + 3
    x} \pFq21{\tfrac{1}{3}, \tfrac{2}{3}}{1 + \nu}{\frac{27 x ( 1 - x)^2}{( 1 + 3 x)^3}} = \sum_{k = - 2
    \nu}^{\infty} H (\nu ; k) x^k \label{eq:W3:gfx}
  \end{equation}
  for $k \geq - 2 \nu$, and $H (\nu ; k) = 0$ for $k < - 2 \nu$. Writing
  the sum \eqref{eq:W3} for $W_3 ( \nu ; 2 k)$ in hypergeometric form, we
  obtain, for integers $k, \nu \geq 0$, the representation
  \begin{equation}
    W_3 ( \nu ; 2 k) =\pFq32{- k, - k - \nu, \nu + 1 / 2}{\nu + 1, 2 \nu + 1}{4} . \label{eq:W3:3F2:even}
  \end{equation}
  In order to prove the claimed generating function \eqref{eq:W3:gf} it
  therefore suffices to show the (more precise) claim
  \begin{equation}
    H (\nu ; k) = \Re \pFq32{- k, - k - \nu, \nu + 1 / 2}{\nu + 1, 2 \nu + 1}{4} . \label{eq:W3:H}
  \end{equation}
  For instance, this predicts that $q_2(x)= 1/6-5/6\,{x}+{x}^{2}+1/3\,{x}^{3}+{x}^{4}$.

  Taking the real part is only necessary for $k < - \nu$ while, for $k
  \geq - \nu$, the ${}_3 F_2$ is terminating. We note, as will be
  demonstrated later in the proof, that the right-hand side of \eqref{eq:W3:H}
  vanishes for $k < - 2 \nu$, that is, the ${}_3 F_2$ takes purely imaginary
  values then.

  The holonomic systems approach \cite{zeilberger90}, implemented in the \emph{Mathematica} package
  \texttt{Holo\-nomic\-Functions}, which accompanies Koutschan's thesis
  \cite{koutschan-phd}, shows that the coefficients $H (\nu ; k)$, defined
  by \eqref{eq:W3:gfx}, satisfy the recursive relation
  \begin{align}
    9 ( k + 1) ( k + \nu + 1) H (\nu ; k) & = \tfrac{1}{2} \left( 20 \left(
    k + \tfrac{3}{2} \right)^2 + 60 \left( k + \tfrac{3}{2} \right) \nu + 36
    \nu^2 + 1 \right) H (\nu ; k + 1) \nonumber\\
    &\quad - ( k + 2 \nu + 2) ( k + 3 \nu + 2) H (\nu ; k + 2)
    \label{eq:W3:H:rec:k}
  \end{align}
  for all integers $k$ and all integers $\nu \geq 0$. We already know
  that \eqref{eq:W3:H} holds for $\nu = 0$ and $k \geq 0$. Verifying, by
  using
  \begin{equation*}
    \pFq32{1, 1, 1 / 2}{1, 1}{4 x} = \frac{1}{\sqrt{1 - 4 x}}
  \end{equation*}
  and letting $x \rightarrow 1$ to see that the real part vanishes (for any
  choice of analytic continuation to $x = 1$), that \eqref{eq:W3:H} holds for
  $\nu = 0$ and $k = - 1$, we conclude from \eqref{eq:W3:H:rec:k} that
  \eqref{eq:W3:H} is indeed true for $\nu = 0$ and all integers $k$.

  As in the case of \eqref{eq:W3:H:rec:k}, we find that the coefficients $H
  (\nu ; k)$ further satisfy the dimensional relations
  \begin{eqnarray}
    &  & 2 (k + 2 \nu) (k + 3 \nu - 1) (k + 3 \nu) H (\nu ; k) \nonumber\\
    & = & \nu^2 (7 k + 15 \nu - 4) H (\nu - 1 ; k + 1) - 9 \nu^2 (k + \nu) H
    (\nu - 1 ; k)  \label{eq:W3:H:rec1}
  \end{eqnarray}
  and
  \begin{equation}
    2 (k + 1) (k + 3 \nu) H (\nu ; k) = \nu^2 H (\nu - 1 ; k + 2) - 3 \nu^2 H
    (\nu - 1 ; k + 1) \label{eq:W3:H:rec2}
  \end{equation}
  for all integers $k$ and all integers $\nu \geq 1$. The three
  relations \eqref{eq:W3:H:rec:k}, \eqref{eq:W3:H:rec1}, \eqref{eq:W3:H:rec2},
  together with the case $\nu = 0$ as boundary values, completely determine
  the coefficients $H (\nu ; k)$ for all integers $k$ and $\nu \geq
  1$.

  Since we already verified the case $\nu = 0$, it only remains to demonstrate
  that the right-hand side of \eqref{eq:W3:H} satisfies the same recursive
  relations. Another application of \texttt{Holo\-nomic\-Functions} finds that
  the ${}_3 F_2$ on right-hand side of \eqref{eq:W3:H}, and hence its real part,
  indeed satisfy \eqref{eq:W3:H:rec:k}, \eqref{eq:W3:H:rec1},
  \eqref{eq:W3:H:rec2} for the required (real) values of $\nu$ and $k$.
\end{proof}

For the convenience of the reader, and because we will frequently use it in
the following, we state Carlson's Theorem next
\cite[5.81]{titchmarsh-theoryoffunctions}. Recall that a function $f ( z)$
is of {\emph{exponential type}} in a region if $|f (z) | \le Me^{c |z|}$ for
some constants $M$ and $c$.

\begin{theorem}
  {\dueto{Carlson's Theorem}}\label{thm:carlson}Let $f$ be analytic in the
  right half-plane $\Re z \geq 0$ and of exponential type with the
  additional requirement that
  \begin{equation*}
    |f (z) | \leq Me^{d |z|}
  \end{equation*}
  for some $d < \pi$ on the imaginary axis $\Re z = 0$. If $f (k) = 0$
  for $k = 0, 1, 2, \ldots$, then $f (z) = 0$ identically.
\end{theorem}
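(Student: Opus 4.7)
The plan is to apply the Phragm\'en--Lindel\"of principle to the quotient $g(z) \assign f(z)/\sin(\pi z)$. Since $\sin(\pi z)$ has simple zeros at exactly the integers while $f$ is assumed to vanish at $0, 1, 2, \ldots$, the singularities of $g$ at the non-negative integers are removable, so $g$ extends to a function holomorphic throughout the closed right half-plane.

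The first step is to bound $g$ on the imaginary axis. Since $|\sin(\pi iy)| = |\sinh(\pi y)|$ grows like $\tfrac{1}{2} e^{\pi|y|}$ for large $|y|$, and by hypothesis $|f(iy)| \leq Me^{d|y|}$ with $d < \pi$, one obtains $|g(iy)| \leq C e^{-(\pi - d)|y|}$, which in particular tends to $0$ as $|y| \to \infty$. Hence $g$ is bounded on the imaginary axis by some constant $K$, with decay at infinity.

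The second step is to control $g$ in each of the two quadrants of the right half-plane by Phragm\'en--Lindel\"of. In a sector of opening angle $\pi/2$, the principle applies to holomorphic functions of growth order less than $2$, and the exponential-type bound on $f$ together with $|\sin(\pi z)| \geq \sinh(\pi |\Im z|)$ shows that $g$ has only order-$1$ growth in each quadrant. The subtlety is that $g$ is not a priori bounded on the positive real axis. The classical remedy is to introduce auxiliary multipliers of the form $e^{i(\pi - \eta)z}$ in the first quadrant and $e^{-i(\pi - \eta)z}$ in the fourth (for small $\eta > 0$), which damp any potential growth near the positive real axis while preserving the imaginary-axis bound; Phragm\'en--Lindel\"of then yields a uniform bound for the modified function throughout the quadrant, and letting $\eta \to 0^+$ transfers this bound back to $g$.

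Once $g$ is bounded in the right half-plane and decays to $0$ along the imaginary axis, a limiting Phragm\'en--Lindel\"of/harmonic-measure argument on the half-plane forces $g \equiv 0$, and hence $f \equiv 0$. The main obstacle is precisely the asymmetry between the imaginary axis, where we have strict decay with exponent margin $\pi - d > 0$, and the positive real axis, where the hypotheses give no boundedness at all---only the zeros at integers. Bridging this asymmetry via the auxiliary exponential factors, and verifying that the limit $\eta \to 0^+$ is legitimate, constitutes the technical core of the argument.
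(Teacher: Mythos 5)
First, a point of reference: the paper does not prove this statement at all --- it is quoted verbatim from Titchmarsh \cite[5.81]{titchmarsh-theoryoffunctions} as a classical tool, so there is no in-paper argument to compare against. Judged on its own terms, your sketch starts with the right object, $g(z) = f(z)/\sin(\pi z)$, and correctly identifies both the exponential decay $|g(iy)| = O(e^{-(\pi-d)|y|})$ on the imaginary axis and the real obstruction, namely that nothing bounds $g$ on the positive real axis. But the remedy you propose does not work: the multiplier $e^{i(\pi-\eta)z}$ has modulus $e^{-(\pi-\eta)\Im z}$, which equals $1$ on the positive real axis and decays only in the imaginary direction --- exactly where you already have decay to spare. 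It therefore provides no damping whatsoever where it is needed, and the Phragm\'en--Lindel\"of hypothesis of boundedness on \emph{both} edges of each quadrant is still violated. The multiplier that does tame the real axis is $e^{-Az}$ with $A$ exceeding the exponential type of $g$ (modulus $e^{-A\Re z}$, identically $1$ on the imaginary axis); but that only yields $|g(z)|\le Ce^{A\Re z}$, not boundedness, because the growth rate of $g$ along the positive real axis is genuinely unknown and cannot be removed by letting a parameter tend to $0$.

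The final step is also a non sequitur as stated: a bounded analytic function on $\Re z\ge 0$ whose boundary values tend to $0$ need not vanish (consider $1/(z+1)$). What actually closes the argument is the quantitative exponential decay: either one shows via the indicator function that $h_g(\pi/2)+h_g(-\pi/2)\le 2(d-\pi)<0$, which by trigonometric convexity across $\theta=0$ forces $h_g(0)=-\infty$ and hence $g\equiv 0$; or one invokes the fact that a nonzero function of bounded type on a half-plane must have $\int \log|g(iy)|\,(1+y^2)^{-1}\,\mathd y>-\infty$. Alternatively --- and this is how the sharp form stated here (type restricted to be less than $\pi$ only on the imaginary axis, arbitrary type in the interior) is usually proved --- one bypasses $\sin(\pi z)$ entirely and applies Carleman's formula on half-disks: the zeros at $1,2,\dots,\lfloor R\rfloor$ force the left-hand side to grow like $\log R$, while the boundary terms contribute at most $\tfrac{d}{\pi}\log R + O(1)$, a contradiction since $d<\pi$. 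As it stands, your sketch names the two genuine difficulties but resolves neither.
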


\begin{example}
  \label{eg:rec3}Applying creative telescoping to the binomial sum
  \eqref{eq:W3}, we derive that the moments $W_3 ( \nu ; 2 k)$ satisfy the
  recursion
  \begin{eqnarray}
    &  & ( k + 2 \nu + 1) ( k + 3 \nu + 1) W_3 ( \nu ; 2 k + 2) \label{eq:rec3}\\
    & = & \tfrac{1}{2} \left( 20 \left( k + \tfrac{1}{2} \right)^2 + 60
    \left( k + \tfrac{1}{2} \right) \nu + 36 \nu^2 + 1 \right) W_3 ( \nu ; 2
    k) - 9 k ( k + \nu) W_3 ( \nu ; 2 k - 2) .  \nonumber
  \end{eqnarray}
  Observe that $W_n ( \nu ; s)$ is analytic for $\Re s \geq 0$ and
  is bounded in that half-plane by $| W_n ( \nu ; s) | \leq n^{\Re s}$ (because, in any dimension, the distance after $n$ random steps is
  bounded by $n$). It therefore follows from Carlson's Theorem, as detailed in
  \cite[Theorem~4]{bnsw-rw}, that the recursion \eqref{eq:rec3} remains
  valid for complex values of $k$.
\end{example}

The next result expresses the complex moments $W_3 (\nu ; s)$ in terms of a
Meijer $G$-function and extends \cite[Theorem 2.7]{bsw-rw2}.

\begin{theorem}\dueto{Meijer $G$ form of $W_3$}
  \label{thm:W3:meijer}For all complex $s$ and dimensions $d \geq 2$,
  \begin{equation*}
    W_3 ( \nu ; s) = 2^{2 \nu} \nu !^2 \frac{\Gamma \left( \frac{s}{2} + \nu
     + 1 \right)}{\Gamma \left( \frac{1}{2} \right) \Gamma \left( -
     \frac{s}{2} \right)} \MeijerG{2, 1}{3, 3}{1, 1 + \nu, 1 + 2 \nu}{\tfrac{1}{2} + \nu, - \tfrac{s}{2}, - \tfrac{s}{2} - \nu}{\frac{1}{4}} .
  \end{equation*}
\end{theorem}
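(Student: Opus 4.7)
\medskip
\noindent\textbf{Proof proposal.} The plan is to parallel the two-dimensional case \cite[Theorem~2.7]{bsw-rw2}: reduce to a Mellin moment of $j_\nu^3$, evaluate that Mellin moment by a Mellin--Parseval convolution, identify the resulting Mellin--Barnes integral as a Meijer $G$-function after a linear change of variable, and extend by Carlson's theorem.

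First, apply Theorem~\ref{thm:W:bessel} with $k=0$ to write
\begin{equation*}
W_3(\nu;s)=\frac{2^{s+1}\Gamma\!\left(\tfrac{s}{2}+\nu+1\right)}{\Gamma(\nu+1)\,\Gamma(-s/2)}\int_0^\infty x^{-s-1}j_\nu^3(x)\,\mathd x,
\end{equation*}
valid in the strip $-3(\nu+1/2)<\Re s<0$. Second, decompose $j_\nu^3=j_\nu\cdot j_\nu^2$ and apply the Mellin convolution theorem, using the Mellin transform of $j_\nu^2$ recorded in \eqref{eq:j2:mellin} together with the elementary companion
\begin{equation*}
\int_0^\infty t^{w-1}j_\nu(t)\,\mathd t=\nu!\cdot\frac{2^{w-1}\Gamma(w/2)}{\Gamma(\nu-w/2+1)},
\end{equation*}
which follows from the standard Bessel evaluation $\int_0^\infty t^{a-1}J_\nu(t)\mathd t=2^{a-1}\Gamma((a+\nu)/2)/\Gamma((\nu-a)/2+1)$ after the normalization \eqref{eq:j}. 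Substituting $u=w/2$ in the resulting Mellin--Barnes integral yields
\begin{equation*}
\int_0^\infty x^{-s-1}j_\nu^3(x)\,\mathd x=\frac{\nu!^3\,2^{2\nu-1}}{\Gamma(1/2)}\cdot\frac{1}{2\pi i}\int_L\frac{\Gamma(u)\Gamma(-\tfrac{s}{2}-u)\Gamma(\tfrac{1}{2}+\nu+\tfrac{s}{2}+u)}{\Gamma(1+\nu-u)\Gamma(1+\nu+\tfrac{s}{2}+u)\Gamma(1+2\nu+\tfrac{s}{2}+u)}\,4^u\,\mathd u.
\end{equation*}

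Third, and this is the key manipulation, the shift $t=-u-s/2$ converts the integrand into exactly the defining Mellin--Barnes integrand of the Meijer $G$-function: the factor $4^u$ becomes $2^{-s}(1/4)^t$, while the six Gamma factors align, using $1+\nu+s/2+u=1+\nu-t$, with the pattern
\begin{equation*}
\frac{\Gamma(\tfrac{1}{2}+\nu-t)\Gamma(-\tfrac{s}{2}-t)\Gamma(t)}{\Gamma(1+\tfrac{s}{2}+\nu+t)\Gamma(1+\nu-t)\Gamma(1+2\nu-t)}\,(1/4)^t
\end{equation*}
dictated by the parameters $(a_1,a_2,a_3)=(1,1+\nu,1+2\nu)$ and $(b_1,b_2,b_3)=(1/2+\nu,-s/2,-s/2-\nu)$. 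Combining constants (and using $\Gamma(\nu+1)=\nu!$) then produces the stated formula.

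Finally, to promote the identity from the strip of absolute convergence to all complex $s$, observe that $W_3(\nu;s)$ is analytic and of admissible exponential growth in the right half-plane $\Re s\ge 0$ (indeed $|W_3(\nu;s)|\le 3^{\Re s}$ because the distance after three unit steps is bounded by $3$), and the same is true of the right-hand side after we clear the pole factors of $\Gamma(-s/2)$. Carlson's theorem (Theorem~\ref{thm:carlson}) then yields equality everywhere. The main obstacle is the bookkeeping in the change of variable $t=-u-s/2$: one has to verify that the vertical Mellin--Parseval contour can be shifted into the standard Meijer $G$-contour without crossing poles, which is guaranteed by the strip $k-n(\nu+1/2)<\Re s<2k$ from Theorem~\ref{thm:W:bessel} and the pole locations of the six Gamma factors.
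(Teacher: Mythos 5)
Your proposal is correct and follows essentially the same route as the paper: reduce to the Mellin moment of $j_\nu^3$ via Theorem~\ref{thm:W:bessel} with $k=0$, apply Mellin--Parseval to the factorization $j_\nu\cdot j_\nu^2$ using \eqref{eq:j2:mellin} and the Mellin transform of $j_\nu$, and recognize the resulting Mellin--Barnes integral as the stated $G^{2,1}_{3,3}$. The only quibble is the final appeal to Carlson's theorem, which is both unnecessary and not quite applicable here (it requires agreement at the nonnegative integers, which do not lie in your strip); since the identity is established on an open vertical strip, ordinary analytic continuation of meromorphic functions --- as the paper invokes --- already gives the result for all complex $s$.
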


\begin{proof}
  If $0 < \Re ( s) < \nu + \frac{3}{2}$, then
  \begin{equation*}
    \int_0^{\infty} x^{s - 1} j_{\nu} ( x) \mathd x = 2^{s - 1} \frac{\Gamma
     \left( \frac{s}{2} \right) \Gamma ( 1 + \nu)}{\Gamma \left( 1 -
     \frac{s}{2} + \nu \right)} .
  \end{equation*}
  This simple integral is a consequence of the fact that $W_1 (\nu ; s) = 1$
  combined with Theorem~\ref{thm:W:bessel} with $n = 1$ and $k = 0$.
  Similarly, for $0 < \Re ( s) < 2 \nu + 1$, the Mellin transform of
  $j_{\nu}^2 (x)$ is given by \eqref{eq:j2:mellin}. Applying Parseval's
  formula to these two Mellin transforms, we obtain, for $0 < \delta < 1$,
  \begin{eqnarray*}
    &  & \int_0^{\infty} x^{s - 1} j_{\nu}^3 ( x) \mathd x\\
    & = & \frac{1}{2 \pi i} \int_{\delta - i \infty}^{\delta + i \infty} 2^{2
    \nu - 1} \frac{\Gamma \left( \frac{z}{2} \right) \Gamma \left( \frac{1 -
    z}{2} + \nu \right) \Gamma ( 1 + \nu)^2}{\Gamma \left( \frac{1}{2} \right)
    \Gamma \left( 1 - \frac{z}{2} + \nu \right) \Gamma \left( 1 - \frac{z}{2}
    + 2 \nu \right)} 2^{s - z - 1} \frac{\Gamma \left( \frac{s - z}{2} \right)
    \Gamma ( 1 + \nu)}{\Gamma \left( 1 - \frac{s - z}{2} + \nu \right)} \mathd
    z\\
    & = & \frac{2^{2 \nu + s - 1} \Gamma ( 1 + \nu)^3}{\Gamma \left(
    \frac{1}{2} \right)}  \frac{1}{2 \pi i} \int_{\delta / 2 - i
    \infty}^{\delta / 2 + i \infty} \frac{2^{- 2 t} \Gamma ( t) \Gamma \left(
    \frac{1}{2} - t + \nu \right)}{\Gamma ( 1 - t + \nu) \Gamma ( 1 - t + 2
    \nu)} \frac{\Gamma \left( \frac{s}{2} - t \right)}{\Gamma \left( 1 -
    \frac{s}{2} + t + \nu \right)} \mathd t\\
    & = & \frac{2^{2 \nu + s - 1} \nu !^3}{\Gamma \left( \frac{1}{2} \right)}
    \MeijerG{2, 1}{3, 3}{1, 1 + \nu, 1 + 2 \nu}{\tfrac{1}{2} + \nu, \tfrac{s}{2}, \tfrac{s}{2} - \nu}{\frac{1}{4}} .
  \end{eqnarray*}
  The claim then follows, by analytic continuation, from
  Theorem~\ref{thm:W:bessel} with $n = 3$ and $k = 0$.
\end{proof}

We note that, as in \cite{bsw-rw2}, this Meijer $G$-function expression can
be expressed as a sum of hypergeometric functions by Slater's Theorem
\cite[p.~57]{mar}. This is made explicit in \eqref{eq:W3:meijer:hyp}.

Equation~\eqref{eq:W3:3F2:even} gives a hypergeometric expression for the even
moments of a $3$-step random walk. It was noticed in \cite{bnsw-rw} that, in
the case of planar walks, the {\emph{real part}} of this hypergeometric
expression provides an evaluation of the odd moments. These odd moments are
much harder to obtain, and it was first proved in \cite{bnsw-rw}, based on
this observation, that the average distance of a planar $3$-step random walk
is
\begin{equation}
  W_3 (0 ; 1) = A + \frac{6}{\pi^2}  \frac{1}{A} \approx 1.5746,
  \label{eq:W3:0:1}
\end{equation}
where
\begin{equation}
  W_3 (0 ; - 1) = \frac{3}{16}  \frac{2^{1 / 3}}{\pi^4} \Gamma^6 \left(
  \tfrac{1}{3} \right) = : A. \label{eq:W3:0:-1}
\end{equation}
In the sequel, we generalize these results from two to arbitrary even
dimensions. In particular, as explained in Example~\ref{eg:W3:odd}, we
establish the transcendental nature of the odd moments of $3$-step walks in
all even dimensions by showing that they are all rational linear combinations
of $A$ and $1 / (\pi^2 A)$.

\begin{theorem}\dueto{Hypergeometric form of $W_3$ at odd integers}
  \label{thm:W3:Re}Suppose that $d$ is even, that is, $\nu$ is an integer. For
  all odd integers $s \geq - 2 \nu - 1$,
  \begin{equation*}
    W_3 ( \nu ; s) = \Re \pFq32{- s / 2, - s / 2 - \nu, \nu + 1 / 2}{\nu + 1, 2 \nu + 1}{4} .
  \end{equation*}
\end{theorem}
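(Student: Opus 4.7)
The plan is to prove the identity by showing that both sides satisfy the same three-term recurrence in $s$ and invoking Carlson's theorem (Theorem~\ref{thm:carlson}) to bootstrap from the already-established even-$s$ agreement to all $s$. Write
\[
F(\nu; s) := \Re \pFq32{-s/2,\,-s/2-\nu,\,\nu+1/2}{\nu+1,\,2\nu+1}{4},
\]
with the ${}_3F_2$ interpreted via its analytic continuation to the argument $4$ (from the upper half-plane, say).

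First, by Example~\ref{eg:rec3}, the moment $W_3(\nu;s)$ satisfies the recurrence \eqref{eq:rec3} for all complex $s$ (Carlson applied to the integer-$s$ recurrence via $|W_3(\nu;s)|\le 3^{\Re s}$). The same recurrence in $s$ is satisfied by $F(\nu;s)$: the holonomic computation already carried out in the proof of Theorem~\ref{thm:W3:gf}, yielding \eqref{eq:W3:H:rec:k}, is a contiguous relation for ${}_3F_2$ that is an identity in the shift parameter --- hence it remains valid when the integer $k$ is replaced by the complex parameter $s/2$, and it is stable under taking real parts. Moreover, Theorem~\ref{thm:W3:gf} (in the form \eqref{eq:W3:H}) supplies $W_3(\nu;2k) = F(\nu;2k)$ at every nonnegative integer $k$.

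Now form $\Delta(z) := W_3(\nu;2z) - F(\nu;2z)$. This is analytic in a right half-plane of $z$ (to the right of the first pole of $W_3$) and vanishes at every $z \in \{0,1,2,\ldots\}$. To apply Carlson's theorem and conclude $\Delta \equiv 0$, one needs $\Delta$ to be of exponential type with imaginary-axis rate strictly less than $\pi$. The bound for $W_3(\nu;2it)$ is immediate from $|W_3(\nu;s)|\le 3^{\Re s}$. For $F(\nu;2it)$ one uses the Barnes--Mellin representation encoded in the Meijer-$G$ form of Theorem~\ref{thm:W3:meijer} together with the Slater expansion \eqref{eq:W3:meijer:hyp} to bound the analytically continued ${}_3F_2$ by $e^{c|t|}$ with $c<\pi$, via a saddle-point estimate on the Barnes contour. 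Carlson then yields $W_3(\nu;s) = F(\nu;s)$ throughout the half-plane and in particular at every odd integer $s \ge -2\nu-1$.

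The main obstacle is the exponential-type bound for the analytically continued ${}_3F_2$ on the imaginary axis; this is standard but technical. A cleaner alternative, which bypasses Carlson, is to verify the identity by hand at two consecutive base values $s = -2\nu-1$ and $s = -2\nu+1$ using the Slater expansion \eqref{eq:W3:meijer:hyp}: at odd $s$, the hypergeometric summands with half-integer top parameters pair into complex conjugates whose imaginary parts cancel, while the surviving real contributions collapse, after reducing the half-integer $\Gamma$ prefactors via the reflection formula, to $F(\nu;s)$. The three-term recurrence of the previous paragraph then propagates these two base values to all odd $s \ge -2\nu-1$, in direct parallel with the planar argument of \cite{bnsw-rw}.
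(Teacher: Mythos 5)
Your strategy is genuinely different from the paper's, and it has a gap at its core. The paper fixes the number of steps and inducts on the \emph{dimension}: it shows that $W_3(\nu;s)$ (via the Meijer~$G$/Slater form \eqref{eq:W3:meijer:hyp} and the holonomic systems approach) and the hypergeometric function $F(\nu;s)$ satisfy the same dimensional contiguity relation \eqref{eq:W3:rec:nu:s} linking $\nu$ to $\nu-1$, and then anchors the induction at the known planar case $\nu=0$ from \cite{bnsw-rw}. You instead fix $\nu$ and propagate in $s$, which forces you to supply fresh analytic input for every single $\nu$ --- and that input is precisely what is missing.

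Concretely: (i) in your primary route, Carlson's Theorem~\ref{thm:carlson} cannot be applied to $W_3(\nu;2z)-\Re f(z)$ as written, because the real part of an analytic function is not analytic in $z$; you must first replace $\Re f$ by the symmetrization $\tilde f(z)=\tfrac12\bigl(f(z)+\overline{f(\bar z)}\bigr)$. More seriously, the exponential-type bound with constant $<\pi$ on the imaginary axis, which you defer as ``standard but technical'', is the entire difficulty: the unsymmetrized $f$ provably violates the hypotheses of Carlson's theorem (otherwise Carlson applied to $W_3(\nu;2z)-f(z)$ would force $f$ to be real on the positive real axis, making the ``$\Re$'' in the statement superfluous at odd $s$, which it is not), so your argument rests on an exact cancellation of $e^{\pi|t|}$ growth between $f(it)$ and $\overline{f(-it)}$ that you have not established. (ii) In your alternative route, the recursion \eqref{eq:rec3} in $s$ needs two consecutive odd base values $s=-2\nu-1$ and $s=-2\nu+1$ \emph{for each} $\nu$; these are not supplied by \cite{bnsw-rw}, which only covers $\nu=0$, and your sketch of how the Slater expansion ``collapses'' to $F(\nu;s)$ at those points is an assertion rather than a proof --- in the planar case the corresponding base evaluation $W_3(0;-1)$ was itself a substantial result requiring the $\Gamma(1/3)^6$ evaluation \eqref{eq:W3:0:-1}. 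Either route, carried out honestly, contains the real content of the theorem; the paper's dimensional induction is what allows the already-established planar case to do all of the hard analytic work.
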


\begin{proof}
  The case $\nu = 0$ is proved in \cite[Theorem~6]{bnsw-rw}. We will prove
  the general case by induction on $\nu$.

  It is routine to verify that the hypergeometric function
  \begin{equation*}
    F ( \nu ; s) =\pFq32{- s / 2, - s / 2 - \nu, \nu + 1 / 2}{\nu + 1, 2 \nu + 1}{4},
  \end{equation*}
  which, for even $s$, agrees with $W_3 ( \nu ; s)$ by
  \eqref{eq:W3:3F2:even}, satisfies the contiguity relation
  \begin{equation*}
    ( s + 1) ( s + 6 \nu - 1) F ( \nu ; s - 1) + 6 \nu^2 F ( \nu - 1 ; s + 1)
     - 2 \nu^2 F ( \nu - 1 ; s + 3) = 0.
  \end{equation*}
  On the other hand, it follows from Theorem~\ref{thm:W3:meijer} and Slater's
  Theorem \cite[p.~57]{mar} that
  \begin{eqnarray}
    W_3 ( \nu ; s) & = & \frac{\Gamma \left( - \nu - \tfrac{s + 1}{2} \right)
    \Gamma ( \nu + 1)^2 \Gamma \left( \nu + \tfrac{s}{2} + 1 \right)}{2 \pi
    \Gamma \left( - \tfrac{s}{2} \right) \Gamma \left( 2 \nu + \tfrac{s +
    3}{2} \right)} \pFq32{\tfrac{1}{2}, \tfrac{1}{2} - \nu, \tfrac{1}{2} + \nu}{\nu + \tfrac{s + 3}{2}, 2 \nu + \tfrac{s + 3}{2}}{\frac{1}{4}} \nonumber\\
    &  & + \frac{2^{s + 2 \nu} \Gamma ( \nu + 1) \Gamma \left( \nu + \tfrac{s
    + 1}{2} \right)}{\sqrt{\pi} \Gamma \left( 2 \nu + \tfrac{s}{2} + 1
    \right)} \pFq32{- \tfrac{s}{2}, - \tfrac{s}{2} - \nu, - \tfrac{s}{2} - 2 \nu}{\nu + 1, - \nu - \tfrac{s - 1}{2}}{\frac{1}{4}} .  \label{eq:W3:meijer:hyp}
  \end{eqnarray}
  In that form, it is again a routine application of the holonomic systems
  approach \cite{zeilberger90} to derive that
  \begin{eqnarray}
    0 & = & ( s + 1) ( s + 6 \nu - 1) W_3 ( \nu ; s - 1) \nonumber\\
    &  & + 6 \nu^2 W_3 ( \nu - 1 ; s + 1) - 2 \nu^2 W_3 ( \nu - 1 ; s + 3) .
    \label{eq:W3:rec:nu:s}
  \end{eqnarray}
  Since this relation matches the relation satisfied by $F (\nu ; s)$, and
  hence $\Re F (\nu ; s)$ when $\nu$ and $s$ are real, the general case
  follows inductively from the base case $\nu = 0$.
\end{proof}

\begin{remark}
  \label{rk:W3:meijer:hyp}The coefficients of the hypergeometric functions in
  \eqref{eq:W3:meijer:hyp} can be expressed as
  \begin{equation*}
    c_1 \assign \frac{\tau (\nu ; s)}{\pi}  \frac{2^{2 \nu - 1}}{( 2 \nu + 1)
     \binom{2 \nu}{\nu}}  \frac{\binom{2 \nu + s + 1}{\nu + (s + 1) /
     2}}{\binom{s + 2 \nu}{s / 2} \hspace{0.25em} \binom{4 \nu + s + 1}{2 \nu
     + 1}}
  \end{equation*}
  and $c_2 \assign \binom{2 \nu + s}{\nu + s / 2} / \binom{2 \nu + s / 2}{\nu}$,
  respectively. Here, the factor
  \begin{equation*}
    \tau (\nu ; s) = \frac{1}{\cos ( \pi \nu) \cot ( \pi s / 2) - \sin (\pi
     \nu)}
  \end{equation*}
  is $\pm 1$ for half-integers $\nu$, and $\pm \tan ( \frac{\pi s}{2})$ for
  integers $\nu$.
\end{remark}

\begin{example}\dueto{Odd moments $W_3(\nu;\cdot)$ in even dimensions}
  \label{eg:W3:odd}The planar case of Theorem~\ref{thm:W3:Re} was used in
  \cite{bnsw-rw} to prove that the average distance to the origin after three random
  steps in the plane is given by \eqref{eq:W3:0:1}. It is a consequence of
  \eqref{eq:rec3}, extended to complex $k$, that all planar odd moments are
  $\mathbb{Q}$-linear combinations of $A = \frac{3}{16}  \frac{2^{1 /
  3}}{\pi^4} \Gamma^6 ( 1 / 3)$, defined in \eqref{eq:W3:0:-1}, and $1 / (
  \pi^2 A)$.

  The dimensional recursion \eqref{eq:W3:rec:nu:s} used in the
  proof of Theorem~\ref{thm:W3:Re} shows that this observation extends to all
  even dimensions. For instance,
  \begin{equation*}
    W_3 (1 ; - 3) = \frac{4}{3} A - \frac{4}{\pi^2}  \frac{1}{A},
     \hspace{1em} W_3 (1 ; - 1) = \frac{4}{15} A + \frac{4}{\pi^2}
     \frac{1}{A} .
  \end{equation*}
  Moreover, the average distance to the origin after three random steps in four
  dimensions is
  \begin{equation*}
    W_3 (1 ; 1) = \frac{476}{525} A + \frac{52}{7 \pi^2}  \frac{1}{A} \approx
     1.6524,
  \end{equation*}
  with similar evaluations in six or higher even dimensions.
\end{example}

\begin{example}
  Theorem~\ref{thm:W3:Re} does not hold in odd dimensions, in which the
  involved quantities can be evaluated in elementary terms. For instance, in
  the case of dimension $3$,
  \begin{equation*}
    W_3 ( \tfrac{1}{2} ; s) = \frac{1}{4}  \frac{3^{s + 3} - 3}{(s + 2) (s +
     3)}
  \end{equation*}
  while for integer $s>0$
  \begin{equation*}
    \pFq32{- s / 2, - s / 2 - 1 / 2, 1}{3 / 2, 2}{4} = \frac{1}{4}  \frac{3^{s + 3} - 2 - (- 1)^s}{(s
     + 2) (s + 3)}
  \end{equation*}
  only agrees for even $s$.
\end{example}

\begin{example}\dueto{First derivative of $W_3(\nu;\cdot)$ in even dimensions}
  \label{eg:W3:der}By differentiating the hypergeometric representation of
  $W_3 (\nu ; s)$ in \eqref{eq:W3:meijer:hyp}, it was shown in \cite[Examples
  6.2 and 6.6]{bswz-densities} that
  \begin{equation*}
    W_3' (0 ; 0) = \frac{1}{\pi} \operatorname{Cl} \left( \frac{\pi}{3} \right),
     \hspace{1em} W_3' (0 ; 2) = 2 + \frac{3}{\pi} \operatorname{Cl} \left(
     \frac{\pi}{3} \right) - \frac{3 \sqrt{3}}{2 \pi},
  \end{equation*}
  where the derivatives are with respect to $s$. It follows from
  differentiating \eqref{eq:rec3}, extended to complex $k$, that all
  derivatives $W_3' (0 ; 2 k)$ lie in the $\mathbb{Q}$-linear span of $1$,
  $\frac{1}{\pi} \operatorname{Cl} ( \frac{\pi}{3})$ and $\frac{\sqrt{3}}{\pi}$. Then,
  differentiating \eqref{eq:W3:rec:nu:s}, we find that, indeed, for all
  integers $\nu \geq 0$, the derivatives $W_3' (\nu ; 2 k)$ can likewise
  be expressed as
  \begin{equation*}
    W_3' (\nu ; 2 k) = r_1 + r_2 \frac{\sqrt{3}}{\pi} + r_3 \frac{1}{\pi}
     \operatorname{Cl} ( \frac{\pi}{3}),
  \end{equation*}
  with rational numbers $r_1, r_2, r_3$.

  Moreover, in the case of $W_3' (\nu ; 0)$, a slightly more careful analysis
  reveals that $r_3 = 1$. While we omit the details, we note that this can be
  seen, for instance, by evaluating $W_3' (1 ; 0)$ and then deriving, in
  analogy with \eqref{eq:W3:rec:nu:s}, a functional equation relating $W_3
  (\nu ; s)$, $W_3 (\nu + 1 ; s)$ and $W_3 (\nu + 2 ; s)$. In four and six
  dimensions, we obtain, for example,
  \begin{equation*}
    W_3' (1 ; 0) = \frac{1}{2} - \frac{11 \sqrt{3}}{16 \pi} + \frac{1}{\pi}
     \operatorname{Cl} \left( \frac{\pi}{3} \right), \hspace{1em} W_3' (2 ; 0) =
     \frac{17}{36} - \frac{181 \sqrt{3}}{320 \pi} + \frac{1}{\pi} \operatorname{Cl}
     \left( \frac{\pi}{3} \right) .
  \end{equation*}
  A special motivation for considering these derivative values is that, in the
  case of two dimensions, $W_3' (0 ; 0)$ is the (logarithmic) Mahler measure
  of the multivariate polynomial $1 + x_1 + x_2$; see \cite[Example
  6.6]{bswz-densities} or \cite[Section~4]{logsin1}.
\end{example}

\begin{example}\dueto{Second derivative of $W_3(\nu;\cdot)$ in even dimensions}
  The second derivative $W_3'' ( 0 ; 0)$, interpreted there as a higher Mahler
  measure, is evaluated in \cite[Theorem~4.4]{logsin1} in the form
    \begin{equation}
    W_3'' ( 0 ; 0) = \frac{\pi^2}{4} + \frac{3}{\pi} \operatorname{Ls}_3 \left(
    \frac{2 \pi}{3} \right), \label{eq:W3dd00}
  \end{equation}
  where $\operatorname{Ls}_n$ denotes the $n$th {\emph{log-sine integral}}
  \begin{equation*}
    \operatorname{Ls}_n ( \sigma) \assign - \int_0^{\sigma} \log^{n - 1} \left| 2
     \sin \frac{\theta}{2} \right| \mathd \theta .
  \end{equation*}
  For alternative expressions of the log-sine integral in \eqref{eq:W3dd00} in
  terms of other polylogarithmic constants, we refer to \cite{logsin1}. 
  We have not been able to obtain an equally natural log-sine evaluation of
\begin{equation}
  W_3'' (0 ;2k)=\frac{1}{16 \pi^2}\int_0^{2\pi}\int_0^{2\pi}|1+e^{i\omega}+e^{i\theta}|^{2k}\log^2 |1+e^{i\omega}+e^{i\theta}| \,\mathd \theta \mathd\omega,
\end{equation}
for integers $k>0$. We may, however, derive
\begin{equation}
  W_3'' (0 ;2)=3\,W_3'' (0 ;0)+ \frac{3}{ \pi^2}\int_0^\pi \cos \omega \,\Re\int_0^{2\pi}\log^2 \left(1-2\sin \left(\frac \omega2\right) e^{i\theta}\right) \,\mathd \theta\mathd\omega.
\end{equation}
By the methods of \cite[\S4]{logsin1}, we arrive at
\begin{equation}
  W_3'' (0 ;2)=3\,W_3'' (0 ;0)+\frac{\sqrt {3}}2\,\pi-\frac32-\frac{6}\pi\,\int_{0}^{\pi /3}\!{\rm Li
}_2 \left( 4\,  \sin^2 \left( \frac \omega 2 \right)   
 \right)  \cos \omega\,{\mathd}\omega.
\end{equation}
We may now integrate by parts and obtain
\begin{equation}
  W_3'' (0 ;2)=3\,W_3'' (0 ;0)-1-{\frac {2\sqrt {3}}{\pi }\int_{0}^{1}\!\sqrt {{\frac {1+\frac s3}{1-s}}}
\log   s \,\mathd s}.
\end{equation}
Moreover,
\begin{equation}
\int_{0}^{1}\!\sqrt {\frac {1+\frac s3}{1-s}}
\log   s \,\mathd s = \sum_{n=0}^\infty \frac {a_n}{3^n\,n^2}
\end{equation}
where $a_n$ is given by \cite[\texttt{A025565}]{sloane-oeis} of the OEIS and counts ``the number of number of UDU-free paths of $n-1$ upsteps (U) and $n-1$ downsteps (D)" with recursion
\[(n-1)a_n-2(n-1)a_{n-1}-3(n-3)a_{n-2}=0.\]
(Alternatively, $a_n = M_{n-1} + \sum_{k=1}^{n-1} M_{k-1}a_{n-k}$ with $M_k$ the \emph{Motzkin numbers} given in \texttt{A001006}.) Solving for the generating function $a_3(x)$ of $a_n$ and considering $\int_0^{1/2}\int_0^t a_3(x) \mathd x\frac{\mathd t}t$ we finally arrive at
\begin{equation}W_3'' (0 ;2)=3\,W_3'' (0 ;0)-3\,\frac {\sqrt {3}}{\pi } (\log 3-1)-\frac12+\frac 4 {\pi }{ \operatorname{Cl}
     \left( \frac{\pi}{3} \right)}
.\end{equation}
  As in Example~\ref{eg:W3:der}, all second derivatives $W_3'' ( \nu
  ; 2 k)$ in even dimensions may then be expressed in terms of $W_3'' ( 0 ;
  0)$ and $W_3'' ( 0 ; 2)$ as well as the constants in
  Example~\ref{eg:W3:der}. For instance,
  \begin{equation*}
    W_3'' (1 ; 0) = - \frac{3}{8} W_3'' (0 ; 0) + \frac{11}{24} W_3'' (0 ; 2)
     - \frac{3}{8} + \frac{23 \sqrt{3}}{48 \pi} - \frac{5}{6 \pi} \operatorname{Cl}
     \left( \frac{\pi}{3} \right) ,
  \end{equation*}
  and so on.
\end{example}

\subsection{Moments of $4$-step walks}\label{ssec:mom4}

\begin{lemma}
  The nonnegative even moments for a $4$-step walk in $d$ dimensions are
  \begin{equation}
    W_4 ( \nu ; 2 k) = \sum_{j = 0}^k \frac{\binom{k}{j} \binom{k +
    \nu}{j}}{\binom{j + \nu}{j}} \frac{\binom{2 j + 2 \nu}{j}}{\binom{j +
    \nu}{j}} \frac{\binom{2 ( k - j) + 2 \nu}{k - j}}{\binom{k - j + \nu}{k -
    j}} . \label{eq:W4}
  \end{equation}
\end{lemma}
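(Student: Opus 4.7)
My plan is to imitate the proof of the preceding $3$-step lemma, but applied with $n_1 = n_2 = 2$ in Corollary~\ref{cor:W:iter}. This yields immediately
\begin{equation*}
  W_4(\nu; 2k) = \sum_{j=0}^k \binom{k}{j}\,\frac{(k+\nu)!\,\nu!}{(k-j+\nu)!\,(j+\nu)!}\, W_2(\nu; 2j)\, W_2(\nu; 2(k-j)).
\end{equation*}
Unlike the $3$-step case, where the trivial factor $W_1(\nu; 2j) = 1$ collapsed one variable, here both $W_2$ factors are nontrivial, so the result is genuinely a one-parameter sum rather than the evaluation of a closed form.

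Next, I would substitute the closed form \eqref{eq:W2}, namely
\begin{equation*}
  W_2(\nu; 2m) = \frac{\nu!\,(2m+2\nu)!}{(m+\nu)!\,(m+2\nu)!},
\end{equation*}
for $m = j$ and for $m = k-j$. The remaining step is a routine translation of the three resulting gamma-quotients into the binomial form appearing in the claim. Concretely, one checks the two elementary identities
\begin{equation*}
  \binom{k}{j}\frac{(k+\nu)!\,\nu!}{(k-j+\nu)!\,(j+\nu)!} = \frac{\binom{k}{j}\binom{k+\nu}{j}}{\binom{j+\nu}{j}}, \qquad \frac{\nu!\,(2m+2\nu)!}{(m+\nu)!\,(m+2\nu)!} = \frac{\binom{2m+2\nu}{m}}{\binom{m+\nu}{m}},
\end{equation*}
the second used for $m = j$ and $m = k-j$. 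These follow at once by writing out the factorials, and together they reassemble the summand into exactly the product displayed in \eqref{eq:W4}.

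The proof is entirely algebraic and I do not expect any real obstacle; the only thing worth pausing over is making sure the combinatorial weight coming from the recursion matches the first of the two binomial identities above, which is simply the observation that $\binom{k+\nu}{j}/\binom{j+\nu}{j} = (k+\nu)!\,\nu!/[(k-j+\nu)!\,(j+\nu)!]$. Note that this representation makes manifest the symmetry of the summand under $j \leftrightarrow k-j$ after the factor $\binom{k}{j}\binom{k+\nu}{j}/\binom{j+\nu}{j}$ is itself rewritten symmetrically, which provides a useful sanity check on the answer.
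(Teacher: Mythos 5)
Your proposal is correct and follows exactly the paper's route: apply Corollary~\ref{cor:W:iter} with $n_1 = n_2 = 2$, insert the closed form \eqref{eq:W2} for both factors, and rewrite the factorials as binomial coefficients. The two elementary identities you check are precisely the bookkeeping the paper leaves implicit, so there is nothing to add.
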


\begin{proof}
  We apply Corollary~\ref{cor:W:iter} with $n_1 = 2$ and $n_2 = 2$, to obtain
  \begin{equation*}
    W_4 ( \nu ; 2 k) = \sum_{j = 0}^k \frac{\binom{k}{j} \binom{k +
     \nu}{j}}{\binom{j + \nu}{j}} W_2 ( \nu ; 2 j) W_2 ( \nu ; 2 ( k - j)) .
  \end{equation*}
  Using the evaluation of $W_2 ( \nu ; 2 k)$ given by \eqref{eq:W2} then
  yields \eqref{eq:W4}.
\end{proof}

\begin{example}\dueto{Generalised Domb numbers}
  \label{eg:domb} The binomial sums in \eqref{eq:W4} generalize the {\emph{Domb
  numbers}}, also known as the \emph{diamond lattice} numbers \cite[\texttt{A002895}]{sloane-oeis},
  \begin{equation}
    W_4 ( 0 ; 2 k) = \sum_{j = 0}^k \binom{k}{j}^2 \binom{2 j}{j} \binom{2 ( k
    - j)}{k - j}, \label{eq:W4:d2}
  \end{equation}
  for $k=0,1,2,\ldots$, which have played an important role in dimension $2$. Their ordinary generating
  function,
  \begin{align}
    \sum_{k = 0}^{\infty} W_4 ( 0 ; 2 k) x^k &= \frac{1}{1 - 16 x} \pFq32{\frac{1}{3}, \frac{1}{2}, \frac{2}{3}}{1, 1}{\frac{108 x}{( 16 x - 1)^3}}
    \nonumber\\ & = \frac{1}{1 - 16 x} \pFq21{\frac{1}{6}, \frac{1}{3}}{1}{\frac{108 x}{( 16 x - 1)^3}}^2, \label{eq:W4:ogf:d2}
  \end{align}
  was determined in \cite{rogers-5f4}. The final equation follows from Clausen's product formula.

  In four dimensions, the recursive
  relation \eqref{eq:W:crec} combined with \eqref{eq:W2:d4} yields
  \begin{equation*}
    W_4 ( 1 ; 2 k) = \sum_{j = 1}^{k+1} N ( k + 1, j )\, C_{j } C_{k - j +
     2},
  \end{equation*}
  where $C_k$ are the Catalan numbers, as in \eqref{eq:W2:d4}, and
  \begin{equation*}
    N ( k + 1, j + 1) = \frac{1}{j + 1} \binom{k}{j} \binom{k + 1}{j}
  \end{equation*}
  are the Narayana numbers, as in Example \ref{eg:domb}.
  After developing some further properties of the
  moments, we illustrate in Example~\ref{eg:W4:ogf} that the ordinary generating
  function for the even moments $W_4 (\nu ; 2 k)$ can be expressed in terms of
  hypergeometric functions whenever the dimension is even.
\end{example}

\begin{corollary}\dueto{Hypergeometric form of $W_4$ at even integers}
  For $k=0,1,2,\ldots$, we have
  \begin{equation*}
    W_4 ( \nu; 2 k) = \frac{\binom{2 k + 2 \nu}{k}}{\binom{k + \nu}{k}} \pFq43{- k, - k - \nu, - k - 2 \nu, \nu + 1 / 2}{\nu + 1, 2 \nu + 1, - k - \nu + 1 / 2}{1} .
  \end{equation*}
\end{corollary}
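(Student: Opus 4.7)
The plan is to rewrite the summation formula \eqref{eq:W4} directly as a $_4F_3$ series by computing the ratio of consecutive summands and matching it to the standard hypergeometric shape. Since the series $\pFq{4}{3}{-k,-k-\nu,-k-2\nu,\nu+1/2}{\nu+1,2\nu+1,-k-\nu+1/2}{1}$ terminates at $j=k$ thanks to the factor $(-k)_j$, this is a finite identity that reduces to matching two sums term by term.

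First, I would set $a_j$ to be the summand on the right side of \eqref{eq:W4}, and observe that at $j=0$ it evaluates to $\binom{2k+2\nu}{k}/\binom{k+\nu}{k}$, which coincides with the claimed prefactor (and with $W_2(\nu;2k)$ by \eqref{eq:W2}). Next, I would expand each of the six binomial coefficients in $a_j$ in terms of factorials and carry out a careful, but entirely mechanical, cancellation to obtain
\begin{equation*}
  \frac{a_{j+1}}{a_j} = \frac{(k-j)(k+\nu-j)(k+2\nu-j)(2j+2\nu+1)}{(j+1)(j+\nu+1)(j+2\nu+1)(2k-2j+2\nu-1)}.
\end{equation*}

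Then I would convert this ratio into the standard hypergeometric form by the identifications $k-j=-(j-k)$, $k+\nu-j=-(j-k-\nu)$, $k+2\nu-j=-(j-k-2\nu)$, together with $2j+2\nu+1=2(j+\nu+1/2)$ in the numerator and $2k-2j+2\nu-1=-2(j-k-\nu+1/2)$ in the denominator. Three sign changes in the numerator combine with the single sign change in the denominator to give overall factor $1$, while the explicit factor of $2$ cancels between numerator and denominator. This produces exactly the term ratio
\begin{equation*}
  \frac{(j-k)(j-k-\nu)(j-k-2\nu)(j+\nu+1/2)}{(j+1)(j+\nu+1)(j+2\nu+1)(j-k-\nu+1/2)}
\end{equation*}
of the claimed $_4F_3(1)$, which together with the matching initial term $a_0$ establishes the identity.

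The only real work is the bookkeeping of the ratio simplification, and the one subtle point to address is the denominator parameter $-k-\nu+1/2$: for half-integer $\nu=m-1/2$ this becomes the nonpositive integer $-k-m+1$, but the vanishing of $(-k-m+1)_j$ only occurs at $j\geq k+m$, which lies beyond the truncation point $j=k$ forced by $(-k)_j$ in the numerator, so no poles are encountered in the finite sum. Thus the identity holds uniformly for all half-integer $\nu\geq 0$.
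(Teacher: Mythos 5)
Your proposal is correct and matches the paper's (implicit) argument: the corollary is stated without proof precisely because it is the direct hypergeometric rewriting of the binomial sum \eqref{eq:W4}, obtained by checking the initial term and the term ratio exactly as you do. Your computed ratio and the remark about the denominator parameter $-k-\nu+1/2$ never producing a zero within the truncated range $0\le j\le k$ are both accurate.
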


We note that this hypergeometric function is \emph{well-poised} \cite[ \S16.4]{DLMF}.

\begin{example}
  Applying creative telescoping to the binomial sum \eqref{eq:W4}, we derive
  that the moments $W_4 ( \nu ; 2 k)$ satisfy the recursion
  \begin{eqnarray}
    &  & ( k + 2 \nu + 1) ( k + 3 \nu + 1) ( k + 4 \nu + 1) W_4 ( \nu ; 2 k +
    2) \nonumber\\
    & = & \left( \left( k + \tfrac{1}{2} \right) + 2 \nu \right) \left( 20
    \left( k + \tfrac{1}{2} \right)^2 + 80 \left( k + \tfrac{1}{2} \right) \nu
    + 48 \nu^2 + 3 \right) W_4 ( \nu ; 2 k) \nonumber\\
    &  & - 64 k ( k + \nu) ( k + 2 \nu) W_4 ( \nu ; 2 k - 2).  \label{eq:rec4}
  \end{eqnarray}
  As in Example~\ref{eg:rec3}, $W_4 ( \nu ; s)$ is analytic, exponentially
  bounded for $\Re s \geq 0$ and bounded on vertical lines. Hence,
  Carlson's Theorem~\ref{thm:carlson} applies to show that the recursion
  \eqref{eq:rec4} extends to complex $k$.
\end{example}

The following result is the counterpart of Theorem~\ref{thm:W3:meijer} and
extends \cite[Theorem 2.8]{bsw-rw2}.

\begin{theorem}\dueto{Meijer $G$ form of $W_4$}
  \label{thm:W4:meijer}For all complex $s$ with $\Re (s) > - 4 \nu - 2$
  and dimensions $d \geq 2$,
  \begin{equation*}
    W_4 ( \nu ; s) = 2^{s + 4 \nu} \nu !^3 \frac{\Gamma \left( \frac{s}{2} +
     \nu + 1 \right)}{\Gamma \left( \frac{1}{2} \right)^2 \Gamma \left( -
     \frac{s}{2} \right)} \MeijerG{2, 2}{4, 4}{1, \tfrac{1 - s}{2} - \nu, 1 + \nu, 1 + 2 \nu}{\tfrac{1}{2} + \nu, - \tfrac{s}{2}, - \tfrac{s}{2} - \nu, -
       \tfrac{s}{2} - 2 \nu}{1} .
  \end{equation*}
\end{theorem}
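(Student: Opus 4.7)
The plan is to mirror the proof of Theorem~\ref{thm:W3:meijer}, replacing Parseval applied to the pair $(j_{\nu}, j_{\nu}^2)$ by Parseval applied to the pair $(j_{\nu}^2, j_{\nu}^2)$. First, apply Theorem~\ref{thm:W:bessel} with $n = 4$ and $k = 0$ to obtain
\begin{equation*}
W_4(\nu; s) = \frac{2^{s+1}\Gamma(s/2 + \nu + 1)}{\nu!\,\Gamma(-s/2)}\int_0^{\infty} x^{-s-1} j_{\nu}^4(x)\,\mathd x,
\end{equation*}
valid in the strip $-4\nu - 2 < \Re s < 0$ in which the integral converges absolutely (by the asymptotic estimates from the proof of Theorem~\ref{thm:pn:besselD}).

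Next, write $j_{\nu}^4 = (j_{\nu}^2)^2$ and invoke Parseval's formula for the Mellin transform using the evaluation \eqref{eq:j2:mellin} of $\mathcal{M}[j_{\nu}^2]$ twice. This collapses the integral to a Mellin--Barnes contour integral with four gamma factors in the numerator and four in the denominator, with an overall scalar prefactor $2^{4\nu - 2}\nu!^4/\Gamma(1/2)^2$. After the change of variables $u = z/2$ on the Parseval contour (contributing a Jacobian factor of $2$), the integrand becomes
\begin{equation*}
\frac{\Gamma(u)\,\Gamma\bigl(\tfrac12 + \nu - u\bigr)\,\Gamma\bigl(-\tfrac{s}{2} - u\bigr)\,\Gamma\bigl(\tfrac{1+s}{2} + \nu + u\bigr)}{\Gamma(1 + \nu - u)\,\Gamma(1 + 2\nu - u)\,\Gamma\bigl(1 + \tfrac{s}{2} + \nu + u\bigr)\,\Gamma\bigl(1 + \tfrac{s}{2} + 2\nu + u\bigr)}.
\end{equation*}
Reading off parameters against the standard definition of the Meijer $G$-function identifies this contour integral with
\begin{equation*}
\MeijerG{4,4}{2,2}{1, \tfrac{1-s}{2} - \nu, 1+\nu, 1+2\nu}{\tfrac{1}{2}+\nu, -\tfrac{s}{2}, -\tfrac{s}{2}-\nu, -\tfrac{s}{2}-2\nu}{1},
\end{equation*}
where the two "up" numerator factors $\Gamma(u)$ and $\Gamma(\tfrac{1+s}{2}+\nu+u)$ give $n = 2$ with $a_1 = 1$ and $a_2 = (1-s)/2 - \nu$, the two "down" numerator factors produce $m = 2$ with $b_1 = \tfrac12 + \nu$ and $b_2 = -s/2$, and the four denominator factors account for the remaining parameters $a_3 = 1+\nu$, $a_4 = 1 + 2\nu$, $b_3 = -s/2-\nu$, $b_4 = -s/2-2\nu$. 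Combining the resulting constant $2^{4\nu-1}\nu!^4/\Gamma(1/2)^2$ with the $W_4$ prefactor yields the stated closed form. Analytic continuation then extends the identity from the initial strip to all $s$ with $\Re s > -4\nu - 2$, i.e.\ up to the first pole of $W_4(\nu;s)$ predicted by Corollary~\ref{cor:W:res}.

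The main obstacle is the careful parameter bookkeeping at the Mellin--Barnes stage: verifying that the four ascending and four descending gamma-pole sequences can be simultaneously separated by a vertical contour in $u$ for the relevant range of $s$, and that the resulting integrand matches, on the nose, the particular $G_{4,4}^{2,2}$ stated in the theorem (rather than one of its many equivalent but superficially different forms obtained by permuting or pairing parameters differently). The remaining work---tracking the explicit powers of $2$, the factors $\nu!$, and $\Gamma(1/2)$---is routine once the matching is made precise.
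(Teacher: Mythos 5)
Your proposal is correct and is precisely the argument the paper intends: its proof of Theorem~\ref{thm:W4:meijer} consists of the single remark that one proceeds ``along the lines of the proof of Theorem~\ref{thm:W3:meijer}'', applying Parseval to $j_{\nu}^4 = j_{\nu}^2 \cdot j_{\nu}^2$, which is exactly your route. Your bookkeeping checks out as well --- the prefactor $\frac{2^{s+1}}{\nu!} \cdot \frac{2^{4\nu-1}\nu!^4}{\Gamma(1/2)^2}$ reproduces the stated $2^{s+4\nu}\nu!^3/\Gamma(1/2)^2$, and the eight gamma factors match the claimed $G$-function parameters on the nose.
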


\begin{proof}
  The proof is obtained along the lines of the proof of
  Theorem~\ref{thm:W3:meijer}. This time, Parseval's formula is applied to the
  product $j_{\nu}^4 (x) = j_{\nu}^2 (x) j_{\nu}^2 (x)$.
\end{proof}

\begin{remark}
  We note that, as in the case of \eqref{eq:W3:meijer:hyp} for three steps,
  the Meijer $G$-function in Theorem~\ref{thm:W4:meijer} can be expressed as a sum of
  hypergeometric functions, namely
  \begin{eqnarray}
    W_4 (\nu ; s) & = & d_1 \cdot\pFq43{\tfrac{1}{2}, \tfrac{1}{2} - \nu, \tfrac{1}{2} + \nu, 2 \nu +
      \tfrac{s}{2} + 1}{\nu + \tfrac{s + 3}{2}, 2 \nu + \tfrac{s + 3}{2}, 3 \nu + \tfrac{s +
      3}{2}}{1} \nonumber\\
    &  & + d_2 \cdot\pFq43{- \tfrac{s}{2}, - \tfrac{s}{2} - \nu, - \tfrac{s}{2} - 2 \nu, \nu +
      \tfrac{1}{2}}{\nu + 1, \nu + 2, - \nu - \tfrac{s - 1}{2}}{1},  \label{eq:W4:meijer:hyp}
  \end{eqnarray}
  by Slater's Theorem \cite[p.~57]{mar}. The coefficients $d_1$ and $d_2$ of
  the two hypergeometric functions are
  \begin{equation*}
    d_1 \assign \frac{2^{4 \nu + s + 1} \Gamma (\nu + 1) \Gamma (2 \nu +
     \tfrac{s}{2} + 1)}{\sqrt{\pi} \Gamma (3 \nu + \tfrac{s + 3}{2})} c_1,
     \hspace{1em} d_2 \assign c_2 ,
  \end{equation*}
  where the coefficients $c_1$ and
  $c_2$ are as in the three-step case, see Remark~\ref{rk:W3:meijer:hyp}.
\end{remark}

\begin{example}\dueto{Odd moments $W_4(\nu;\cdot)$ in even dimensions}
  It was shown in \cite[Section 2.3 \& 3.1]{bsw-rw2} that the average
  distance to the origin after four random steps in the plane, as well as all its odd
  moments, can be evaluated in terms of the elliptic integrals
  \begin{eqnarray*}
    A & \assign & \frac{1}{\pi^3} \int_0^1 K' ( k)^2 \mathd k = \frac{\pi}{16} \pFq76{\tfrac{5}{4}, \tfrac{1}{2}, \tfrac{1}{2}, \tfrac{1}{2}, \tfrac{1}{2},
      \tfrac{1}{2}, \tfrac{1}{2}}{\tfrac{1}{4}, 1, 1, 1, 1, 1}{1},\\
    B & \assign & \frac{1}{\pi^3} \int_0^1 k^2 K' ( k)^2 \mathd k = \frac{3
    \pi}{256} \pFq76{\tfrac{7}{4}, \tfrac{3}{2}, \tfrac{3}{2}, \tfrac{1}{2}, \tfrac{1}{2},
      \tfrac{1}{2}, \tfrac{1}{2}}{\tfrac{3}{4}, 2, 2, 2, 2, 1}{1} .
  \end{eqnarray*}
  We note that each of the ${}_7 F_6$ hypergeometric functions may alternatively
  be expressed as the sum of two ${}_6 F_5$ hypergeometric functions. Then, in
  two dimensions,
  \begin{equation*}
    W_4 ( 0 ; - 1) = 4 A , \hspace{1em} W_4 ( 0 ; 1) = 16 A - 48 B,
  \end{equation*}
  and it follows from \eqref{eq:rec4}, extended to complex values, that all
  odd moments are indeed rational linear combinations of $A$ and $B$. In order
  to generalize this observation to higher dimensions, we claim that
  \begin{eqnarray}
    0 & = & 3 ( s + 2) ( s + 6 \nu) ( s + 8 \nu) ( s + 8 \nu - 2) W_4 ( \nu ;
    s) \nonumber\\
    &  & + 256 \nu^3 ( s + 4 \nu) W_4 ( \nu - 1 ; s + 2) - 8 \nu^3 ( 5 s + 32
    \nu - 6) W_4 ( \nu - 1 ; s + 4) .  \label{eq:W4:rec:nu:s}
  \end{eqnarray}
  This counterpart of \eqref{eq:W3:rec:nu:s} can be proved using the holonomic systems
  approach \cite{zeilberger90} applied to the hypergeometric form \eqref{eq:W4:meijer:hyp}. We
  conclude that, in any even dimension, the odd moments lie in the
  $\mathbb{Q}$-span of the constants $A$ and $B$, which arose in the planar
  case. For instance, we find that the average distance after four random
  steps in four dimensions is
  \begin{equation*}
    W_4 ( 1 ; 1) = \frac{3334144}{165375} A - \frac{11608064}{165375} B,
  \end{equation*}
  and so on.
\end{example}

\begin{example}\dueto{First derivative of $W_4(\nu;\cdot)$ in even dimensions}
  In continuation of Example~\ref{eg:W3:der}, we recall from \cite[Examples
  6.2 and 6.6]{bswz-densities} that
  \begin{equation*}
    W_4' (0 ; 0) = \frac{7 \zeta (3)}{2 \pi^2}, \hspace{1em} W_4' (0 ; 2) =
     \frac{14 \zeta (3)}{\pi^2} - \frac{12}{\pi^2} + 3,
  \end{equation*}
  where, again, the derivatives are with respect to $s$. These evaluations
  may, for instance, be obtained from differentiating the hypergeometric
  expression \eqref{eq:W4:meijer:hyp}. Proceeding as in
  Example~\ref{eg:W3:der}, we differentiate both \eqref{eq:rec4}, extended to
  complex values, and \eqref{eq:W4:rec:nu:s}, we conclude that, for all
  integers $k, \nu \geq 0$,
  \begin{equation*}
    W_4' (\nu ; 2 k) = r_1 + r_2 \frac{1}{\pi^2} + r_3 \frac{7 \zeta (3)}{2
     \pi^2},
  \end{equation*}
  with rational numbers $r_1, r_2, r_3$. Again, we find that $r_3 = 1$ in the
  case $k = 0$. In four and six dimensions, we obtain, for example,
  \begin{equation*}
    W_4' (1 ; 0) = \frac{3}{4} - \frac{53}{9 \pi^2} + \frac{7 \zeta (3)}{2
     \pi^2}, \hspace{1em} W_4' (2 ; 0) = \frac{13}{24} - \frac{48467}{14175
     \pi^2} + \frac{7 \zeta (3)}{2 \pi^2} .
  \end{equation*}
  In analogy with the case of three-step walks, the derivative $W_4' (0 ; 0)$
  is particularly interesting because it is the Mahler measure of the
  multivariate polynomial $1 + x_1 + x_2 + x_3$.
\end{example}

\begin{example}\dueto{Second derivative of $W_4(\nu;\cdot)$ in even dimensions}
  In \cite[(6.5), (6.10)]{bswz-densities} evaluations for the second
  derivatives $W_4'' (0 ; 0)$ and $W_4'' (0 ; 2)$ are given in terms of
  polylogarithmic constants. It follows from these evaluations and the
  functional equation \eqref{eq:rec4} that the derivative values $W_4'' (0 ; 2
  k)$ all lie in the $\mathbb{Q}$-linear span of
  \begin{equation*}
    1, \hspace{1em} \pi^2, \hspace{1em} \log^2 2, \hspace{1em}
     \frac{1}{\pi^2}, \hspace{1em} \frac{\log 2}{\pi^2}, \hspace{1em}
     \frac{\zeta (3)}{\pi^2}, \hspace{1em} \frac{\log^4 2}{\pi^2},
     \hspace{1em} \frac{\zeta (3) \log 2}{\pi^2}, \hspace{1em}
     \frac{\operatorname{Li}_4 (1 / 2)}{\pi^2},
  \end{equation*}
  where $\operatorname{Li}_n (z)\assign\sum_{k \ge 1} z^k/k^n$ is the polylogarithm of order $n$. Indeed, we
  realize from the dimensional recursion \eqref{eq:W4:rec:nu:s} that the same
  is true for the values $W_4'' (\nu ; 2 k)$ in all even dimensions. For
  example,
  \begin{align*}
    W_4'' (1 ; 0) & = \frac{253}{432} + \frac{5 W_4' (0 ; 0)}{162} -
    \frac{239 W_4' (0 ; 2)}{648} - \frac{26 W_4'' (0 ; 0)}{27} + \frac{53
    W_4'' (0 ; 2)}{108}\\
    & = - \frac{25}{48} - \frac{1}{5} \pi^2 - \log^2 2 + \frac{1193}{54
    \pi^2} - \frac{106}{3} \frac{\log 2}{\pi^2} + \frac{21}{4} \frac{\zeta
    (3)}{\pi^2} \\
    &\qquad+ \frac{\log^4 2}{\pi^2} + 21 \frac{\zeta (3) \log
    2}{\pi^2} + 24 \frac{\operatorname{Li}_4 (1 / 2)}{\pi^2} .
  \end{align*}
  The number of (presumed) transcendental constants can be somewhat reduced
  when working in terms of Kummer-type polylogarithms, as illustrated in
  \cite[Theorem~4.7]{logsin1}.
\end{example}

\begin{example}\dueto{Ordinary generating function for even moments with four steps}
  \label{eg:W4:ogf}In \eqref{eq:W4:ogf:d2} we noted that the ordinary
  generating function of the moments $W_4 (0 ; 2 k)$ has a concise
  hypergeometric expression. It is natural to wonder if this result
  extends to higher dimensions.

  Combining the recursive relations
  \eqref{eq:rec4} and \eqref{eq:W4:rec:nu:s}, we are able to derive that the
  ordinary generating function of the moments $W_4 (\nu ; 2 k)$, when
  complemented with an appropriate principal part (as in
  Theorem~\ref{thm:W3:gf}), can be obtained from the corresponding generating
  function of $W_4 (\nu - 1 ; 2 k)$ as well as its first two derivatives.
  Because the precise relationship is not too pleasant, we only record the
  simplified generating function,
  \begin{equation}\label{eq:d41}
    - \frac{1}{2 x^2} + \frac{1}{x} + \sum_{n = 0}^{\infty} W_4 (1 ; 2 k) x^k
     = (32 x - 7) F_0^2 - (4 x - 1) \left[ ( 32 x +3)F_0 F_1 - \left(16 x^2 + 10 x + \frac14\right)F_1^2 \right],
  \end{equation}
  that we obtain in dimension $4$. Here,
  \begin{equation*}
    F_{\lambda} \assign \frac{1}{2\cdot 3^\lambda x( 16 x - 1)^{1 - \lambda}}
     \frac{\mathd^{\lambda}}{\mathd x^{\lambda}} \pFq21{\tfrac{1}{6}, \tfrac{1}{3}}{1}{\frac{108 x}{( 16 x - 1)^3}},
  \end{equation*}
  which, by the differentiation formula \cite[(16.3.1)]{DLMF}
  \begin{equation}
    \frac{\mathd^n}{\mathd x^n} \pFq{p}{q}{a_1, \ldots, a_p}{b_1, \ldots, b_q}{x} = \frac{( a_1)_n \cdots ( a_p)_n}{( b_1)_n \cdots
    ( b_q)_n} \pFq{p}{q}{a_1 + 1, \ldots, a_p + 1}{b_1 + 1, \ldots, b_q + 1}{x}, \label{eq:hyp:D}
  \end{equation}
  can be expressed as generalized hypergeometric functions. It would be nice
  if it was possible to make the general case as explicit as we did in
  Theorem~\ref{thm:W3:gf} for three-step walks, but we have not succeeded in
  doing so.
\end{example}

\subsection{Moments of $5$-step walks}\label{ssec:mom5}

As in the planar case, as well as in many related problems, it is much harder
to obtain explicit results in the case of five or more steps. This is
reflected, for instance, in the fact that an application of
Corollary~\ref{cor:W:iter} with $n_1 = 3$ and $n_2 = 2$, and appealing to
\eqref{eq:W3}, results in a double (and not single) sum of hypergeometric
terms.

\begin{lemma}
  The nonnegative even moments for a $5$-step walk in $d$ dimensions are
  \begin{eqnarray}
    W_5 ( \nu ; 2 k) & = & \sum_{j = 0}^k \binom{k}{j} \frac{\binom{k +
    \nu}{j}}{\binom{j + \nu}{j}} \frac{\binom{2 ( k - j + \nu)}{k -
    j}}{\binom{k - j + \nu}{k - j}} W_3 (\nu ; 2 j) \nonumber\\
    & = & \sum_{j = 0}^k \binom{k}{j} \frac{\binom{k + \nu}{j}}{\binom{j +
    \nu}{j}} \frac{\binom{2 ( k - j + \nu)}{k - j}}{\binom{k - j + \nu}{k -
    j}} \sum_{i = 0}^j \binom{j}{i} \frac{\binom{j + \nu}{i} \binom{2 i + 2
    \nu}{i}}{\binom{i + \nu}{i}^2}.  \label{eq:W5}
  \end{eqnarray}
\end{lemma}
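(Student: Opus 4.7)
The plan is to apply Corollary~\ref{cor:W:iter} (the moment recursion \eqref{eq:W:crec}) with $n_1 = 3$ and $n_2 = 2$, which immediately gives
\begin{equation*}
  W_5(\nu; 2k) = \sum_{j=0}^k \binom{k}{j} \frac{(k+\nu)!\,\nu!}{(k-j+\nu)!(j+\nu)!}\, W_3(\nu; 2j)\, W_2(\nu; 2(k-j)).
\end{equation*}

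Next, I would substitute the closed form \eqref{eq:W2} for the two-step even moments, namely
$$W_2(\nu; 2m) = \frac{\nu!\,(2m+2\nu)!}{(m+\nu)!(m+2\nu)!},$$
with $m = k-j$, and rewrite the resulting factorial expressions in terms of binomial coefficients. The key identities are
$$\binom{k}{j}\frac{(k+\nu)!\,\nu!}{(k-j+\nu)!(j+\nu)!} = \binom{k}{j}\frac{\binom{k+\nu}{j}}{\binom{j+\nu}{j}} \qquad \text{and} \qquad W_2(\nu; 2(k-j)) = \frac{\binom{2(k-j+\nu)}{k-j}}{\binom{k-j+\nu}{k-j}},$$
each of which is just a regrouping of factorials. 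Assembling these yields the first equality of \eqref{eq:W5}.

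For the second equality, I would then insert the single-sum formula \eqref{eq:W3} for $W_3(\nu; 2j)$ directly into the inner place of the expression, which produces the stated double sum without any further manipulation.

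There is no substantive obstacle here: the statement is a direct consequence of the moment recursion combined with the already established evaluations of $W_2$ and $W_3$. The only care required is bookkeeping of the factorials to confirm the binomial-coefficient form matches the claimed expressions; no analytic continuation, recurrences, or hypergeometric identities are needed for this lemma, in contrast to the three- and four-step cases treated earlier in Section~\ref{sec:mom}.
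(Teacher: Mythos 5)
Your proposal is correct and coincides with the paper's own (implicitly indicated) derivation: the text immediately preceding the lemma explains that it follows from applying Corollary~\ref{cor:W:iter} with $n_1=3$, $n_2=2$ and then appealing to \eqref{eq:W2} and \eqref{eq:W3}, exactly as you do. Your factorial-to-binomial regroupings check out, so there is nothing to add.
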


\begin{example}
  \label{eg:W5:rec}As in the case of three and four steps, we can apply
  creative telescoping to the binomial sum \eqref{eq:W5} to derive a recursion
  for the moments $W_5 ( \nu ; 2 k)$. In contrast to the three-term recursions
  \eqref{eq:rec3} and \eqref{eq:rec4}, we now obtain a four-term recursion,
  namely
  \begin{align}
   & ( k + 2 \nu + 1) ( k + 3 \nu + 1) ( k + 4 \nu + 1) ( k + 5 \nu + 1)
    W_5 ( \nu ; 2 k + 2) \label{eq:W5:rec}\\
  & = a \left( \nu ; k + \tfrac{1}{2} \right) W_5 ( \nu ; 2 k) - b ( \nu
    ; k) W_5 ( \nu ; 2 k - 2) + 225 k (k - 1) (k + \nu) (k - 1 + \nu) W_5 (
    \nu ; 2 k - 4),  \nonumber
  \end{align}
  where
  \begin{eqnarray*}
    a (\nu ; m) & \assign & 35 m^4 + 350 \nu m^3 + \left( 1183 \nu^2 + \tfrac{21}{2}
    \right) m^2\\
    &  & + \left( 1540 \nu^2 + \tfrac{105}{2} \right) \nu m + \left( 600
    \nu^4 + \tfrac{237}{4} \nu^2 + \tfrac{3}{16} \right),\\
    b (\nu ; k) & \assign & k (k + \nu) (259 k^2 + 1295 k \nu + 1450 \nu^2 + 26) .
  \end{eqnarray*}
  As in Example~\ref{eg:rec3}, $W_5 ( \nu ; s)$ is analytic and suitably
  bounded for $\Re s \geq 0$, so that we may conclude from
  Carlson's Theorem~\ref{thm:carlson} that the recursion \eqref{eq:W5:rec}
  extends to complex $k$.
\end{example}

\begin{example}\dueto{Dimensional recursion for $W_5(\nu;s)$}
  \label{eg:W5:rec:dim}Creative telescoping, applied to the binomial sum
  \eqref{eq:W5} for $W_5 (\nu ; 2 k)$, allows us to derive the following more
  involved counterpart of the dimensional recursions \eqref{eq:W3:rec:nu:s},
  \eqref{eq:W4:rec:nu:s} for three and four steps.
  \begin{eqnarray}
    0 & = & 3 (s + 2) (s + 4) (s + 2 \nu + 2) (s + 8 \nu) (s + 10 \nu - 2) (s
    + 10 \nu) W_5 (\nu ; s) \nonumber\\
    &  & - 450 \nu^4 (s + 4) (s + 2 \nu + 2) W_5 (\nu - 1 ; s + 2)
    \nonumber\\
    &  & + 4 \nu^4 a (\nu ; s) W_5 (\nu - 1 ; s + 4) - 2 \nu^4 b (\nu ; s)
    W_5 (\nu - 1 ; s + 6),  \label{eq:W5:rec:dim}
  \end{eqnarray}
  where
  \begin{eqnarray*}
    a (\nu ; s) & \assign & 107 s^2 + 2 ( 445 \nu + 152) s + 2 ( 550 \nu^2 + 1165
    \nu - 78),\\
    b (\nu ; s) & \assign & ( s + 4 \nu + 2) ( 13 s + 110 \nu - 16) .
  \end{eqnarray*}
  This recursion is first obtained for nonnegative even integers $s$, and then
  extended to complex values using Carlson's Theorem~\ref{thm:carlson}.
\end{example}

\section{Densities of short random walks}\label{sec:den}

\subsection{Densities of $2$-step walks}\label{ssec:den2}

We find an explicit formula for the probability density $p_2 (\nu ; x)$ of the
distance to the origin after two random steps in $\mathbb{R}^d$ by computing the
Bessel integral \eqref{eq:pn:bessel}. An equivalent formula is given in
\cite[Corollary~4.2]{wan-phd}, which exploited the fact that the probability
density is essentially the inverse Mellin transform of the moments which are
evaluated in Theorem~\ref{thm:W2}.

\begin{lemma}
  The probability density function of the distance to the origin in $d
  \geq 2$ dimensions after $2$ steps is, for $0 < x < 2$,
  \begin{equation}
    p_2 (\nu ; x) = \frac{2}{\pi \binom{2 \nu}{\nu}} x^{2 \nu} ( 4 - x^2)^{\nu
    - 1 / 2} . \label{eq:p2}
  \end{equation}
\end{lemma}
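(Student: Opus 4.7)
The plan is to avoid the Bessel integral \eqref{eq:pn:bessel} altogether and instead compute $p_2(\nu;x)$ directly by a one-dimensional change of variables. By rotational symmetry, we may condition on the first unit step being a fixed unit vector, say $e_1$. Writing $\mathbf{X}_2$ for the second (uniformly distributed) unit vector and $\Lambda = \langle \mathbf{X}_2, e_1 \rangle = \cos\theta$, the distance to the origin after two steps is
\begin{equation*}
  R = \| e_1 + \mathbf{X}_2 \| = \sqrt{2 + 2\Lambda}.
\end{equation*}

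Next, I would invoke the density of $\Lambda$ on $[-1,1]$, which is the standard fact quoted in the remark following Example~\ref{eg:W:matpow}, namely
\begin{equation*}
  f_\Lambda(\lambda) = \frac{\nu!}{\sqrt{\pi}\,(\nu - 1/2)!}\,(1 - \lambda^2)^{\nu - 1/2}.
\end{equation*}
Since $\lambda = R^2/2 - 1$ gives $d\lambda = R\,dR$ and
\begin{equation*}
  1 - \lambda^2 = \bigl(1-\tfrac{R^2}{2}+1\bigr)\bigl(1+\tfrac{R^2}{2}-1\bigr) = \frac{R^2(4 - R^2)}{4},
\end{equation*}
the change of variables formula immediately yields
\begin{equation*}
  p_2(\nu;R) = f_\Lambda(\lambda(R))\,R = \frac{\nu!}{\sqrt{\pi}\,(\nu-1/2)!}\cdot\frac{R^{2\nu}(4-R^2)^{\nu-1/2}}{2^{2\nu-1}}
\end{equation*}
on the support $0 < R < 2$.

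Finally, the prefactor is simplified using the Legendre duplication formula in the form $\Gamma(\nu+1/2) = (2\nu)!\sqrt{\pi}/(4^\nu \nu!)$, which gives
\begin{equation*}
  \frac{\nu!}{\sqrt{\pi}\,(\nu-1/2)!} = \frac{4^\nu (\nu!)^2}{\pi\,(2\nu)!} = \frac{4^\nu}{\pi\binom{2\nu}{\nu}},
\end{equation*}
and, after combining with $1/2^{2\nu-1}$, yields the constant $2/(\pi \binom{2\nu}{\nu})$ claimed in \eqref{eq:p2}. There is no real obstacle here; the approach is entirely elementary and avoids all Bessel function machinery. As an alternative proof one could substitute $j_\nu^2$ into \eqref{eq:pn:bessel} and reduce to a known discontinuous Weber--Schafheitlin integral of three $J_\nu$'s, but that route is less transparent than the direct conditioning argument above.
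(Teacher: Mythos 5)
Your proof is correct, but it follows a genuinely different route from the paper. The paper substitutes $n=2$ into the Bessel integral representation \eqref{eq:pn:besselj} and evaluates the resulting integral of three Bessel functions by Watson's formula \eqref{eq:int:J3}, with $a=b=1$, $c=x$ and triangle area $\Delta=\tfrac{x}{2}\sqrt{1-(x/2)^2}$ --- precisely the ``less transparent'' alternative you mention at the end. Your argument instead conditions on the first step, writes $R=\sqrt{2+2\Lambda}$ with $\Lambda$ the cosine of the angle of the second step, and pushes forward the density $\frac{\nu!}{\sqrt{\pi}\,(\nu-1/2)!}(1-\lambda^2)^{\nu-1/2}$ quoted from Kingman in the remark following Example~\ref{eg:W:matpow}; the change of variables, the factorization $1-\lambda^2=\tfrac{1}{4}R^2(4-R^2)$, and the duplication-formula simplification of the constant are all correct, and the computation is valid for every half-integer $\nu\geq 0$ (for $\nu=0$ one gets the arcsine density of $\cos\theta$, so the planar case is covered). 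Your route is essentially the $n=2$ instance of the recursion in Theorem~\ref{thm:pn:rec}, carried out at the level of random variables rather than densities, and it makes the stochastic interpretation of the functional equation \eqref{eq:p2:fun} in Remark~\ref{rk:p2:fun} transparent. What the paper's route buys in exchange is that the triple-Bessel evaluation \eqref{eq:int:J3} is needed again later anyway (for the residues of $W_3(\nu;s)$ and the asymptotics of $p_3(\nu;x)$ at the origin), so deriving $p_2$ from it costs nothing extra in that context; your derivation is the more elementary and self-contained of the two.
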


\begin{proof}
  It follows from \eqref{eq:pn:besselj} that
  \begin{equation*}
    p_2 (\nu ; x) = \nu !2^{\nu} x^{\nu + 1} \int_0^{\infty} t^{1 - \nu}
     J_{\nu} (t x) J_{\nu}^2 ( t) \mathd t.
  \end{equation*}
  From \cite[Chapter 13.46]{watson-bessel}, we have the integral evaluation
  \begin{equation}
    \int_0^{\infty} t^{1 - \nu} J_{\nu} (a t) J_{\nu} (b t) J_{\nu} (c t)
    \mathd t = \frac{2^{\nu - 1} \Delta^{2 \nu - 1}}{(a b c)^{\nu} \Gamma (\nu
    + 1 / 2) \Gamma (1 / 2)}, \label{eq:int:J3}
  \end{equation}
  assuming that $\Re (\nu) > - 1 / 2$ and that $a, b, c$ are the sides
  of a triangle of area $\Delta$. In our case,
  \begin{equation*}
    \Delta = \tfrac{x}{2} \sqrt{1 - ( \tfrac{x}{2})^2},
  \end{equation*}
  and therefore
  \begin{equation*}
    p_2 (\nu ; x) = \frac{\nu !}{\Gamma (\nu + 1 / 2) \Gamma (1 / 2)} x^{2
     \nu} \left( 1 - ( \tfrac{x}{2})^2 \right)^{\nu - 1 / 2},
  \end{equation*}
  which is equivalent to \eqref{eq:p2}.
\end{proof}

Note that \eqref{eq:p2} reflects the general fact, discussed in
Section~\ref{ssec:odd}, that the densities $p_n (\nu ; x)$ are piecewise
polynomial in odd dimensions.

For comparison with the case of $3$ steps, we record that the density $p_2 (
\nu ; x)$ satisfies the following functional equation: if $F ( x) \assign p_2 ( \nu
; x) / x$, then
\begin{equation}
  F ( x) = F \left( \sqrt{4 - x^2} \right) . \label{eq:p2:fun}
\end{equation}
\begin{remark}
  \label{rk:p2:fun}The probability density in two dimensions, that is
  \begin{equation*}
    p_2 (0 ; x) = \frac{2}{\pi \sqrt{4 - x^2}},
  \end{equation*}
  is readily identified as the distribution of $2 | \cos \theta |$ with
  $\theta$ uniformly distributed on $[0, 2 \pi]$. In other words, if $\boldsymbol{X} =
  (X_1, X_2)$ is uniformly distributed on the sphere of radius $2$ in
  $\mathbb{R}^2$, then $p_2 (0 ; x)$ describes the distribution of $|X_1 |$.
  It is then natural to wonder whether this observation extends to higher
  dimensions.

  Indeed, if $\boldsymbol{X} = (X_1, X_2)$ is uniformly distributed on the sphere of radius
  $2$ in $\mathbb{R}^{2 d - 2}$, where $\boldsymbol{X}$ is partitioned into $X_1, X_2 \in
  \mathbb{R}^{d - 1}$, then $p_2 (\nu ; x)$ describes the distribution of
  $|X_1 |$. Details are left to the interested reader. In this stochastic
  interpretation, the invariance of \eqref{eq:p2:fun} under $x \mapsto \sqrt{4
  - x^2}$ is a reflection of the fact that $|X_1 |$ and $|X_2 | = \sqrt{4 -
  |X_1 |^2}$ share the same distribution.
\end{remark}

\subsection{Densities of $3$-step walks}\label{ssec:den3}

It was shown in \cite{bswz-densities} that the density $p_3 (0 ; x)$ of the
distance to the origin after three random steps in the plane has the closed form
\begin{equation}
  p_3 ( 0 ; x) = \frac{2 \sqrt{3}}{\pi}  \frac{x}{3 + x^2} \pFq21{\tfrac{1}{3}, \tfrac{2}{3}}{1}{\frac{x^2 ( 9 - x^2)^2}{( 3 + x^2)^3}},
  \label{eq:p3:2d}
\end{equation}
valid on the interval $(0, 3)$. We next generalize this hypergeometric
expression to arbitrary dimensions. In order to do so, we need to establish
the behaviour of $p_3 ( \nu ; x)$ at the end points.

To begin with, we use information on the pole structure of the moments $W_3
(\nu ; s)$ to deduce the asymptotic behaviour of $p_3 ( \nu ; x)$ as $x
\rightarrow 0^+$.

\begin{proposition}
  \label{prop:p3:asy0} For positive half-integer $\nu$ and $x \rightarrow 0^+$,
  \begin{equation}
    p_3 ( \nu ; x) \sim \frac{2}{\sqrt{3} \pi}  \frac{3^{\nu}}{\binom{2
    \nu}{\nu}} x^{2 \nu + 1} . \label{eq:p3:asy0}
  \end{equation}
\end{proposition}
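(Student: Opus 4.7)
The plan is to start from the Bessel integral representation \eqref{eq:pn:besselj} specialized to $n=3$,
\begin{equation*}
p_3(\nu;x) = \frac{2^{-2\nu}}{\nu!^2}\,x^{2\nu+1}\int_0^\infty t^{2\nu+1}\, j_\nu(tx)\, j_\nu^3(t)\,\mathd t,
\end{equation*}
so that, after dividing by $x^{2\nu+1}$, the task reduces to showing
\begin{equation*}
\lim_{x\to 0^+}\int_0^\infty t^{2\nu+1}\,j_\nu(tx)\,j_\nu^3(t)\,\mathd t \;=\; \int_0^\infty t^{2\nu+1}\,j_\nu^3(t)\,\mathd t.
\end{equation*}
The limiting value is exactly the integral evaluated in Example~\ref{eg:W3:poles} via the Bessel triple product formula \eqref{eq:int:J3}:
\begin{equation*}
\frac{2^{-2\nu}}{\nu!^2}\int_0^\infty t^{2\nu+1}\,j_\nu^3(t)\,\mathd t = \frac{2}{\sqrt{3}\pi}\,\frac{3^\nu}{\binom{2\nu}{\nu}},
\end{equation*}
which produces the claimed leading coefficient.

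To justify the passage to the limit, I would use that $j_\nu$ is the Fourier transform of the uniform probability measure on the unit sphere (see the proof of Theorem~\ref{thm:bi}), so that $|j_\nu(u)|\leq j_\nu(0)=1$ uniformly in $u\in\mathbb{R}$. Combined with the asymptotic \eqref{eq:j:infty}, which yields $|j_\nu(t)|=O(t^{-\nu-1/2})$ as $t\to\infty$, the integrand is majorised by $t^{2\nu+1}|j_\nu^3(t)|=O(t^{-\nu-1/2})$, which is integrable on $[1,\infty)$ for $\nu>1/2$ and behaves like $O(t^{2\nu+1})$ near the origin. Since $j_\nu(tx)\to 1$ pointwise as $x\to 0^+$, dominated convergence delivers the limit for all $\nu\geq 1$.

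The main obstacle is the boundary case $\nu=\tfrac{1}{2}$ (three dimensions), where the dominating function decays only like $t^{-1}$ and dominated convergence fails outright. I would address this in one of two ways. First, one can appeal directly to the explicit piecewise polynomial form $p_3(\tfrac{1}{2};x)=x^2/2$ on $[0,1]$ obtained in Section~\ref{ssec:odd}; using $\binom{1}{1/2}=4/\pi$, one checks that $\tfrac{2}{\sqrt{3}\pi}\cdot 3^{1/2}/\binom{1}{1/2}\cdot x^2 = x^2/2$, confirming \eqref{eq:p3:asy0} in that case. Alternatively, one applies Theorem~\ref{thm:pn:besselD} with $k=1$ instead of \eqref{eq:pn:besselj}; the additional factor $-\tfrac{1}{t}\tfrac{\mathd}{\mathd t}$ acting on $j_\nu^3(t)$ improves the tail decay by one order of $t$, rendering the integrand absolutely integrable and reducing the limit once more to dominated convergence. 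Either route completes the proof uniformly in positive half-integers $\nu$.
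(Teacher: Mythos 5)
Your argument is correct, but it takes a genuinely different route from the paper. The paper's proof is a two-line appeal to Mellin calculus: $W_3(\nu;s-1)$ is the Mellin transform of $p_3(\nu;x)$, its first pole (at $s=-d$, by Corollary~\ref{cor:W:res}) has residue $\frac{2}{\sqrt{3}\pi}\,3^{\nu}/\binom{2\nu}{\nu}$ via \eqref{eq:int:J3}, and the leading pole of a Mellin transform dictates the leading asymptotics at $0^+$. You instead work directly on the Bessel integral \eqref{eq:pn:besselj}, factor out $x^{2\nu+1}$, and pass to the limit under the integral sign using $|j_\nu(u)|\le 1$ (correctly justified via the characteristic-function interpretation) and the tail bound $j_\nu(t)=O(t^{-\nu-1/2})$. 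This is more elementary and self-contained — it avoids invoking the transfer from pole structure to asymptotics, which itself requires decay hypotheses on vertical lines — at the cost of having to treat the borderline case $\nu=\tfrac12$ separately, where the $x$-uniform majorant $t^{2\nu+1}|j_\nu(t)|^3=O(t^{-\nu-1/2})$ is not integrable. Both proofs funnel through the same evaluation \eqref{eq:int:J3} for the constant, and your numerical check of the $\nu=\tfrac12$ case against $p_3(\tfrac12;x)=x^2/2$ is correct.

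One caveat: of your two proposed fixes for $\nu=\tfrac12$, only the first (the explicit piecewise polynomial from Section~\ref{ssec:odd}) works; the second does not. Passing to $k=1$ in Theorem~\ref{thm:pn:besselD} and normalizing by $x^{2\nu+1}$ yields, up to constants,
\begin{equation*}
  \int_0^{\infty} t^{2\nu+3}\, j_{\nu+1}(tx)\left(-\tfrac{1}{t}\tfrac{\mathd}{\mathd t}\right) j_{\nu}^3(t)\,\mathd t,
\end{equation*}
and while the derivative does improve the decay of $\left(-\tfrac{1}{t}\tfrac{\mathd}{\mathd t}\right)j_\nu^3(t)$ to $O(t^{-3\nu-5/2})$, the prefactor that must be bounded uniformly in $x$ via $|j_{\nu+1}(tx)|\le 1$ now contributes $t^{2\nu+3}$ rather than $t^{2\nu+1}$. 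The net majorant is $O(t^{-\nu+1/2})$, which is \emph{worse} than the $k=0$ case and not integrable at $\nu=\tfrac12$; each further increase of $k$ loses another power of $t$. So that route does not reduce to dominated convergence. Since your first fix settles $\nu=\tfrac12$ completely, the proof stands, but the second alternative should be deleted.
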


\begin{proof}
  As shown in Corollary~\ref{cor:W:res}, the moments $W_3 (\nu ; s)$ are
  analytic for $\Re s > - d$ and the first pole has residue
  \begin{equation*}
    \operatorname{Res}_{s = - d} W_3 (\nu ; s) = \nu !2^{\nu} \int_0^{\infty} x^{1 -
     \nu} J_{\nu}^3 ( x) \mathd x = \frac{2}{\sqrt{3} \pi}
     \frac{3^{\nu}}{\binom{2 \nu}{\nu}},
  \end{equation*}
  where the last equality is another special case of \eqref{eq:int:J3}. The
  asymptotic behaviour \eqref{eq:p3:asy0} then follows because $W_3 (\nu ; s -
  1)$ is the Mellin transform of $p_3 (\nu ; x)$.
\end{proof}

On the other hand, to obtain the asymptotic behaviour of $p_3 ( \nu ; x)$ as
$x \rightarrow 3^-$, we have to work a bit harder than in the case of the
behaviour \eqref{eq:p3:asy0} as $x \rightarrow 0^+$.

\begin{proposition}
  \label{prop:p3:asy3} For positive half-integer $\nu$ as $x \rightarrow 3^-$,
  \begin{equation}
    p_3 ( \nu ; x) \sim \frac{\sqrt{3}}{2 \pi}  \frac{2^{2 \nu}
    3^{\nu}}{\binom{2 \nu}{\nu}}  ( 3 - x)^{2 \nu} . \label{eq:p3:asy3}
  \end{equation}
\end{proposition}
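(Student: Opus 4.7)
The plan is to exploit the convolution recursion from Theorem~\ref{thm:pn:rec} with $n=3$, since it expresses $p_3(\nu;x)$ as a one-dimensional integral against $p_2(\nu;\cdot)$, which is already known explicitly from \eqref{eq:p2}. The asymptotic behavior as $x \to 3^-$ is then a consequence of the fact that, as $x$ approaches the right edge of the support, the effective integration window shrinks to a single point, and the leading contribution comes from a standard beta integral on the shrinking window.

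First I would substitute the explicit formula $\psi_2(\nu;y) = \frac{\nu!}{\pi^{\nu+2}\binom{2\nu}{\nu}} \frac{(4-y^2)^{\nu-1/2}}{y}$ (obtained from \eqref{eq:p2}) into \eqref{eq:pn:rec}, with $y(\lambda) = \sqrt{1+2\lambda x + x^2}$. Since $p_2(\nu;y)$ vanishes for $y > 2$, the integrand is supported on the set where $4-y^2 = 3 - 2\lambda x - x^2 > 0$, i.e. $\lambda < \lambda^*(x) := (3-x^2)/(2x)$. As $x \to 3^-$, the cutoff $\lambda^*(x) \to -1$, so the effective interval $[-1,\lambda^*(x)]$ shrinks.

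Next I would change variables to $\delta = \lambda + 1$, setting $\delta^*(x) = \lambda^*(x) + 1 = \frac{(3-x)(x+1)}{2x} \sim \frac{2}{3}(3-x)$. In the new variable, $(1-\lambda^2)^{\nu-1/2} = (\delta(2-\delta))^{\nu-1/2}$, and $4 - y^2 = 2x(\delta^*-\delta)$, while $y$ and $x$ in the remaining prefactors approach their limit values $y \to 2$ and $x \to 3$. To leading order,
\begin{equation*}
\psi_3(\nu;x) \sim \frac{\nu!^2 \, 2^{2\nu}}{(2\nu)!\,\pi} \cdot \frac{\nu!}{\pi^{\nu+2}\binom{2\nu}{\nu}} \cdot \frac{1}{2} \cdot (2\cdot 3)^{\nu-1/2} \cdot 2^{\nu-1/2} \int_0^{\delta^*} \delta^{\nu - 1/2}(\delta^*-\delta)^{\nu-1/2}\,\mathd \delta.
\end{equation*}
The remaining integral is the standard beta integral $B(\nu+\tfrac{1}{2},\nu+\tfrac{1}{2}) (\delta^*)^{2\nu} = \frac{\Gamma(\nu+1/2)^2}{(2\nu)!}(\delta^*)^{2\nu}$, which supplies the power $(3-x)^{2\nu}$ after using $\delta^* \sim \frac{2}{3}(3-x)$.

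Finally I would convert $\psi_3$ back to $p_3$ via $p_3(\nu;x) = \frac{2\pi^{\nu+1} x^{2\nu+1}}{\nu!} \psi_3(\nu;x)$, evaluate $x^{2\nu+1}$ at $x=3$, and collect constants; the Legendre duplication formula $\nu!\,\Gamma(\nu+1/2) = \sqrt{\pi}\,(2\nu)!/2^{2\nu}$ turns the assembled constant into the claimed $\frac{\sqrt{3}}{2\pi}\frac{2^{2\nu} 3^\nu}{\binom{2\nu}{\nu}}$. The main obstacle is purely bookkeeping: managing the many powers of $2$, $3$, $\pi$ and factorial ratios without error. A minor subtlety is checking that the replacement of $y$, $x$, and $(2-\delta)^{\nu-1/2}$ by their limit values in Step~2 introduces only multiplicative errors of the form $1 + O(3-x)$, which is immediate because the integration window itself has length $\delta^* = O(3-x)$, so all such corrections are subsumed into the $o((3-x)^{2\nu})$ remainder.
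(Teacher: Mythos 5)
Your proposal is correct and follows essentially the same route as the paper: both start from the recursion of Theorem~\ref{thm:pn:rec} with the explicit two-step density \eqref{eq:p2}, identify the integration window shrinking to a point as $x \to 3^-$ (cut off at $\lambda = (3-x^2)/(2x)$), and extract the leading order from a beta integral of length $O(3-x)$, differing only in the choice of substitution ($\delta = \lambda+1$ versus the paper's $t = \sqrt{1+2\lambda x+x^2}/x$). The constant you assemble does reduce to $\frac{\sqrt{3}}{2\pi}\frac{2^{2\nu}3^{\nu}}{\binom{2\nu}{\nu}}$ via the duplication formula, so no gap remains.
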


\begin{proof}
  Using Theorem~\ref{thm:pn:rec} together with the fact that, for $x \in [0,
  2]$,
  \begin{equation*}
    p_2 (\nu ; x) = \frac{2}{\pi \binom{2 \nu}{\nu}} x^{2 \nu} ( 4 -
     x^2)^{\nu - 1 / 2},
  \end{equation*}
  we find
  \begin{equation*}
    p_3 (\nu ; x) = \frac{(2 x)^{2 \nu + 1}}{\binom{2 \nu}{\nu}^2 \pi^2}
     \int_{- 1}^{\min (1, \frac{3 - x^2}{2 x})} \frac{( 4 - (1 + 2 \lambda x +
     x^2))^{\nu - 1 / 2}}{(1 + 2 \lambda x + x^2)^{1 / 2}} (1 -
     \lambda^2)^{\nu - 1 / 2} \mathd \lambda .
  \end{equation*}
  Observe that the upper bound of integration is $1$ if $x \in [0, 1]$, and
  $(3 - x^2) / (2 x)$ for $x \in [1, 3]$. On $x \in [ 1, 3]$, after
  substituting $t = \sqrt{1 + 2 \lambda x + x^2} / x$, we therefore find
  \begin{equation*}
    p_3 (\nu ; x) = \frac{(2 x)^2}{\binom{2 \nu}{\nu}^2 \pi^2} \int_{1 -
     \frac{1}{x}}^{\frac{2}{x}} \{ ( t^2 x^2 - 4) ( t^2 x^2 - ( x + 1)^2) (
     t^2 x^2 - ( x - 1)^2) \}^{\nu - 1 / 2} \mathd t.
  \end{equation*}
  Note that the polynomial in the integrand factors into $(2 - t x) (t x - (x
  - 1))$ times a factor which approaches $192$ as $x \rightarrow 3^-$ and $t
  \rightarrow 2 / 3$. Hence, as $x \rightarrow 3^-$,
  \begin{equation*}
    p_3 (\nu ; x) \sim \frac{(2 x)^2 \cdot 192^{\nu - 1 / 2}}{\binom{2
     \nu}{\nu}^2 \pi^2} \int_{1 - \frac{1}{x}}^{\frac{2}{x}} \{ (2 - t x) (t x
     - (x - 1)) \}^{\nu - 1 / 2} \mathd t.
  \end{equation*}
  We now relate this integral to the incomplete beta function to find
  \begin{equation*}
    \int_{1 - \frac{1}{x}}^{\frac{2}{x}} \{ (2 - t x) (t x - (x - 1)) \}^{\nu
     - 1 / 2} \mathd t = \binom{2 \nu}{\nu} \frac{\pi (3 - x)^{2 \nu}}{2^{4
     \nu} x} .
  \end{equation*}
  Putting these together, we conclude that \eqref{eq:p3:asy3} holds.
\end{proof}

We are now in a position to generalize \eqref{eq:p3:2d} to higher dimensions.

\begin{theorem}\dueto{Hypergeometric form for $p_3$}
  For any half-integer $\nu \geq 0$ and $x \in [0, 3]$, we have
  \begin{equation}
    \frac{p_3 ( \nu ; x)}{x} = \frac{2 \sqrt{3}}{\pi}  \frac{3^{- 3
    \nu}}{\binom{2 \nu}{\nu}}  \frac{x^{2 \nu} ( 9 - x^2)^{2 \nu}}{3 + x^2} \pFq21{\tfrac{1}{3}, \tfrac{2}{3}}{1 + \nu}{\frac{x^2 ( 9 - x^2)^2}{( 3 + x^2)^3}} .
    \label{eq:p3:hyp}
  \end{equation}
\end{theorem}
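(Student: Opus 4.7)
The plan is to establish \eqref{eq:p3:hyp} by showing that both sides satisfy the same second-order linear ODE on $(0,3)$ and share leading behavior at the regular singular point $x=0$; equality then follows from uniqueness of the Frobenius solution. Write $G(\nu;x)$ for the right-hand side of \eqref{eq:p3:hyp}, so the assertion becomes $p_3(\nu;x) = xG(\nu;x)$. The planar case $\nu = 0$ is \eqref{eq:p3:2d} from \cite{bswz-densities}, and for half-odd-integer $\nu$ the identity may be verified directly against the piecewise-polynomial expressions of Theorem \ref{thm:pn:odd}; the principal content is therefore the case of integer $\nu \geq 1$.

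First, I would verify the endpoint behaviors of $xG(\nu;x)$ against Propositions \ref{prop:p3:asy0} and \ref{prop:p3:asy3}. As $x\to 0^+$, the hypergeometric argument $z(x) \assign x^2(9-x^2)^2/(3+x^2)^3$ vanishes, so the ${}_2F_1$ factor tends to $1$, and the algebraic prefactor contributes $(9^{2\nu}/3)\,x^{2\nu+1}$, whence $xG(\nu;x)\sim \frac{2}{\sqrt{3}\,\pi}\frac{3^{\nu}}{\binom{2\nu}{\nu}}x^{2\nu+1}$, in agreement with Proposition \ref{prop:p3:asy0}. Analogously, as $x\to 3^-$, both $(9-x^2)^{2\nu}\sim (6(3-x))^{2\nu}$ and $z(x)$ vanish, so the ${}_2F_1$ again tends to $1$, and the remaining algebraic factor reproduces the constant of Proposition \ref{prop:p3:asy3}.

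Next, I would produce a common second-order linear ODE on both sides. Extending the three-term recurrence \eqref{eq:rec3} to complex $s$ via Carlson's Theorem (as in Example \ref{eg:rec3}) and translating through the Mellin correspondence $s\leftrightarrow -x\partial_x - 1$ applied to $W_3(\nu;s)=\int_0^3 x^s p_3(\nu;x)\,\mathd x$ yields a second-order differential operator $\mathcal{L}$ with polynomial-in-$x$ coefficients annihilating $p_3(\nu;x)$. On the right, ${}_2F_1(1/3,2/3;1+\nu;z)$ satisfies the Gauss hypergeometric ODE in $z$; composing with the algebraic substitution $z = z(x)$ and multiplying by the algebraic prefactor $x^{2\nu+1}(9-x^2)^{2\nu}/(3+x^2)$ yields a second-order ODE with polynomial-in-$x$ coefficients annihilating $xG(\nu;x)$. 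That the two operators agree up to a left factor is an algorithmic holonomic-closure check, carried out in Koutschan's \texttt{Holo\-nomic\-Functions} package, as was done already in the proofs of Theorem \ref{thm:W3:gf} and Example \ref{eg:W5:rec:dim}.

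Finally, $\mathcal{L}y = 0$ is a second-order linear ODE whose unique Frobenius solution of the form $x^{2\nu+1}(1+O(x^2))$ at $x=0$ is determined up to a scalar multiple, the other basis element being singular at the origin (either $\sim x^{-2\nu}$ or with a logarithmic factor); matching the leading coefficient as verified in the first step therefore forces $p_3(\nu;x) = xG(\nu;x)$ on $(0,3)$, and continuity at the endpoints (Corollary \ref{cor:pn:reg}) extends the identity to $[0,3]$. The main obstacle will be the ODE match in the second step: the substitution $z=z(x)$ is of degree six in $x$, so the coefficients of the resulting differential operator are bulky rational functions, and the verification relies on computer algebra — but it is mechanical in principle, in line with the holonomic arguments invoked elsewhere in the paper.
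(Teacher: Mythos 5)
Your overall strategy -- a common second-order ODE derived from the recursion \eqref{eq:rec3} via Mellin calculus, plus matching of leading asymptotics via Propositions~\ref{prop:p3:asy0} and \ref{prop:p3:asy3} -- is the same as the paper's. But your final step contains a genuine gap: you conclude from the Frobenius analysis at $x=0$ alone that the identity holds on all of $(0,3)$. This fails because $x=1$ is an \emph{interior singular point} of the operator. The annihilating operator is
\begin{equation*}
  x(x^2-9)(x^2-1)D_x^2-(2\nu-1)(5x^4-30x^2+9)D_x+4x(x^2-3)(3\nu-1)(2\nu-1),
\end{equation*}
whose leading coefficient vanishes at $x=1$; equivalently, your substitution $z(x)=x^2(9-x^2)^2/(3+x^2)^3$ satisfies $z(1)=1$, the singular point of $\pFq21{1/3,2/3}{1+\nu}{z}$. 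A solution pinned down by its expansion at $x=0$ is therefore determined only on $(0,1)$ and does not continue uniquely past $x=1$; that this is not a technicality is shown by the planar case, where $p_3(0;x)$ has a genuine logarithmic singularity at $x=1$. The paper closes the interval by running the same uniqueness argument a second time on $(1,3)$, anchored at $x=3$ (where the indices are again $0$ and $2\nu$) and using Proposition~\ref{prop:p3:asy3} -- which is precisely why that proposition is needed at all. You computed the $x\to3^-$ asymptotics but used them only as a consistency check rather than as the logical input for the subinterval $(1,3)$, so as written the proof establishes the identity only on $[0,1]$.

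Two smaller points. First, your description of the second local solution at $x=0$ as ``singular ($\sim x^{-2\nu}$ or logarithmic)'' is incorrect: the indices of the equation for $y=p_3(\nu;x)/x$ at $x=0$ are $0$ and $2\nu$, so the other branch behaves like a nonzero constant (with a possible logarithm when $2\nu\in\mathbb{Z}$); uniqueness still holds for $\nu>0$ because a solution asymptotic to $c\,x^{2\nu}$ can have no component along the index-$0$ branch, but the exponents should be stated correctly, and the case $\nu=0$ (equal indices) is anyway covered by the known planar evaluation \eqref{eq:p3:2d}. Second, your phrase ``the two operators agree up to a left factor'' should just read that both functions are annihilated by the same second-order operator; since both operators are of order two, any nontrivial left factor is a scalar.
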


\begin{proof}
  We observe that both sides of \eqref{eq:p3:hyp} satisfy the differential
  equation $A_3 \cdot y ( x) = 0$, where the differential operator $A_3$ is
  given by
  \begin{equation*}
    x ( x^2 - 9) ( x^2 - 1) D_x^2 - ( 2 \nu - 1) ( 5 x^4 - 30 x^2 + 9) D_x +
     4 x ( x^2 - 3) ( 3 \nu - 1) ( 2 \nu - 1),
  \end{equation*}
  where $D_x = \frac{\mathd}{\mathd x}$. Note that, in the case of $p_3 (\nu ;
  x)$, this is a consequence of the functional equation resulting from
  \eqref{eq:rec3}. The indices of this differential equation at $x = 0$ are
  $0$ and $2 \nu$. It follows from Proposition~\ref{prop:p3:asy0} that $p_3 (
  \nu ; x) / x$ is the unique solution $y ( x)$, on $( 0, 1)$, such that
  \begin{equation*}
    y ( x) \sim \frac{2}{\sqrt{3} \pi}  \frac{3^{\nu}}{\binom{2 \nu}{\nu}}
     x^{2 \nu}
  \end{equation*}
  as $x \rightarrow 0^+$. Since this property is satisfied by the right-hand
  side of \eqref{eq:p3:hyp} as well, it follows that \eqref{eq:p3:hyp} holds
  for all $x \in ( 0, 1)$.

  Similarly, to show \eqref{eq:p3:hyp} for all $x \in ( 1, 3)$, we use the
  fact that the differential equation has indices $0$ and $2 \nu$ at $x = 3$
  as well. In light of Proposition~\ref{prop:p3:asy3}, $p_3 ( \nu ; x) / x$ is
  the unique solution $y ( x)$, on $( 1, 3)$, such that, as $x \rightarrow
  3^-$,
  \begin{equation*}
    y ( x) \sim \frac{1}{2 \sqrt{3} \pi}  \frac{2^{2 \nu} 3^{\nu}}{\binom{2
     \nu}{\nu}}  ( 3 - x)^{2 \nu} .
  \end{equation*}
  Again, it is routine to verify that this property is also satisfied by the
  right-hand side of \eqref{eq:p3:hyp}. Hence, it follows that
  \eqref{eq:p3:hyp} holds for all $x \in ( 1, 3)$.
\end{proof}

As a consequence, we have the following functional equation for the
probability density function $p_3 ( \nu ; x)$. The role of the involution $x
\mapsto \sqrt{4 - x^2}$ for $2$ steps, see \eqref{eq:p2:fun}, is now played by
the involution $x \mapsto \frac{3 - x}{x + 1}$.

\begin{corollary}\dueto{Functional equation for $p_3$}\label{cor:fe3}
  For any half-integer $\nu \geq 0$ and $x \in [0, 3]$, the function $F ( x)
  \assign p_3 ( \nu ; x)/x$ satisfies the functional equation
  \begin{equation}
    F ( x) = \left( \frac{1 + x}{2} \right)^{6 \nu - 2} F \left( \frac{3 -
    x}{1 + x} \right) . \label{eq:p3:fun}
  \end{equation}
\end{corollary}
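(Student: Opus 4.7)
The plan is to derive the corollary as an immediate algebraic consequence of the hypergeometric representation \eqref{eq:p3:hyp} just proved for $F(x) = p_3(\nu;x)/x$. Write
\[
  F(x) = C_\nu \cdot \frac{x^{2\nu}(9-x^2)^{2\nu}}{3+x^2} \cdot {}_2F_1\!\left(\genfrac{}{}{0pt}{}{\tfrac{1}{3},\tfrac{2}{3}}{1+\nu}\,\middle|\, Z(x)\right),
  \qquad Z(x) \assign \frac{x^2(9-x^2)^2}{(3+x^2)^3},
\]
where $C_\nu$ is the $x$-independent prefactor. Setting $\varphi(x) \assign (3-x)/(1+x)$, the claimed functional equation \eqref{eq:p3:fun} is equivalent, after dividing \eqref{eq:p3:hyp} evaluated at $x$ by the same expression evaluated at $\varphi(x)$, to two separate identities: first, invariance $Z(\varphi(x)) = Z(x)$, so that the ${}_2F_1$ factor cancels between the two sides; and second, an explicit ratio for the algebraic prefactor.

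First I would verify the invariance of $Z$ under the involution $\varphi$. Direct substitution gives the three elementary identities
\[
  9 - \varphi(x)^2 = \frac{8x(3+x)}{(1+x)^2}, \qquad \varphi(x)(9-\varphi(x)^2) = \frac{8x(9-x^2)}{(1+x)^3}, \qquad 3 + \varphi(x)^2 = \frac{4(3+x^2)}{(1+x)^2}.
\]
Cubing the last and squaring the middle yields
\[
  Z(\varphi(x)) = \frac{64x^2(9-x^2)^2/(1+x)^6}{64(3+x^2)^3/(1+x)^6} = Z(x),
\]
so the hypergeometric factor is indeed invariant.

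Next I would use the same three identities to compute the ratio of the remaining rational/algebraic prefactor:
\[
  \frac{\varphi(x)^{2\nu}(9-\varphi(x)^2)^{2\nu}/(3+\varphi(x)^2)}{x^{2\nu}(9-x^2)^{2\nu}/(3+x^2)}
  = \left(\frac{8}{(1+x)^3}\right)^{\!2\nu}\!\cdot\frac{(1+x)^2}{4}
  = 2^{6\nu-2}(1+x)^{2-6\nu} = \left(\frac{1+x}{2}\right)^{2-6\nu}.
\]
Consequently $F(\varphi(x))/F(x) = ((1+x)/2)^{2-6\nu}$, which is exactly \eqref{eq:p3:fun} after inversion.

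There is no real obstacle: once the hypergeometric form \eqref{eq:p3:hyp} is in hand, the proof reduces to the algebraic identities above, and the only mildly delicate point is that $\varphi$ maps $[0,3]$ to itself as an involution (with fixed point $x=1$) so that both sides of \eqref{eq:p3:fun} lie in the support of $p_3(\nu;\cdot)$; this is immediate from the explicit form of $\varphi$. The fixed point $x=1$ forces $(1/2)^{2-6\nu} = 1$ to be consistent, which is obviously fine since it just says $F(1) = F(1)$.
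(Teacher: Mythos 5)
Your proof is correct and takes essentially the same route as the paper: the paper's entire proof is the assertion that the hypergeometric right-hand side of \eqref{eq:p3:hyp} ``clearly'' satisfies \eqref{eq:p3:fun}, and your computation simply supplies the omitted algebra, namely the invariance of the ${}_2F_1$ argument under the involution $x\mapsto(3-x)/(1+x)$ and the ratio of the algebraic prefactors. The only blemish is the inessential closing aside: at the fixed point $x=1$ the factor is $\left(\tfrac{1+1}{2}\right)^{6\nu-2}=1$, not $(1/2)^{2-6\nu}$, though this consistency check plays no role in the argument.
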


\begin{proof}
  The hypergeometric right-hand side of \eqref{eq:p3:hyp} clearly satisfies
  the functional equation \eqref{eq:p3:fun}.
\end{proof}

It would be very interesting to have a probabilistic interpretation of this
functional equation satisfied by the densities $p_3 ( \nu ; x)$.

\begin{remark}
  We note the following relation between the functional equations for the two-
  and three-step case in \eqref{eq:p2:fun} and \eqref{eq:p3:fun},
  respectively. Namely, if
  \begin{equation*}
    X = \frac{x + y}{2} + 1, \hspace{1em} Y = \frac{x - y}{2 i},
  \end{equation*}
  then the relation $y = \frac{3 - x}{1 + x}$ translates into $Y^2 = 4 - X^2$.
  It is natural to wonder whether this observation might help explain the
  functional equation \eqref{eq:p3:fun} for three-step densities.
\end{remark}

\begin{example}
  \label{eg:p3:dimrec}As a consequence of the contiguity relations satisfied
  by hypergeometric functions, we derive from \eqref{eq:p3:hyp} that the
  densities $p_3 ( \nu ; x)$ satisfy the dimensional recursion
  \begin{eqnarray*}
    p_3 ( \nu + 1 ; x) & = & \tfrac{\nu ( \nu + 1)^2}{6 ( 2 \nu + 1) ( 3 \nu +
    1) ( 3 \nu + 2)}  ( x^2 - 3) ( x^2 - 6 x - 3) ( x^2 + 6 x - 3) p_3 ( \nu ;
    x)\\
    &  & + \tfrac{\nu^2 ( \nu + 1)^2}{12 ( 2 \nu - 1) ( 2 \nu + 1) ( 3 \nu +
    1) ( 3 \nu + 2)} x^2 ( x^2 - 1)^2 ( x^2 - 9)^2 p_3 ( \nu - 1 ; x),
  \end{eqnarray*}
  where $\nu \geq 1$.
\end{example}

\begin{example}
  It follows from the hypergeometric formula \eqref{eq:p3:hyp} that the
  densities $p_3 ( \nu ; x)$, for $\nu > 0$, take the special values
  \begin{equation*}
    p_3 (\nu ; 1) = \frac{3}{4 \pi^2}  \frac{2^{6 \nu}}{\nu}  \frac{(\nu
     !)^5}{(2 \nu) ! (3 \nu) !} .
  \end{equation*}
  In particular, $p_3 (\nu ; 1) \in \mathbb{Q}$ in odd dimensions, and $p_3
  (\nu ; 1) \in \mathbb{Q} \cdot \frac{1}{\pi^2}$ in even dimensions.
\end{example}

\begin{example}\label{FE:diff}
  From \eqref{eq:pn:1:d} and the functional equation of Corollary~\ref{cor:fe3}, for $\nu>1$, we learn that
  \begin{equation*}
    p_3'''(\nu;1) = \frac92 \nu p_3''(\nu;1) - \frac38 (3\nu-1)(6\nu^2+5\nu+2) p_3(\nu;1),
  \end{equation*}
  but learn nothing about $p_3''(\nu;1)$.
\end{example}

\subsection{Densities of $4$-step walks}\label{ssec:den4}

It is shown in \cite{bswz-densities} that, in the planar case, the
probability density of the distance to the origin after four steps admits the
hypergeometric closed form
\begin{equation}
  p_4 ( 0 ; x) = \frac{2}{\pi^2}  \frac{\sqrt{16 - x^2}}{x} \Re \pFq32{\tfrac{1}{2}, \tfrac{1}{2}, \tfrac{1}{2}}{\tfrac{5}{6}, \tfrac{7}{6}}{\frac{( 16 - x^2)^3}{108 x^4}} . \label{eq:p4:2d}
\end{equation}
In this section we obtain a higher-dimensional analog of this evaluation by
demonstrating that hypergeometric formulae can be given for the densities $p_4
( \nu ; x)$ in all even dimensions.

\begin{example}
  Since the Mellin transform of the density $p_4 ( \nu ; x)$ is given by the
  corresponding probability moments $W_4 (\nu ; s - 1)$, the recursion
  \eqref{eq:rec4} for the moments translates into a differential equation for
  the density. We refer to \cite{bswz-densities} for details. We thus find
  that $p_4 ( \nu ; x) / x$ is annihilated by the differential operator
  \begin{eqnarray}
    &  & x^3 ( x^2 - 16) ( x^2 - 4) D_x^3 - 3 ( 2 \nu - 1) x^2 ( 3 x^4 - 40
    x^2 + 64) D_x^2 \nonumber\\
    &  & + ( 2 \nu - 1) x ( ( 52 \nu - 19) x^4 - ( 416 \nu - 152) x^2 + 256
    \nu - 64) D_x \nonumber\\
    &  & - 8 ( 4 \nu - 1) ( 3 \nu - 1) ( 2 \nu - 1) x^2 ( x^2 - 4),
    \label{eq:p4:de}
  \end{eqnarray}
  where $D_x = \frac{\mathd}{\mathd x}$. We conclude that, in the planar case,
  $p_4 (0 ; x)$ satisfies the differential equation
  \begin{eqnarray}
    0 & = & x^3 ( x^2 - 16) ( x^2 - 4) p_4''' (0 ; x) + 6 x^2 ( x^4 - 10 x^2)
    p_4'' (0 ; x) \nonumber\\
    &  & + x ( 7 x^4 - 32 x^2 + 64) p_4' (0 ; x) + ( x^4 - 64) p_4 (0 ; x),
    \label{eq:p4:2d:de}
  \end{eqnarray}
  which agrees (up to a typo there) with \cite[(2.7)]{bswz-densities}.

  The differential equation \eqref{eq:p4:2d:de} is the symmetric square of a
  second order differential equation and, moreover, admits modular
  parametrization \cite[Remark~4.11]{bswz-densities}. These ingredients
  ultimately lead to the hypergeometric closed form \eqref{eq:p4:2d}. The
  differential equation associated with \eqref{eq:p4:de}, on the other hand,
  is a symmetric square of a second order differential equation only in the
  cases $\nu = 0$ and $\nu = 1 / 2$.
\end{example}

\begin{example}
  In three dimensions, that is when $\nu = 1 / 2$, the density is
  \begin{equation}
    p_4 (1 / 2 ; x) = \frac{x}{16}  (8 - x^2 + 2 (x - 2) |x - 2|),
    \label{eq:p4:3d:2}
  \end{equation}
  for $x \in [0, 4]$. This is equivalent to \eqref{eq:p4:3d} and may be derived
  directly from \eqref{eq:pn:bessel} or from Theorem~\ref{thm:pn:odd}.
\end{example}

\begin{example}
  Basic Mellin calculus connects the asymptotic behaviour of $p_n ( \nu ; x)$
  as $x \rightarrow 0^+$ with the nature of the poles of $W_n ( \nu ; s)$ in
  the left half-plane. For instance, from the explicit information in
  Example~\ref{eg:W4:poles} on the poles of $W_4 ( 1 ; s)$ at $s = - 4$ and $s
  = - 6$, we conclude that
  \begin{align*}
    p_4 ( 1 ; x) = \frac{4}{\pi^2} x^3 + \left( \frac{1}{16 \pi^2} -
    \frac{9 \log ( 2)}{4 \pi^2} \right) x^5 + \frac{3}{4 \pi^2} \log ( x) x^5
    + O ( x^7)
  \end{align*}
  as $x \rightarrow 0^+$.
\end{example}

The following result connects the $4$-step density in $d$ dimensions with the
corresponding density in $d - 2$ dimensions. In particular, using
\eqref{eq:p4:2d} and \eqref{eq:p4:3d:2} as base cases, this provides a way to
obtain explicit formulas for the densities $p_4 (\nu ; x)$ in all dimensions.

\begin{theorem}\dueto{Dimensional recursion for $p_4$}
  \label{thm:p4:rec}For $0 \le x \le 4$ and any half-integer $\nu \geq 0$,
  \begin{eqnarray}\label{eq:p4nu}
    &  & \frac{3 (2 \nu + 1) (3 \nu + 1) (3 \nu + 2) (4 \nu + 1) (4 \nu +
    3)}{64 (\nu + 1)^3} p_4 (\nu + 1 ; x)\\ \nonumber
    & = & - a (\nu ; x^2 / 8) p_4 (\nu ; x) + x b (\nu ; x^2 / 8) p_4' (\nu ;
    x) - c (\nu ; x^2 / 8) p_4'' (\nu ; x),
  \end{eqnarray}
  where
  \begin{eqnarray*}
    a (\nu ; x) & \assign & (4 \nu - 1) (6 \nu - 1) x^4 + 2 (100 \nu^2 - 23 \nu - 1)
    x^3 + 2 (2 \nu + 3) (12 \nu + 1) x^2\\
    &  & - 2 (60 \nu^2 + 13 \nu + 1) x + (2 \nu + 1) (4 \nu + 1),\\
    b (\nu ; x) & \assign & (10 \nu - 3) x^4 + 5 (12 \nu - 1) x^3 - \tfrac{21}{2} (8
    \nu - 1) x^2 - 20 \nu x + 6 \nu + 1,\\
    c (\nu ; x) & \assign & 4 x (x - 2) (2 x - 1) (x^2 + 5 x + 1) .
  \end{eqnarray*}
\end{theorem}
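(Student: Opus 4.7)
The plan is to verify \eqref{eq:p4nu} by passing to the Mellin side and reducing the resulting identity between moments to a consequence of the two recursions already established: the moment recursion \eqref{eq:rec4} and the dimensional recursion \eqref{eq:W4:rec:nu:s}. This is the density-level analogue of the procedure behind the generating-function identity \eqref{eq:d41} in Example~\ref{eg:W4:ogf}.

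First, multiply both sides of \eqref{eq:p4nu} by $x^s$ and integrate over the support $[0,4]$ of $p_4(\nu;\cdot)$. Using that $p_4(\nu;x)$ vanishes at $0$ (asymptotically like $x^{2\nu+1}$) and at $4$ (support property), together with the regularity given by Corollary~\ref{cor:pn:reg} for $\nu \geq 1$ (and the explicit piecewise polynomial form in the base case $\nu=\tfrac{1}{2}$), two rounds of integration by parts apply with vanishing boundary terms, yielding $\int_0^4 x^{s+2k+1} p_4'(\nu;x)\,\mathd x = -(s+2k+1)\, W_4(\nu;s+2k)$ and $\int_0^4 x^{s+2k} p_4''(\nu;x)\,\mathd x = (s+2k)(s+2k-1)\, W_4(\nu;s+2k-2)$. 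Expanding $a(\nu;x^2/8)$, $b(\nu;x^2/8)$, $c(\nu;x^2/8)$ as polynomials in $x^2$ (noting that $c$ has no constant term, so no $W_4(\nu;s-2)$ appears), the Mellin transform of the identity becomes
\[
 C(\nu)\, W_4(\nu+1;s) = \sum_{j\in\{0,2,4,6,8\}} P_j(s,\nu)\, W_4(\nu;s+j),
\]
where $C(\nu) = \tfrac{3(2\nu+1)(3\nu+1)(3\nu+2)(4\nu+1)(4\nu+3)}{64(\nu+1)^3}$ and each $P_j$ is a polynomial in $s$ with coefficients rational in $\nu$.

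Next I would use the moment recursion \eqref{eq:rec4}, extended to complex $s$ via Carlson's Theorem~\ref{thm:carlson}, to rewrite each $W_4(\nu;s+j)$ as a $\mathbb{Q}(s,\nu)$-combination of $W_4(\nu;s+2)$ and $W_4(\nu;s+4)$. Independently, shifting $\nu\to\nu+1$ in the dimensional recursion \eqref{eq:W4:rec:nu:s} expresses $W_4(\nu+1;s)$ itself in the same two-moment basis via
\[
 3(s+2)(s+6\nu+6)(s+8\nu+8)(s+8\nu+6)\, W_4(\nu+1;s) = -256(\nu+1)^3(s+4\nu+4)\, W_4(\nu;s+2) + 8(\nu+1)^3(5s+32\nu+26)\, W_4(\nu;s+4).
\]
Substituting both into the transferred identity reduces the theorem to a single polynomial identity in $s$ with coefficients in $\mathbb{Q}(\nu)$. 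Once that identity is verified, the pointwise statement \eqref{eq:p4nu} follows by injectivity of the Mellin transform on continuous functions supported on $[0,4]$.

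The main obstacle is the size and opacity of the terminal polynomial identity: reducing each $P_j$ via \eqref{eq:rec4} inflates the expressions substantially, and these must then cancel against the rational expression obtained for $W_4(\nu+1;s)$. The reduction is nevertheless deterministic and is of exactly the kind dispatched by the \texttt{HolonomicFunctions} package used elsewhere in the paper (cf.\ the proofs of Theorems~\ref{thm:W3:gf} and~\ref{thm:W3:Re}). A minor technical point is that the integration-by-parts step uses regularity of $p_4(\nu;x)$ at both endpoints, which holds for all half-integer $\nu \ge 0$.
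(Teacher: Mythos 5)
Your proposal is correct in substance and rests on exactly the same two ingredients as the paper's proof --- the three-term moment recursion \eqref{eq:rec4} and the dimensional recursion \eqref{eq:W4:rec:nu:s}, linked to the densities by Mellin duality, with the final (large) verification delegated to computer algebra. The difference is one of organization and direction. The paper first translates both moment recursions into differential relations for the densities (the third-order operator \eqref{eq:p4:de} annihilating $p_4(\nu;x)/x$, and a seven-term relation between $p_4(\nu;\cdot)$ and $p_4(\nu+1;\cdot)$), and then uses a Gr\"obner basis computation in the corresponding Ore algebra to exhibit the claimed relation as an element of the left ideal these generate; the identity is thereby \emph{derived}. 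You instead take the claimed identity as given, push it to the Mellin side, and \emph{verify} it by reducing every $W_4(\nu;s+j)$ to the two-element basis $\{W_4(\nu;s+2),W_4(\nu;s+4)\}$ via \eqref{eq:rec4} and comparing against the shifted form of \eqref{eq:W4:rec:nu:s}; your normal-form reduction modulo the shift recurrences is the moment-side counterpart of the paper's ideal-membership computation. What your route buys is a cleaner logical structure (a finite rational-function identity in $s$ and $\nu$ certifies the theorem, with no need to trust that the Gr\"obner elimination found everything); what it costs is that the derivation of the coefficient polynomials $a$, $b$, $c$ is unexplained --- you can only confirm them, not discover them.

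One caveat you understate: the regularity needed to integrate by parts twice and to make sense of $p_4''(\nu;x)$ is not merely an endpoint issue. For $\nu=0$ the density fails to be differentiable at the interior point $x=2$ (and for $\nu=\tfrac12$ it is $C^1$ but not $C^2$ there), so for small $\nu$ the identity must be interpreted away from $x=2$ and the boundary terms at $x=2^{\pm}$ in your integration by parts must be checked to cancel --- which they do because $c(\nu;x^2/8)$ vanishes at $x=2$ and $x=4$. The paper itself flags this point when specializing \eqref{eq:p4nu} at $x=2$, so this is a gap to be patched rather than a flaw in the approach.
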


\begin{proof}
  We have already observed in \eqref{eq:rec4} and \eqref{eq:W4:rec:nu:s} that
  the moments $W_4 (\nu ; s)$ satisfy a functional equation connecting $W_4
  (\nu ; s)$, $W_4 (\nu ; s + 2)$, $W_4 (\nu ; s + 4)$, as well as a
  functional equation relating $W_4 (\nu + 1 ; s)$, $W_4 (\nu ; s + 2)$, $W_4
  (\nu ; s + 4)$. The usual Mellin calculus translates the first of these two
  into the third-order differential equation recorded in \eqref{eq:p4:de},
  while the second is translated into a more complicated equation involving
  seven terms ranging from $p_4 (\nu ; x)$ to $p_4^{(4)} (\nu + 1 ; x)$. With
  assistance of the \emph{Mathematica} package \texttt{Holo\-nomic\-Functions}, which
  accompanies Koutschan's thesis \cite{koutschan-phd}, we compute a
  Gr\"obner basis for the ideal that these two relations generate and use it
  to find the claimed relation involving $p_4 (\nu + 1 ; x)$ as well as $p_4
  (\nu ; x)$ and its first two derivatives.
\end{proof}

\begin{example}\dueto{Hypergeometric form for $p_4(1;x)$}\label{eg:p41}
  By combining \eqref{eq:p4:2d} with
  Theorem~\ref{thm:p4:rec} and \eqref{eq:hyp:D}, we conclude, for instance,
  that the $4$-step density in four dimensions, for $x \in (2, 4)$, can be
  hypergeometrically represented as
  \begin{equation}
    p_4 ( 1 ; x) = \frac{( 16 - x^2)^{5 / 2}}{( 24 \pi)^2 x} \left[ - ( x^2 +
    8)^2 G_0 + \frac{r ( x^2)}{7!!\, x^4} G_1 + \frac{( 16 - x^2)^3 s (
    x^2)}{13!!\, ( 2 / 3)^4 x^8} G_2 \right], \label{eq:p41}
  \end{equation}
  with $( 2 n + 1) !! = ( 2 n + 1) ( 2 n - 1) \cdots 3 \cdot 1$ and
  \begin{equation*}
    G_{\lambda} \assign \pFq32{\tfrac{1}{2} + \lambda, \tfrac{1}{2} + \lambda, \tfrac{1}{2} +
       \lambda}{\tfrac{5}{6} + \lambda, \tfrac{7}{6} + \lambda}{\frac{( 16 - x^2)^3}{108 x^4}}
  \end{equation*}
  as well as
  \begin{align*}
    r ( x) & \assign x^5 + 55 x^4 + 1456 x^3 + 25664 x^2 - 90112 x - 262144,\\
    s ( x) & \assign ( x - 4) ( x + 32)^2 ( x^2 + 40 x + 64) .
  \end{align*}
  As in \eqref{eq:p4:2d}, taking the real part of the hypergeometric functions
  provides a formula for $p_4 ( 1 ; x)$ which is valid for $x \in (0, 2)$ as
  well. The above hypergeometric formula also provides a modular
  parametrization (in a suitably generalized sense) for $p_4 ( 1 ; x)$. This
  is a consequence of
  \begin{equation*}
    p_4 ( 0 ; 8 i x ( \tau)) = \frac{6 ( 2 \tau + 1)}{\pi} f ( \tau),
  \end{equation*}
  which is proved in \cite[(4.16)]{bswz-densities} and which involves the
  modular function
  \begin{equation*}
    x ( \tau) = \frac{\eta ( 2 \tau)^3 \eta ( 6 \tau)^3}{\eta ( \tau)^3 \eta
     ( 3 \tau)^3}
  \end{equation*}
  and the weight $2$ modular form
  \begin{equation*}
    f ( \tau) = \eta ( \tau) \eta ( 2 \tau) \eta ( 3 \tau) \eta ( 6 \tau) .
  \end{equation*}
  Differentiating this modular parametrization of $p_4 ( 0 ; x)$, we find that
  $p_4 ( 1 ; 8 i x ( \tau))$ can be expressed in terms of modular quantities
  such as $f' ( \tau) / x' ( \tau)$. Then inductively we obtain the like result for higher even dimensions.
\end{example}

\begin{example}\dueto{$p_4(\nu;2)$ in even dimensions}
  Motivated by \cite[Corollary 4.8]{bswz-densities}, which proves that
  \begin{equation*}
    p_4 ( 0 ; 2) = \frac{2^{7 / 3} \pi}{3 \sqrt{3}} \Gamma \left( \frac{2}{3}
     \right)^{- 6} = \frac{\sqrt{3}}{\pi} W_3 ( 0 ; - 1),
  \end{equation*}
  or $\pi / \sqrt{3} p_4 (0 ; 2) = W_3 (0 ; - 1)$, we discover that, for
  integers $\nu \geq 0$, $\pi / \sqrt{3} p_4 (\nu ; 2)$ is a rational
  combination of the moments $W_3 (0 ; - 1)$ and $W_3 (0 ; 1)$. For instance,
  \begin{equation*}
    \frac{\pi}{\sqrt{3}} p_4 ( 1 ; 2) = - \frac{14}{3} W_3 ( 0 ; - 1) +
     \frac{10}{3} W_3 ( 0 ; 1)
  \end{equation*}
  and
  \begin{equation*}
    \frac{\pi}{\sqrt{3}} p_4 ( 2 ; 2) = \frac{6656}{315} W_3 ( 0 ; - 1) -
     \frac{704}{63} W_3 ( 0 ; 1) .
  \end{equation*}
  To deduce the first of these from equation \eqref{eq:p4nu} takes a little care as $p_4(1;x)$ is not differentiable at $2$ and one must take the limit from the left in \eqref{eq:p41}; likewise for higher dimensions (note that $c(\nu;2)=0$).
  Theorem~\ref{thm:p4:rec} specializes to
  \begin{eqnarray*}
    &  & (2 \nu + 1) (3 \nu + 1) (3 \nu + 2) (4 \nu + 1) (4 \nu + 3) p_4 (\nu
    + 1 ; 2)\\
    & = & 4 (\nu + 1)^3 [(6 \nu + 1) (12 \nu - 7) p_4 (\nu ; 2) - 30 (6 \nu -
    1) p_4' (\nu ; 2)] .
  \end{eqnarray*}
  On the other hand, specializing a corresponding relation among $p_4 (\nu ;
  x)$, $p_4 (\nu + 1 ; x)$, $p_4 (\nu + 2 ; x)$ and $p_4' (\nu ; x)$, which
  one obtains as in the proof of Theorem~\ref{thm:p4:rec}, we find
  \begin{eqnarray*}
    &  & (2 \nu - 1) (2 \nu + 1) (3 \nu - 2) (3 \nu - 1) (3 \nu + 1) (3 \nu +
    2) (4 \nu + 1) (4 \nu + 3) p_4 (\nu + 1 ; 2)\\
    & = & - 16 (\nu + 1)^3 (2 \nu - 1) (3 \nu - 2) (3 \nu - 1) (54 \nu^2 + 1)
    p_4 (\nu ; 2)\\
    &  & + 576 \nu^3 (\nu + 1)^3 (6 \nu - 5) (6 \nu - 1) p_4 (\nu - 1 ; 2) .
  \end{eqnarray*}
\end{example}

\subsection{Densities of $5$-step walks}\label{ssec:den5}

The $5$-step densities for dimensions up to $9$ are plotted in
Figure~\ref{fig:p5}.

\begin{figure}[h]
  \centering
  \includegraphics[width=9cm]{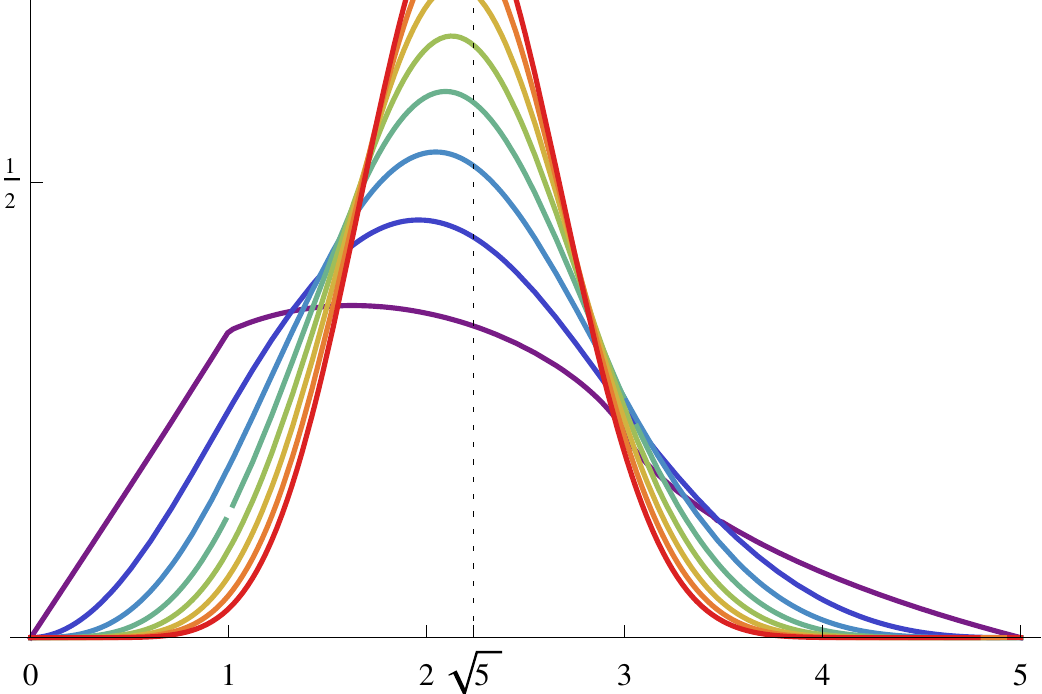}
  \caption{\label{fig:p5}$p_5 (\nu ; x)$ for $\nu = 0, \tfrac{1}{2}, 1,
  \ldots, \frac{7}{2}$}
\end{figure}

A peculiar feature of the planar density is its striking (approximate)
linearity on the initial interval $[0, 1]$. This phenomenon was already
observed by Pearson \cite{Pea06}, who commented on $p_5 (0 ; x) / x$,
between $x = 0$ and $x = 1$, as follows:

\begin{quote}
  ``the graphical construction, however carefully reinvestigated, did not
  permit of our considering the curve to be anything but a {\emph{straight}}
  line$\ldots$ Even if it is not absolutely true, it exemplifies the
  extraordinary power of such integrals of $J$ products [that is,
  \eqref{eq:pn:bessel}] to give extremely close approximations to such simple
  forms as horizontal lines.''
\end{quote}

Pearson's comment was revisited by Fettis \cite{fettis63}, who rigorously
established the nonlinearity. In \cite[Theorem~5.2]{bswz-densities}, it is
shown that the density satisfies a certain fourth-order differential equation,
and that, for small $x > 0$,
\begin{equation}
  p_5 (0 ; x) = \sum_{k = 0}^{\infty} r_{5, k} \hspace{0.25em} x^{2 k + 1},
  \label{eq:p5:2d:0}
\end{equation}
where
\begin{eqnarray}
  &  & ( 15 (2 k + 2) (2 k + 4))^2 r_{5, k + 2} \nonumber\\
  & = & ( 259 (2 k + 2)^4 + 104 (2 k + 2)^2) r_{5, k + 1} \nonumber\\
  &  & - ( 35 (2 k + 1)^4 + 42 (2 k + 1)^2 + 3) r_{5, k} + (2 k)^4 r_{5, k -
  1},  \label{eq:r5:2d:rec}
\end{eqnarray}
with initial conditions $r_{5, - 1} = 0$ and
\begin{equation*}
  r_{5, k} = \operatorname{Res}_{s = - 2 k - 2} W_5 (0 ; s) .
\end{equation*}
Numerically, we thus find that
\begin{equation*}
  p_5 (0 ; x) = 0.329934 \hspace{0.25em} x + 0.00661673 \hspace{0.25em} x^3 +
   0.000262333 \hspace{0.25em} x^5 + O (x^7),
\end{equation*}
which reflects the approximate linearity of $p_5 (0 ; x)$ for small $x$.

By \eqref{eq:pn:diffrel0}, the residue $r_{5, 0} = p_5' (0 ; 0)$ can be
expressed as $p_4 (0 ; 1)$. The modularity of $p_4$ in the planar case,
combined with the \emph{Chowla--Selberg formula} \cite{sc67}, then permits us to obtain the
explicit formula \cite[Theorem~5.1]{bswz-densities}
\begin{equation}
  r_{5, 0} = \frac{\sqrt{5}}{40}  \hspace{0.25em} \frac{\Gamma ( \frac{1}{15})
  \Gamma ( \frac{2}{15}) \Gamma ( \frac{4}{15}) \Gamma ( \frac{8}{15})}{\pi^4}
  . \label{eq:r50}
\end{equation}
Moreover, high-precision numerical calculations lead to the conjectural
evaluation \cite[(5.3)]{bswz-densities}
\begin{equation}
  r_{5, 1} = \frac{13}{225} r_{5, 0} - \frac{2}{5 \pi^4}  \frac{1}{r_{5, 0}},
  \label{eq:r51}
\end{equation}
and the recursion \eqref{eq:r5:2d:rec} then implies that all the coefficients
$r_{5, k}$ in \eqref{eq:p5:2d:0} can be expressed in terms of $r_{5, 0}$.

\begin{theorem}\dueto{Residues of $W_5(0;s)$}\label{thm:r51}
  The conjectural relation \eqref{eq:r51} is true.
\end{theorem}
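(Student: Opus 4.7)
The plan is to convert \eqref{eq:r51} into a local identity at $x=1$ for the density $p_4(0;x)$, and then verify that identity by a Wronskian-type first integral of \eqref{eq:p4:2d:de} combined with an asymptotic analysis at $x=0$.

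First I would apply the density recursion \eqref{eq:pn:rec} with $n=5$, $\nu=0$, expanding the integrand as a Taylor series in $x$ about $x=0$; the elementary beta integrals $\int_{-1}^1\lambda^{2j}(1-\lambda^2)^{-1/2}\mathd\lambda$ give
\[
p_5(0;x) = p_4(0;1)\,x + \tfrac{1}{4}\bigl[p_4''(0;1) - p_4'(0;1) + p_4(0;1)\bigr]x^3 + O(x^5),
\]
so, reading off \eqref{eq:p5:2d:0}, one recovers $r_{5,0}=p_4(0;1)$ and obtains the identification $r_{5,1} = \tfrac{1}{4}[p_4''(0;1) - p_4'(0;1) + p_4(0;1)]$. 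The derivation of \eqref{eq:pn:1:d} in fact goes through for $\nu=0$, $n=4$ because the relevant boundary integrand $t^2 j_0^5(t) = O(t^{-1/2})$ still decays at infinity, and yields $p_4'(0;1)=\tfrac{3}{5}p_4(0;1)$. Substituting and multiplying by $p_4(0;1)$, the conjectural identity \eqref{eq:r51} becomes the quadratic local relation
\[
p_4(0;1)\,p_4''(0;1) = -\tfrac{38}{225}\,p_4(0;1)^2 - \tfrac{8}{5\pi^4}.
\]

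Second, I would exploit the identification in \cite[Remark~4.11]{bswz-densities} of the third-order operator in \eqref{eq:p4:2d:de} as the symmetric square of a second-order operator $L=D_x^2+P(x)D_x+Q(x)$ with $P(x)=2x(x^2-10)/[(x^2-16)(x^2-4)]$. A short calculation using $u''=-Pu'-Qu$ shows that any factorization $y=u(x)v(x)$ of a symmetric-square solution produces the first integral
\[
(y')^2 - 2yy'' - 2P(x)\,yy' - 4Q(x)\,y^2 = (uv'-u'v)^2,
\]
and by Abel's theorem the right-hand side has the form $C^2/[(x^2-16)(x^2-4)]$ on each connected component of the regular locus of $L$. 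Evaluating at $x=1\in(0,2)$, with $P(1)=-\tfrac{2}{5}$, $Q(1)=\tfrac{53}{180}$, and $p_4'(0;1)=\tfrac{3}{5}p_4(0;1)$, this first integral collapses to $p_4(0;1)\,p_4''(0;1) = -\tfrac{38}{225}p_4(0;1)^2 - \tfrac{W(1)^2}{2}$. Comparing with the first step, the proof of \eqref{eq:r51} is thereby reduced to the single Wronskian evaluation $W(1)^2=\tfrac{16}{5\pi^4}$, equivalently $C^2 = 144/\pi^4$ on the branch $(0,2)$.

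Third, since $C^2$ is constant on $(0,2)$, it suffices to evaluate the first integral in the limit $x\to 0^+$. The averaged large-$t$ behaviour $J_0^4(t)\sim 3/(2\pi^2 t^2)$, substituted into the representation $p_4(0;x)=x\int_0^\infty t J_0(tx)J_0^4(t)\,\mathd t$, gives the leading asymptotic $p_4(0;x)\sim \tfrac{3}{2\pi^2}\,x\log(1/x)$ as $x\to 0^+$. Together with $P(x)\sim -5x/16$ and $Q(x)\sim 1/(4x^2)+1/32$ at $x=0$, plugging these (and the derived asymptotics for $p_4'$ and $p_4''$) into the first integral, one finds that all $\log$-dependence cancels and the finite limit $\lim_{x\to 0^+}D(p_4)(x) = (3/(2\pi^2))^2 = 9/(4\pi^4)$ emerges. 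Multiplying by $(x^2-16)(x^2-4)\big|_{x=0}=64$ gives $C^2=144/\pi^4$ as required.

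The hard part will be Step~3: because of the logarithmic asymptotic of $p_4(0;x)$ at $0$ and the $1/x^2$ singularity of $Q$, several terms in $D(p_4)(x)$ are of comparable size, and the finite limit appears only after a careful cancellation of all $\log(1/x)$ contributions. This bookkeeping can be justified rigorously either by developing the Bessel asymptotics to the requisite order (Hankel's expansion of $J_0$ and a Mellin analysis of $\int_0^\infty t^{1-s}J_0^4(t)\,\mathd t$ near $s=0$), or, more elegantly, by reading off the leading and subleading terms from the modular parametrization recalled in Example~\ref{eg:p41}, whose Legendre-type period relation makes the $\pi^4$-denominator manifest.
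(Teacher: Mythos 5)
Your route is genuinely different from the paper's (which lifts the problem to four dimensions via the dimensional recursion \eqref{eq:W5:rec:dim}, reduces \eqref{eq:r51} to the single evaluation $p_4(1;1)=\tfrac16 r_{5,0}+\tfrac{105}{16\pi^4 r_{5,0}}$, and then appeals to the hypergeometric/modular form of $p_4(1;x)$ and Chowla--Selberg), and most of it checks out: the expansion of \eqref{eq:pn:rec} does give $r_{5,1}=\tfrac14\bigl[p_4''(0;1)-p_4'(0;1)+p_4(0;1)\bigr]$; the symmetric-square first integral $(y')^2-2yy''-2Pyy'-4Qy^2=C^2/\bigl[(x^2-16)(x^2-4)\bigr]$ is correct with $P(1)=-\tfrac25$, $Q(1)=\tfrac{53}{180}$; and the limit of that expression as $x\to0^+$ really is $\alpha^2=9/(4\pi^4)$ with all logarithms cancelling (this uses that the indicial roots of \eqref{eq:p4:2d:de} at $0$ are $1,1,1$ but that $p_4(0;x)$ carries only a single power of $\log x$, i.e.\ that $W_4(0;s)$ has a double rather than triple pole at $s=-2$ --- worth saying explicitly). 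So $C^2=144/\pi^4$ is sound.

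The genuine gap is the relation $p_4'(0;1)=\tfrac35\,p_4(0;1)$, and it is not a peripheral technicality: eliminating $p_4''(0;1)$ between your two relations at $x=1$ gives
\begin{equation*}
  \Bigl(p_4'(0;1)-\tfrac35 p_4(0;1)\Bigr)^2 \;=\; 8\,p_4(0;1)\Bigl[\,r_{5,1}-\tfrac{13}{225}r_{5,0}+\tfrac{2}{5\pi^4 r_{5,0}}\Bigr],
\end{equation*}
so, given everything else you prove, the derivative relation is \emph{exactly equivalent} to \eqref{eq:r51}. You justify it by asserting that the derivation of \eqref{eq:pn:1:d} ``goes through'' at $(n,\nu)=(4,0)$, but the paper restricts that identity to $\nu>0$ for a reason: the decay of the boundary term $t^2j_0^5(t)=O(t^{-1/2})$ is not the obstruction. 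The obstructions are (i) that $\int_0^\infty t^2 j_0'(t)j_0^4(t)\,\mathd t$ is only conditionally convergent, so the integration by parts needs an oscillatory-integral argument, and (ii) that differentiating \eqref{eq:pn:besselj} under the integral sign at $x=1$ produces an integrand that is only $O(t^{-1/2})$, so dominated convergence does not apply and one must establish uniform convergence of the differentiated tail (e.g.\ by van der Corput or an $e^{-\epsilon t}$ regularisation), or instead derive $p_4'(0;1)=\tfrac35 p_4(0;1)$ from the explicit hypergeometric form \eqref{eq:p4:2d}. Until one of these is carried out, the whole weight of the theorem rests on an identity proved only formally; supply that argument and your proof becomes a complete, and arguably more self-contained, alternative to the paper's (which itself defers its final evaluation to an ``in principle'' Chowla--Selberg computation).
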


\begin{proof}
  As noted after Corollary~\ref{cor:W:res}, we have
  \begin{equation*}
    \operatorname{Res}_{s = - 4} W_5 (1 ; s) = p_4 (1 ; 1) .
  \end{equation*}
  On the other hand, applying the dimensional recursion \eqref{eq:W5:rec:dim}
  for $W_5 (\nu ; s)$ with $\nu = 1$ and using the values $W_5 (0 ; 0) = 1$
  and $W_5 (0 ; 2) = 5$, we obtain
  \begin{equation*}
    \operatorname{Res}_{s = - 4} W_5 (1 ; s) = \frac{7}{6} - \frac{25}{32} r_{5, 0} +
     \frac{35}{24} W_5' (0 ; 0) - \frac{7}{24} W_5' (0 ; 2),
  \end{equation*}
  since $W_5 (0 ; s)$ has simple poles only \cite[Example~2.5]{bsw-rw2}. As
  noted in \cite[(6.2)]{bswz-densities}, it is a consequence of the
  functional equation \eqref{eq:W5:rec} that
  \begin{equation*}
    225 r_{5, 1} = 26 r_{5, 0} - 16 - 20 W_5' (0 ; 0) + 4 W_5' (0 ; 2) .
  \end{equation*}
  Combining the last three equations, we arrive at
  \begin{equation*}
    p_4 (1 ; 1) = \frac{107}{96} r_{5, 0} - \frac{525}{32} r_{5, 1} .
  \end{equation*}
  Equation \eqref{eq:r51} is therefore equivalent to
  \begin{equation}
    p_4 ( 1 ; 1) = \frac{1}{6} r_{5, 0} + \frac{105}{16 \pi^4}  \frac{1}{r_{5,
    0}} . \label{eq:p4:11}
  \end{equation}
  The equality \eqref{eq:p4:11} can now be deduced (at least in principle)
  from the hypergeometric formula for $p_4 ( 1 ; x)$, made explicit in
  Example~\ref{eg:p41}, the modular parametrization of $p_4 ( 0 ; x)$ as well
  as the Chowla--Selberg formula \cite{sc67}.
\end{proof}

In conclusion, we know that $p_5 (0 ; x)$ has a Taylor expansion
\eqref{eq:p5:2d:0} at $x = 0$, which converges and gives its values in the
interval $[0, 1]$. Moreover, we have a recursive description of the Taylor
coefficients and know that they are all $\mathbb{Q}$-linear combinations of
$r_{5, 0}$ in \eqref{eq:r50} and $1 / (\pi^4 r_{5, 0})$. All of these
statements carry over to the 5-step densities $p_5 (\nu ; x)$ in any even
dimension. Since the details are unwieldy, we only sketch why this is so.

Recall that the moments $W_5 (\nu ; s)$ satisfy the recursive relations
\eqref{eq:W5:rec} and \eqref{eq:W5:rec:dim}. Indeed, there is a third relation
which connects $W_5 (\nu ; s)$, $W_5 (\nu ; s + 2)$, $W_5 (\nu + 1 ; s)$, $W_5
(\nu + 1 ; s + 2)$. As in the proof of Theorem~\ref{thm:p4:rec}, the Mellin
transform translates these three recursive relations into (complicated)
differential relations for the densities $p_5 (\nu ; x)$. Assisted, once more,
by Koutschan's package \texttt{Holo\-nomic\-Functions}
\cite{koutschan-phd}, we compute a Gr\"obner basis for the ideal that
these three differential relations generate. From there, we find that there
exists, in analogy with Theorem~\ref{thm:p4:rec}, a relation
\begin{equation*}
  x^2 p_5 (\nu + 1 ; x) = A p_5 (\nu ; x) + B p_5' (\nu ; x) + C p_5'' (\nu ;
   x) + D p_5''' (\nu ; x),
\end{equation*}
with $A, B, C, D$ polynomials of degrees $12, 13, 14, 15$ in $x$ (with
coefficients that are rational functions in $\nu$).

We therefore conclude
inductively that, for integers $\nu$, the density $p_5 (\nu ; x)$ has a Taylor
expansion \eqref{eq:p5:2d:0} at $x = 0$ whose Taylor coefficients lie in the
$\mathbb{Q}$-span of $r_{5, 0}$ in \eqref{eq:r50} and $1 / (\pi^4 r_{5, 0})$.
It remains an open challenge, including in the planar case, to obtain a more
explicit description of $p_5 (\nu ; x)$.

\section{Conclusion}\label{sec:conc}

We have shown that quite delicate results are possible for densities and moments of walks in arbitrary dimensions, especially for two, three and four steps. We find it interesting that induction between dimensions provided methods to show Theorem~\ref{thm:r51}, a result in the plane that we previously could not establish \cite{bswz-densities}. We also should emphasize the crucial role played by computer experimentation and by computer algebra. One stumbling block is that currently \emph{Mathematica}, and to a lesser degree \emph{Maple}, struggle with computing various of the Bessel integrals to more than a few digits --- thus requiring considerable extra computational effort or ingenuity.

We leave some open questions:
\begin{itemize}
  \item The even moments $W_n ( 0 ; 2 k)$ associated to a random walk in two
  dimensions have combinatorial significance. They count {\emph{abelian
  squares}} \cite{richmond-absq09} of length $2 k$ over an alphabet with $n$
  letters (i.e., strings $x x'$ of length $2 k$ from an alphabet with $n$
  letters such that $x'$ is a permutation of $x$). As observed in
  Example~\ref{eg:W:values4d}, the even moments $W_n ( 1 ; 2 k)$ are positive
  integers as well and we have expressed them in terms of powers of the
  Narayana triangular matrix, whose entries count certain lattice paths. Does
  that give rise to an interpretation of the even four-dimensional moments
  themselves counting similar combinatorially interesting objects?

  \item As discussed in Example~\ref{eg:domb}, in the case $\nu = 0$, the
  moments $W_4 ( \nu ; 2 k)$ are the Domb numbers, for which a clean
  hypergeometric generating function is known. Referring to
  Example~\ref{eg:W4:ogf}, we wonder if it is possible to give a compact
  explicit hypergeometric expression for the generating function of the even
  moments $W_4 (\nu ; 2 k)$, valid for all even dimensions, as we did in
  Theorem~\ref{thm:W3:gf} for three-step walks.

  \item Verrill has exhibited \cite{verrill} an explicit recursion in $k$
  of the even moments $W_n ( 0 ; 2 k)$ in the plane. Combined with a result of
  Djakov and Mityagin \cite{dm1}, proved more directly and
  combinatorially by Zagier \cite[Appendix~A]{bswz-densities}, these
  recursions yielded insight into the general structure of the densities $p_n
  (0 ; x)$. For instance, as shown in \cite[Theorem~2.4]{bswz-densities}, it
  follows that these densities are real analytic except at $0$ and the
  positive integers $n, n - 2, n - 4, \ldots$ It would be interesting to
  obtain similar results for any dimension.

  \item By exhibiting recursions relating different dimensions, we have shown
  that the odd moments of the distances after three and four random steps in
  any dimension can all be expressed in terms of the constants arising in the
  planar case. Is it possible to evaluate these odd moments in a closed
  (hypergeometric) form which reflects this observation?

  \item In the plane, various other fragmentary modular results are
  (conjecturally) known for five and six step walks, see \cite[(6.11),
  (6.12)]{bswz-densities} for representations of $W'_5 (0 ; 0)$ and $W'_6 (0
  ; 0)$, conjectured by Rodriguez-Villegas \cite{banff-mahler}, as well
  as a discussion of their relation to {\emph{Mahler measures}}. Are more
  comprehensive results possible?
\end{itemize}

\begin{acknowledgements}
The third author would like to thank J.~M. Borwein for his invitation to visit
the CARMA center in September 2014; this research was initiated at this
occasion. Thanks are due to Ghislain McKay and Corey Sinnamon who explored the
even moments during visiting student fellowships at CARMA in early 2015.
\end{acknowledgements}



\bigskip

\noindent Jonathan M. Borwein\\
\textsc{CARMA, University of Newcastle, Australia}\\
\texttt{jonathan.borwein@newcastle.edu.au}

\medskip

\noindent Armin Straub\\
\textsc{Department of Mathematics, University of Illinois at Urbana-Champaign, USA}\\
\texttt{astraub@illinois.edu}

\medskip

\noindent Christophe Vignat\\
\textsc{Department of Mathematics, Tulane University, USA}\\
\textsc{LSS, Supelec, Universit\'{e} Paris Sud, France}\\
\texttt{cvignat@tulane.edu}

\end{document}